\documentclass[11pt]{article}
\usepackage[top=2cm, bottom=2cm, left=2.5cm, right=2.5cm] {geometry}
\usepackage{comment}
\usepackage{amsmath,amssymb,theorem,color}
\numberwithin{equation}{section} 
\usepackage{amssymb}
\usepackage{color}
\usepackage{amsmath}
\usepackage{graphicx}
\usepackage{amsfonts}%
\setcounter{MaxMatrixCols}{30}

\newcommand{\R}{\ensuremath{\mathbb{R}}}

\newcommand{\N}{\ensuremath{\mathbb{N}}}
\newcommand{\eps}{\varepsilon}

\newcommand{\cC}{\mathcal{C}}
\newcommand{\cF}{\mathcal{F}}
\newcommand{\bP}{\mathbb{P}}

\newcommand{\mP}{\mathbb{P}}

\newcommand{\E}{\mathbb{E}}

\newcommand{\cD}{\mathcal{D}}

\newcommand{\cB}{\mathcal{B}}

\newcommand{\ltn}{\ensuremath{\left| \! \left| \! \left|}}
\newcommand{\rtn}{\ensuremath{\right| \! \right| \! \right|}}

\newtheorem{theorem}{Theorem}[section]
{ \theorembodyfont{\normalfont} 
	\newtheorem{example}[theorem]{Example}
	\newtheorem{remark}[theorem]{Remark}
}

\newtheorem{lemma}[theorem]{Lemma}
\newtheorem{corollary}[theorem]{Corollary}
\newtheorem{proposition}[theorem]{Proposition}


\newcounter{enumctr}

\usepackage[displaymath, mathlines]{lineno}
\begin{document}
\title{Asymptotic stability of controlled differential equations. \\Part I: Young integrals}

\author{Luu Hoang Duc \thanks{Luu H. Duc is with
		Max-Planck-Institute for Mathematics in the Sciences, Leipzig, Germany,
		\& Institute of Mathematics, Viet Nam Academy of Science and Technology	{\tt\small duc.luu@mis.mpg.de, lhduc@math.ac.vn}
	}, $\;$ Phan Thanh Hong \thanks{Phan T. Hong is with
	Thang Long University, Hanoi, Vietnam
	{\tt\small hongpt@thanglong.edu.vn}}
}
\date{}
\maketitle

\begin{abstract}
We provide a unified analytic approach to study stationary states of controlled differential equations driven by rough paths, using the framework of random dynamical systems and random attractors. Part I deals with driving paths of finite $p$-variations with $1 \leq p <2$ so that the integrals are interpreted in the Young sense. Our method helps to generalize recent results \cite{GAKLBSch2010}, \cite{ducGANSch18}, \cite{duchongcong18} on the existence of the global pullback attractors for the generated random dynamical systems. We also prove sufficient conditions for the attractor to be a singleton, thus the pathwise convergence is in both pullback and forward senses.
\end{abstract}

{\bf Keywords:}
stochastic differential equations (SDE), controlled differential equations, Young integrals, exponential stability, random dynamical systems, random attractors.


\section{Introduction}
This paper studies the asymptotic behavior of the controlled differential equation
\begin{equation}\label{fSDE0}
dy_t = [Ay_t + f(y_t)]dt + g(y_t)d x_t,\quad t\in \R_+, y(0)=y_0 \in \R^d, 
\end{equation}
where $A \in \R^{d\times d}$, $f: \R^d \to \R^d$ and $g: \R^d \to \R^{d\times m}$ are globally Lipschitz continuous functions, and the driving path $x$ is in the space $\cC^{p{\rm - var}}(\R, \R^m)$ of continuous paths with finite $p$ - variation norm, for some $p \geq 1$. It is well known that such equation can be solved using T. Lyons' theory of rough paths (see \cite{lyons98}, \cite{lyonsetal07} and also \cite{friz}), hence it appears as the path-wise approach to solve the stochastic differential equation 
\begin{equation}\label{stochYDE}
dy_t = [Ay_t + f(y_t)]dt +g(y_t)d Z_t
\end{equation}
where $Z_t$ is a stationary stochastic process $Z_t$ with almost sure all realizations of $\nu$ - H\"older continuous for some $\nu \in (0,1)$ (e.g. fractional Brownian motions \cite{mandelbrot} with Hurst indices $H \in (0,1)$). 

The first part considers the simple case $1 \leq p <2$ so that system \eqref{fSDE0} is understood in the integral form
\begin{equation}\label{YDEintegral}
y_t = y_0 + \int_0^t [Ay_s + f(y_s)]ds + \int_0^t g(y_s)dx_s, \quad \forall t\geq 0, 
\end{equation}
where the second integral is understood in the Young sense \cite{young}. The existence and uniqueness theorem for Young differential equations is proved in many versions, e.g. \cite{lyons94}, \cite{lyons98}, \cite{zahle}, \cite{nualart01}.%

Our aim is to investigate the role of the driving noise in the longterm behavior of system \eqref{stochYDE}. In fact, this question is studied in a probabilistic approach in \cite{hairer03}, \cite{hairer07}, \cite{hairer11}, \cite{hairer13}, in which they prove the ergodicity of system \eqref{stochYDE}, namely under the general dissipativity condition for the drift coefficient function and some additional regularity conditions, there exists a unique adapted stationary solution for \eqref{stochYDE} in the sense that the generated {\it stochastic dynamical system} over a stationary noise process has a unique invariant probability measure \cite{hairer07}. Moreover, the convergence is of probability type, i.e. other probability measures converge to the unique invariant measure in the total variation norm. 

In this paper, we however follow a simpler analytic approach, namely we impose assumptions for the drift coefficient so that there exists a unique equilibrium for the deterministic system $\dot{\mu} = A\mu + f(\mu)$ which is asymptotically stable; and then raise the questions on the asymptotic dynamics of the perturbed system, in particular the existence of stationary states and their asymptotic (stochastic) stability \cite{khasminskii} with respect to almost sure convergence.\\
These questions could be studied in the framework of random dynamical systems \cite{arnold}. Specifically, results in \cite{GAKLBSch2010} and recently in \cite{BRSch17}, \cite{duchongcong18} reveal that the stochastic Young system \eqref{stochYDE} generates a random dynamical system, hence asymptotic structures like random attractors are well-understood. In this scenarios, system \eqref{stochYDE} has no deterministic equilibrium but is expected to possess a random attractor, although little is known on the inside structure of the attractor and much less on whether or not the attractor is a (random) singleton.

We remind the reader of a well-known technique in \cite{Sus78}, \cite{ImkSchm01}, \cite{KelSchm98} to generate RDS and to study random attractors of system \eqref{fSDE0} by a conjugacy transformation $y_t = \psi_g(\eta_t,z_t)$, where the semigroup $\psi_g$ generated by the equation
$\dot{u} = g(u)$ and $\eta$ is the unique stationary solution of the Langevin equation $d\eta = -\eta dt + dZ_t$. The transformed system
\begin{equation}\label{conju}
\dot{z}_t = \Big(\frac{\partial \psi_g}{\partial u}\Big)^{-1}(\eta_t,z_t) \Big[ A\psi_g(\eta_t,z_t) + f(\psi_g(\eta_t,z_t)) + \eta C\psi_g(\eta_t,z_t)  \Big]
\end{equation}
can then be solved in the pathwise sense and the existence of random attractor for \eqref{conju} is equivalent to the existence of random attractor for the original system.
Unfortunately, this conjugacy method only works in some special cases, particularly if $g(\cdot)$ is the identity matrix, or more general if $g(y) = Cy$ for some matrix $C$ that commutes with $A$ (see further details in Remark \ref{linearrem}). For more general cases, the reader is refered to \cite{ducGANSch18}, \cite{duchongcong18} and the references therein for recent methods in studying the asymptotic behavior of Young differential equations . 

Notice that the existence of a random attractor for the generated random dynamical system was studied in \cite{GAKLBSch2010} and \cite{ducGANSch18} for Young differential equations using an additional construction of stopping times based on fractional Brownian motions, but the method only works under a small noise in the sense that its H\"older seminorm is integrable and can be controlled to be small. In contrast, our method uses the semigroup technique and the crucial Lemma \ref{ytest} to estimate the solution norm under a regular discretization scheme. Thanks to Theorem \ref{YDE} and its corollaries, the supremum and $p-$variation norms of the solution of \eqref{fSDE0} on each interval are estimated and depend only on the left end-point of the interval. This makes it possible to apply the discrete Gronwall lemma for the obtained recurrence relation of the solution norms at the end-points, and finally yields a stability criterion in Theorem \ref{attractor}. As such, the method can be applied for a general source of noises, and the stability criterion matches the classical one for ordinary differential equations when the effect of driving noise is cleared. 
Moreover, it could also be extended to rough differential equations using rough path theory, provided that the driving path is of a little higher regularity (H\"older continuity is required) so that the rough integral is understood in the sense of Gubinelli \cite{gubinelli} for controlled rough paths (see details in Part II \cite{ducRDE}). 
 
The paper is organized as follows. Section 2 is devoted to present preliminaries and main results of the paper, where the norm estimates of the solution of \eqref{fSDE0} is then presented in Subsection 2.1. In Subsection 3.1, we introduce the generation of random dynamical system from the equation \eqref{stochYDE}. Using Lemma \ref{ytest}, we prove the existence of a global random pullback attractor in Theorem \ref{attractor}. Finally in Subsection \ref{singletonsub}, we prove that the attractor is both a pullback and forward singleton attractor if $g$ is a linear map in Theorem \ref{linear}, or if $g \in C^2_b$ for small enough Lipschitz constant $C_g$ in Theorem \ref{gbounded}. As an illustration, we analyze in Example \ref{ex1} the asymptotic stability for the inverted pendulum with stochastic excitation.

\section{Preliminaries and main results}
Let us first briefly make a survey on Young integrals. Denote by $\cC([a,b],\R^r)$, for $r\geq 1$, the space of all continuous paths $x:\;[a,b] \to \R^r$ equipped with supremum norm $\|x\|_{\infty,[a,b]}=\sup_{t\in [a,b]} \|x_t\|$, where $\|\cdot\|$ is the Euclidean norm of a vector in $\R^r$. 
For $p\geq 1$ and $[a,b] \subset \R$, denote by $\cC^{p{\rm-var}}([a,b],\R^r)$ the space of all continuous paths $x \in \cC([a,b],\R^r)$ which is of finite $p-$variation, i.e. 
$\ltn x\rtn_{p{\rm-var},[a,b]} :=\left(\sup_{\Pi(a,b)}\sum_{i=1}^n \|x_{t_{i+1}}-x_{t_i}\|^p\right)^{1/p} < \infty$ where the supremum is taken over the whole class $\Pi(a,b)$ of finite partitions $\Pi=\{ a=t_0<t_1<\cdots < t_n=b \}$ of $[a,b]$ (see e.g. \cite{friz}). Then $\cC^{p{\rm-var}}([a,b],\R^r)$, equipped with the $p-$var norm $	\|x\|_{p{\rm -var},[a,b]}:= \|x_a\|+\ltn x\rtn_{p{\rm -var},[a,b]}$, is a nonseparable Banach space \cite[Theorem 5.25, p.\ 92]{friz}. Also for each $0<\alpha<1$, denote by $\cC^{\alpha\rm{-Hol}}([a,b],\R^r)$ the space of H\"older continuous paths with exponent $\alpha$ on $[a,b]$, and equip it with the norm $\|x\|_{\alpha{\rm -Hol},[a,b]}: = \|x_a\| + \sup_{a\leq s<t\leq b}\frac{\|x_t-x_s\|}{(t-s)^\alpha}$.

We recall here a result from \cite[Lemma\ 2.1]{congduchong17}.
\begin{lemma}\label{additive}
Let $x\in \cC^{p{\rm -var}}([a,b],\R^d)$, $p\geq 1$. If $a = a_1<a_2<\cdots < a_k = b$, then 
\[
\sum_{i=1}^{k-1}\ltn x\rtn^p_{p{\rm -var},[a_i,a_{i+1}]}\leq \ltn x\rtn^p_{p-{\rm var},[a_1,a_k]}\leq (k-1)^{p-1}\sum_{i=1}^{k-1}\ltn x\rtn^p_{p{\rm -var},[a_i,a_{i+1}]}.
\] 
\end{lemma}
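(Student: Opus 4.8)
The plan is to derive both inequalities straight from the definition of $\ltn x\rtn_{p{\rm -var}}$ as a supremum of partition sums $\sum_i\|x_{t_{i+1}}-x_{t_i}\|^p$, so that everything reduces to elementary manipulations of such sums. For the left (super-additivity) inequality I would fix, for each $i$, an \emph{arbitrary} finite partition $\Pi_i$ of the subinterval $[a_i,a_{i+1}]$. Because consecutive $\Pi_i$ share the endpoint $a_{i+1}$, their union $\Pi:=\bigcup_{i=1}^{k-1}\Pi_i$ is a single finite partition of $[a_1,a_k]$, and its partition sum splits exactly as the sum of the partition sums of the $\Pi_i$. Since $\Pi$ is just one admissible partition of $[a_1,a_k]$, this quantity is bounded above by $\ltn x\rtn^p_{p{\rm -var},[a_1,a_k]}$. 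As the bound holds for every choice of the $\Pi_i$ while the right-hand side does not depend on them, I take the supremum over each $\Pi_i$ in turn (each step is legitimate because the bound is uniform) to reach $\sum_{i=1}^{k-1}\ltn x\rtn^p_{p{\rm -var},[a_i,a_{i+1}]}\le \ltn x\rtn^p_{p{\rm -var},[a_1,a_k]}$.

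For the right inequality I would take an arbitrary partition $\Pi=\{a_1=t_0<\cdots<t_n=a_k\}$ of $[a_1,a_k]$ and refine it by inserting the division points $a_2,\ldots,a_{k-1}$, obtaining $\tilde\Pi:=\Pi\cup\{a_1,\ldots,a_k\}$. An interval $[t_j,t_{j+1}]$ of $\Pi$ that contains $m_j$ of these inserted points in its interior is split into $m_j+1$ consecutive pieces; combining the triangle inequality with the power-mean (Jensen) inequality $\big(\sum_{l=1}^N c_l\big)^p\le N^{p-1}\sum_{l=1}^N c_l^p$ gives $\|x_{t_{j+1}}-x_{t_j}\|^p\le (m_j+1)^{p-1}\times(\text{sum of }p\text{-th powers over the pieces})$. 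Since only $k-2$ points are inserted in total, $m_j+1\le k-1$ for every $j$, so summing over $j$ shows the $\Pi$-sum is at most $(k-1)^{p-1}$ times the $\tilde\Pi$-sum. Finally $\tilde\Pi$ contains all the $a_i$, hence restricts to a partition of each $[a_i,a_{i+1}]$ and its sum splits as in the first part, bounded by $\sum_{i=1}^{k-1}\ltn x\rtn^p_{p{\rm -var},[a_i,a_{i+1}]}$. Taking the supremum over $\Pi$ then yields the claimed upper bound.

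The main conceptual point to get right is the second inequality: unlike the case $p=1$, for $p>1$ refining a partition need \emph{not} increase the $p$-variation sum, so one cannot simply invoke monotonicity under refinement, and the factor $(k-1)^{p-1}$ is precisely the unavoidable price charged by the power-mean inequality when a single coarse increment is estimated through its at most $k-1$ sub-increments. The only bookkeeping to watch is that the total number of inserted points is $k-2$, which caps $m_j+1$ at $k-1$ uniformly in $j$; everything else is routine.
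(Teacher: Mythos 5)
Your proof is correct and complete. The paper does not prove this lemma itself --- it simply recalls it as Lemma~2.1 of the cited reference \cite{congduchong17} --- and your argument (superadditivity via concatenation of partitions for the lower bound; insertion of the points $a_2,\dots,a_{k-1}$ followed by the power-mean inequality $\bigl(\sum_{l=1}^{N}c_l\bigr)^p\le N^{p-1}\sum_{l=1}^{N}c_l^p$ for the upper bound) is precisely the standard proof given there, including the correct observation that for $p>1$ one cannot appeal to monotonicity under refinement and must pay the factor $(k-1)^{p-1}$.
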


Now, for $y\in \cC^{q{\rm-var}}([a,b],\R^{d\times m})$ and $x\in \cC^{p{\rm -var}}([a,b],\R^m)$ with  $\frac{1}{p}+\frac{1}{q}  > 1$, the Young integral $\int_a^b y_t dx_t$ can be defined as  $
\int_a^b y_s dx_s:= \lim \limits_{|\Pi| \to 0} \sum_{[u,v] \in \Pi} y_u(x_v-x_u)$, where the limit is taken over all the finite partitions $\Pi$ of $[a,b]$ with $|\Pi| := \displaystyle\max_{[u,v]\in \Pi} |v-u|$ (see \cite[p.\ 264--265]{young}). This integral satisfies the additive property and the so-called {\it Young-Loeve estimate} \cite[Theorem 6.8, p.\ 116]{friz}
\begin{eqnarray}\label{YL0}
\Big\|\int_s^t y_u dx_u-y_s[x_t-x_s]\Big\| \leq (1-2^{1-\frac{1}{p}-\frac{1}{q}})^{-1} \ltn y\rtn_{q{\rm -var},[s,t]} \ltn x\rtn_{p{\rm -var},[s,t]},  \;\forall [s,t]\subset [a,b],
\end{eqnarray}
From now on, we only consider $q = p$ for convenience. We impose the following assumptions on the coefficients $A,f$ and $g$ and the driving path $x$.

{\bf Assumptions}

(${\textbf H}_1$) $A \in \R^{d\times d}$ is a matrix which has all eigenvalues of negative real parts;

(${\textbf H}_2$) $f: \R^d \to \R^d$ and $g: \R^d \to \R^{d\times m},$ are globally Lipschitz continuous functions. In addition, $g \in C^1$ such that $D_g$ is also globally Lipschitz continuous. Denote by $C_f,C_g$ the Lipschitz constants of $f$ and $g$ respectively;

(${\textbf H}_3$) for a given $p \in (1,2)$, $x$ belongs to the space $\cC^{p{\rm - var}}(\R, \R^m)$ of all continuous paths which is of finite $p-$variation on any interval $[a,b]$. In particular, $x$ is a realization of a stationary stochastic process $Z_t(\omega)$ with almost sure all realizations in the space $C^{p{\rm - var}}(\R, \R^m)$, such that 
\begin{equation}\label{Gamma}
\Gamma(p):=\Big(E \ltn Z \rtn^p_{p{\rm -var},[-1,1]}\Big)^{\frac{1}{p}} < \infty. 
\end{equation}
For instance, $Z$ could be an $m-$dimensional fractional Brownian motion $B^H$ \cite{mandelbrot} with Hurst exponent $H \in (\frac{1}{2},1)$, i.e. a family of centered Gaussian processes $B^H = \{B^H_t\}$, $t\in \R$ or $\R_+$ with continuous sample paths and the covariance function
\[
R_H(s,t) = \tfrac{1}{2}(t^{2H} + s^{2H} - |t-s|^{2H}),\quad \forall t,s \in \R.
\] 
Assumption (${\textbf H}_1$) ensures that the semigroup $\Phi(t) =e^{At}, t\in \R$ generated by $A$ satisfies the following properties.
\begin{proposition}\label{A}
	Assume that $A$ has all eigenvalues of negative real parts. Then there exist constants $C_A\geq 1,\lambda_A >0$ such that the generated semigroup $\Phi(t) = e^{At}$ satisfies 
	\begin{eqnarray}
	\|\Phi\|_{\infty,[a,b]} &\leq& C_Ae^{-\lambda_A a}, \label{estphi1}\\
	\ltn\Phi\rtn_{p{\rm -var},[a,b]} &\leq& |A|C_A e^{-\lambda_A a}(b-a),\quad \forall\;  0\leq a<b, \label{estphi2}
	\end{eqnarray}
	in which $|A| : =\displaystyle\sup_{\|x\|=1}\frac{\|Ax\|}{\|x\|}$.
\end{proposition}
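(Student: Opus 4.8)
The only substantial ingredient is the classical exponential decay estimate for the Hurwitz matrix $A$; the two displayed bounds then follow from elementary monotonicity and variation arguments. So I would first record that, since every eigenvalue $\lambda$ of $A$ has $\mathrm{Re}\,\lambda<0$, one may fix any $\lambda_A$ with $0<\lambda_A<\min\{-\mathrm{Re}\,\lambda:\lambda\ \text{an eigenvalue of}\ A\}$. Passing to the Jordan normal form, the entries of $e^{At}$ are linear combinations of terms $t^k e^{\lambda t}$ with $\lambda$ an eigenvalue and $k$ strictly below the size of the corresponding block; since $\lambda_A<-\mathrm{Re}\,\lambda$, each such term is bounded by a constant multiple of $e^{-\lambda_A t}$ for $t\geq0$. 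Summing, there is a constant $C_A\geq1$ (necessarily $\geq1$ since $e^{A\cdot0}=\mathrm{Id}$) with
\begin{equation}\label{expdecay}
\|e^{At}\|\leq C_Ae^{-\lambda_A t},\qquad \forall\,t\geq0.
\end{equation}

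Granting \eqref{expdecay}, the bound \eqref{estphi1} is immediate. Because $t\mapsto e^{-\lambda_A t}$ is decreasing and $a\geq0$, its supremum over $[a,b]$ is attained at the left end-point, so $\|\Phi\|_{\infty,[a,b]}=\sup_{t\in[a,b]}\|e^{At}\|\leq C_Ae^{-\lambda_A a}$.

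For \eqref{estphi2} I would use that $\Phi$ is $C^1$ with $\Phi'(t)=Ae^{At}$, whence $\|\Phi'(t)\|\leq|A|\,\|e^{At}\|\leq|A|C_Ae^{-\lambda_A t}$ by \eqref{expdecay} and the definition of $|A|$. For any partition $\Pi=\{a=t_0<\cdots<t_n=b\}$ one has $\|\Phi(t_{i+1})-\Phi(t_i)\|\leq\int_{t_i}^{t_{i+1}}\|\Phi'(s)\|\,ds$, and since $p\geq1$ the elementary inequality $\sum_i c_i^p\leq(\sum_i c_i)^p$ for $c_i\geq0$ gives $\sum_i\|\Phi(t_{i+1})-\Phi(t_i)\|^p\leq\big(\int_a^b\|\Phi'(s)\|\,ds\big)^p$. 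Taking the supremum over $\Pi$ and a $p$-th root shows $\ltn\Phi\rtn_{p{\rm -var},[a,b]}\leq\int_a^b\|\Phi'(s)\|\,ds$; bounding the integrand and using $e^{-\lambda_A s}\leq e^{-\lambda_A a}$ on $[a,b]$ then yields $\int_a^b\|\Phi'(s)\|\,ds\leq|A|C_Ae^{-\lambda_A a}(b-a)$, which is exactly \eqref{estphi2}.

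The main (and essentially the only) obstacle is the decay estimate \eqref{expdecay}, the standard Lyapunov bound for a matrix whose spectrum lies in the open left half-plane; everything after it is routine. The two minor points to keep track of are the hypothesis $a\geq0$, which is precisely what makes the left end-point maximize $e^{-\lambda_A t}$ in both estimates, and the reduction of the $p$-variation (valid since $p\geq1$) to the total variation of the $C^1$ path $\Phi$.
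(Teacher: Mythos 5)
Your proposal is correct and follows essentially the same route as the paper: the paper cites the classical decay bound $\|e^{At}\|\leq C_Ae^{-\lambda_A t}$ (which you instead derive via the Jordan form) and then bounds increments by $\|\Phi(u)-\Phi(v)\|\leq\int_u^v|A|C_Ae^{-\lambda_A s}\,ds\leq|A|C_Ae^{-\lambda_A a}(v-u)$, exactly as you do. Your explicit reduction of the $p$-variation to the total variation via $\sum_i c_i^p\leq\bigl(\sum_i c_i\bigr)^p$ is a detail the paper leaves implicit, but the argument is the same.
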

\begin{proof}
	The first inequality is due to \cite[Chapter 1, \S3]{Adrianova}. The second one is followed from the mean value theorem
	\begin{eqnarray*}
		\|\Phi(u)-\Phi(v)\| =\left\|\int_u^v A\Phi(s)ds\right\| 
		\leq \int_u^v |A| C_A e^{-\lambda_A s}ds  \leq |A|C_A e^{-\lambda_A a}(v-u),
	\end{eqnarray*}
	for any $u<v$ in $[a,b]$ where $e^{-\lambda_A \cdot}$ is a decreasing function. 	
\end{proof}

Our main results (Theorem \ref{attractor}, Theorem \ref{linear} and Theorem \ref{gbounded}) could be summarized as follows.

\begin{theorem}\label{mainthm}
	Assume that the system \eqref{stochYDE} satisfies the assumptions ${\textbf H}_1-{\textbf H}_3$, and further that  $\lambda_A > C_fC_A$, where $\lambda_A$ and $C_A$ are given from \eqref{estphi1},\eqref{estphi2}. If
	\begin{equation}\label{criterion}
	\lambda_A - C_A C_f> C_A(1+|A|) e^{\lambda_A+2(|A|+C_f)} \Big\{\Big[2(K+1)C_g\Gamma(p)\Big]^p + \Big[2(K+1)C_g\Gamma(p)\Big]\Big\},
	\end{equation}
	where $\Gamma(p)$ is defined in \eqref{Gamma} and $K$ in \eqref{constK}, then the generated random dynamical system $\varphi$ of \eqref{stochYDE} possesses a pullback attractor $\mathcal{A}(x)$. Moreover, in case $g(y) = Cy +g(0)$ is a linear map satisfying \eqref{criterion} or in case $g \in C^2_b$ with the Lipschitz constant $C_g$ small enough, this attractor is a singleton, i.e. $\mathcal{A}(x) = \{a(x)\}$ a.s., thus the pathwise convergence is in both the pullback and forward directions.	
\end{theorem}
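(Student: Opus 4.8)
The plan is to prove the theorem in three stages, matching its three constituent results. The foundational step, which I would take first, is to record that the stochastic Young equation \eqref{stochYDE} generates a random dynamical system $\varphi$ over the metric dynamical system carrying the stationary noise; this follows the generation results of \cite{GAKLBSch2010}, \cite{duchongcong18}, where the cocycle property is inherited from pathwise existence and uniqueness for the Young equation. With $\varphi$ in hand, the existence part reduces to producing a tempered bounded random absorbing set: since closed bounded subsets of $\R^d$ are compact, such a set is enough to invoke the standard existence theorem for random pullback attractors, and the attractor is then the $\Omega$-limit of the absorbing set.

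The heart of the existence proof is a quantitative norm estimate on unit intervals. Writing the solution in the variation-of-constants form
\[
y_t = \Phi(t)y_0 + \int_0^t \Phi(t-s)f(y_s)\,ds + \int_0^t \Phi(t-s)g(y_s)\,dx_s,
\]
I would bound the last term by the Young--Loeve estimate \eqref{YL0}, using the exponential decay and the $p$-variation bound for $\Phi$ supplied by Proposition \ref{A}. The subtlety is that the $p$-variation of $y$ on $[n,n+1]$ re-enters its own bound through $g(y)$; this self-referential estimate is exactly what Lemma \ref{ytest} is built to close, yielding control of both $\|y\|_{\infty,[n,n+1]}$ and $\ltn y\rtn_{p{\rm -var},[n,n+1]}$ in terms of $\|y_n\|$ and the local noise intensity $\ltn x\rtn_{p{\rm -var},[n,n+1]}$ alone. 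Reading off the right endpoint then gives a recurrence $\|y_{n+1}\| \le \kappa_n\|y_n\| + b_n$, where the contraction factor $\kappa_n$ comes from the semigroup decay $e^{-\lambda_A}$ tempered by the Lipschitz growth $C_f$, and $b_n$ is governed by the stationary, integrable noise through $\Gamma(p)$. Condition \eqref{criterion} is precisely the requirement that, after accounting for the noise, $\kappa_n$ stays bounded above by a factor strictly less than one; the discrete Gronwall inequality applied to this recurrence, run backward in time for the pullback, then delivers the desired tempered absorbing set.

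For the singleton claim I would study the difference $\delta_t = y_t^{(1)} - y_t^{(2)}$ of two solutions started from distinct data but driven by the same path. In the linear case $g(y)=Cy+g(0)$ the noise term of the $\delta$-equation is $C\delta$, so $\delta$ obeys a homogeneous Young equation; rerunning the unit-interval estimate with no additive term $b_n$ produces a pure contraction $\|\delta_{n+1}\| \le \kappa_n\|\delta_n\|$, and \eqref{criterion} forces the product of the $\kappa_n$ to vanish, so all trajectories coalesce and the attractor collapses to a point. Since this contraction of differences holds in forward time as well, the pullback singleton is automatically a forward singleton. In the nonlinear $C^2_b$ case the term $g(y^{(1)})-g(y^{(2)})$ is merely Lipschitz with constant $C_g$, so the $\delta$-estimate carries an extra factor proportional to $C_g$; taking $C_g$ small enough keeps the contraction factor below one and the same argument applies.

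The main obstacle throughout is the nonlinear, self-referential character of the Young-integral bound: unlike the additive-noise setting, the $p$-variation norm of the solution appears on both sides of the estimate, and the Young--Loeve constant multiplies a \emph{product} of $p$-variation norms rather than a single one. Preventing these terms from blowing up over a unit interval, and then tracking the constants faithfully enough that the end-point recurrence genuinely contracts, is where essentially all the work sits; the intricate shape of \eqref{criterion}, with its exponential prefactor and the mixed dependence on $C_g\Gamma(p)$ through both a $p$-th power and a linear term, is the direct fingerprint of closing this loop via Lemma \ref{ytest} and the discrete Gronwall lemma.
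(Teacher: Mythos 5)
Your outline follows the same architecture as the paper (variation of constants, Lemma \ref{ytest} to close the self-referential $p$-variation estimate, discrete Gronwall, then contraction of differences for the singleton), but there are three places where the sketch asserts what actually needs to be proved. First, your claim that condition \eqref{criterion} forces the one-step contraction factor $\kappa_n$ to stay ``bounded above by a factor strictly less than one'' is not correct as stated: the factor produced by the discrete Gronwall lemma on $\Delta_k$ is of the form $1+M_1C_g\,G(\theta_{-k}x,\cdot)$ with $G$ an unbounded function of $\ltn x\rtn_{p{\rm -var},\Delta_k}$, so individual factors routinely exceed $1$. What \eqref{criterion} actually guarantees is that the \emph{Birkhoff ergodic average} of $\log\bigl(1+M_1C_gG(\theta_{-k}x,\cdot)\bigr)$ is strictly smaller than $\lambda=\lambda_A-C_AC_f$, so that the product grows at an exponential rate $\hat G<\lambda$ and is beaten by the weight $e^{-\lambda n}$. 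The ergodic theorem (via the stationarity of the increments and the integrability hypothesis \eqref{Gamma}) is the step that converts the pathwise product into the deterministic criterion, and it is absent from your argument. Second, you assert that the resulting absorbing radius is tempered; in the paper this is Proposition \ref{Glim}, which must show that the infinite series $b(x)$ of noise-dependent products converges a.s., is measurable (the supremum over $\epsilon$ has to be reduced to rational $\epsilon$ by a uniform-convergence argument), and has subexponential growth along $\theta_{-n}x$. Without temperedness the absorbing ball need not belong to the universe $\cD$ and the attractor existence theorem does not apply.

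Third, in the $C^2_b$ case the statement ``the $\delta$-estimate carries an extra factor proportional to $C_g$; taking $C_g$ small enough keeps the contraction factor below one'' conceals the real difficulty. By Lemma \ref{Q}(i) the difference of the diffusion terms is controlled by $C_g\ltn z\rtn_{p{\rm -var}}+C_g'\|z\|_\infty\ltn y^1\rtn_{p{\rm -var}}$, so the per-interval factor is proportional to $C_g\bigl(1+\ltn y^1\rtn_{p{\rm -var},\Delta_k}\bigr)$ times an exponential of $N'_{\Delta_k}$, which again involves $\ltn y^1\rtn_{p{\rm -var},\Delta_k}$. This quantity is an unbounded random variable depending on the reference solution, so ``small $C_g$'' only helps after one establishes a priori moment bounds on the solution along the attractor (Steps 1--2 of the paper's Theorem \ref{gbounded}, which require the stronger moment condition \eqref{newGamma} on the noise), applies the ergodic theorem to the logarithm of the factor, and invokes dominated convergence to send the resulting expectation to zero as $C_g\to 0$. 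As written, your argument for the nonlinear singleton case would not close.
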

For the convenience of the readers, we introduce some notations and constants which are used throughout the paper.
\allowdisplaybreaks
 \begin{eqnarray}
	L &:=& |A|+C_f, \quad L_f := C_AC_f, \quad \lambda := \lambda_A-L_f; \label{L}\\
	K&:=&(1-2^{1-\frac{2}{p}})^{-1}, \alpha := \log (1+ \frac{1}{K+1}); 	\label{constK}\\
	M_0 &:=& \frac{\|f(0)\|}{L}+  \frac{\|g(0)\|}{(K+1)C_g}; \label{M0}\\
	M_1 &:=& KC_A e^{\lambda_A}(1+|A|); \label{M1}\\
	M_2 &:=& \max\left\{C_A\frac{e^{\lambda}-1}{\lambda} , M_1C_g\Big( \frac{1}{L}+  \frac{1}{(K+1)C_g} \Big), M_1C_g \right\} \max \{\|f(0)\|,\|g(0)\|\}; \label{M2}\\
	\hat{G} &:=& C_A e^{\lambda_A}(1+|A|)e^{4L} \Big\{\Big[2(K+1)C_g\Gamma(p)\Big]^p + \Big[2(K+1)C_g\Gamma(p)\Big]\Big\}.\label{Gmu}
\end{eqnarray}
\subsection{Solution estimates}
In this preparatory subsection we are going to estimate several norms of the solution. To do that, the idea is to evaluate the norms of the solution on a number of consecutive small intervals. Here we would like to construct, for any $\gamma >0$ and any given interval $[a,b]$, a sequence of greedy times $\{\tau_k(\gamma)\}_{k \in \N}$ as follows  (see e.g. \cite{cassetal}, \cite{ducGANSch18}, \cite{congduchong17})
\begin{equation}\label{greedytime}
\tau_0 = a, \tau_{k+1}(\gamma) := \inf \{t > \tau_k(\gamma): \ltn x \rtn_{p{\rm -var},[\tau_k(\gamma),t]} = \gamma\} \wedge b.
\end{equation}
Define
\begin{equation}\label{N}
N = N_{\gamma,[a,b]}(x) : = \sup \{k \in \N, \tau_k(\gamma) \leq b\},
\end{equation}
then due to the superadditivity of $\ltn x \rtn^p_{p{\rm -var},[s,t]}$ 
\begin{eqnarray}\label{Nest}
N- 1 &\leq& \sum_{k=0}^{N - 2} \gamma^{-p} \ltn x \rtn^p_{p {\rm -var},[\tau_k,\tau_{k+1}]} \leq \gamma^{-p} \ltn x \rtn^p_{p {\rm -var},[\tau_0,\tau_{N-1}]} \leq \gamma^{-p} \ltn x \rtn^p_{p {\rm -var},[a,b]}, \notag\\
\text{which yields\ }\  N&\leq& 1 + \gamma^{-p} \ltn x \rtn^p_{p {\rm -var},[a,b]}.
\end{eqnarray}
From now on, we fix $p \in (1,2)$ and $\gamma := \frac{1}{2(K+1)C_g}$, and write in short $N_{[a,b]}(x)$ to specify the dependence of $N$ on $x$ and the interval $[a,b]$.

We assume throughout this section that the assumption  (${\textbf H}_2$), (${\textbf H}_3$) are satisfied. The following theorem presents a standard method to estimate the $p-$variation and the supremum norms of the solution of \eqref{fSDE0}, by using the continuous Gronwall lemma and a discretization scheme with the greedy times \eqref{greedytime}. 
\begin{theorem}\label{YDE}
There exists a unique solution to \eqref{fSDE0} for any initial value, whose supremum and $p-$variation norms are estimated as follows
	\begin{eqnarray}
	\|y\|_{\infty,[a,b]} &\leq&  \Big[\|y_a\| + M_0N_{[a,b]}(x)\Big] e^{\alpha N_{[a,b]}(x) +2L (b-a)}, \label{estx} \\
	\| y \|_{p{\rm -var},[a,b]} &\leq& \Big[\|y_a\| +M_0 N_{[a,b]}(x)\Big]e^{\alpha N_{[a,b]}(x)+2L(b-a) }N^{\frac{p-1}{p}}_{[a,b]}(x), \label{estx2}
	\end{eqnarray}
	where $L,\alpha$ and $M_0$ are given by \eqref{L}, \eqref{constK} and \eqref{M0} respectively. 
\end{theorem}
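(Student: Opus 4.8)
The plan is to treat the two assertions separately: existence and uniqueness is classical Young theory, whereas the quantitative bounds \eqref{estx}--\eqref{estx2} are where the greedy-time scheme does the work. For existence and uniqueness I would run a Banach fixed-point argument for the solution map $y \mapsto y_a + \int_a^\cdot[Ay_s+f(y_s)]ds + \int_a^\cdot g(y_s)dx_s$ on $\cC^{p{\rm-var}}([\tau_k,\tau_{k+1}],\R^d)$, where on each greedy interval $\ltn x\rtn_{p{\rm-var}}=\gamma$ is small. The Young--Lo\`eve estimate \eqref{YL0} together with the Lipschitz regularity of $f$ and the $C^1$-regularity of $g$ from (${\textbf H}_2$) makes this map a contraction once $\gamma$ is small enough, and the local solutions are concatenated across the greedy partition (see \cite{lyons94,nualart01}).

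For the bounds I would work on a single greedy interval $I=[\tau_k,\tau_{k+1}]$ and set $h(y):=Ay+f(y)$, which is globally Lipschitz with constant $|A|+C_f=L$ and obeys $\|h(y)\|\le\|f(0)\|+L\|y\|$. For $s<t$ in $I$ the integral equation gives $y_t-y_s=\int_s^t h(y_r)dr+\int_s^t g(y_r)dx_r$, so the drift contributes at most $(\|f(0)\|+L\|y\|_{\infty,I})(t-s)$, while \eqref{YL0} bounds the Young increment by $\|g(y_s)\|\ltn x\rtn_{p{\rm-var},[s,t]}+K\ltn g(y)\rtn_{p{\rm-var},[s,t]}\ltn x\rtn_{p{\rm-var},[s,t]}$, using $\|g(y_s)\|\le\|g(0)\|+C_g\|y\|_{\infty,I}$ and $\ltn g(y)\rtn_{p{\rm-var}}\le C_g\ltn y\rtn_{p{\rm-var}}$. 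Taking the supremum over partitions of $I$ and inserting $\ltn x\rtn_{p{\rm-var},I}\le\gamma$ produces an estimate for $\ltn y\rtn_{p{\rm-var},I}$ in which $\ltn y\rtn_{p{\rm-var},I}$ itself reappears on the right with coefficient $KC_g\gamma=\tfrac{K}{2(K+1)}<\tfrac12$. This is precisely why $\gamma$ was fixed at $\frac{1}{2(K+1)C_g}$: the feedback term can be absorbed into the left-hand side, leaving $\ltn y\rtn_{p{\rm-var},I}$ controlled by $\|y\|_{\infty,I}$ and the additive data recorded in $M_0$.

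Feeding this back into the increment bound on $[\tau_k,t]$, after the $p$-variation term has been eliminated, gives an inequality of the form $\|y_t\|\le\big(1+\tfrac{1}{K+1}\big)\|y_{\tau_k}\|+M_0+L\int_{\tau_k}^t\|y_r\|dr$ up to harmless constant contributions, and the continuous Gronwall lemma then yields $\|y\|_{\infty,I}\le e^{2L\Delta_k}\big[\big(1+\tfrac1{K+1}\big)\|y_{\tau_k}\|+M_0\big]$ with $\Delta_k=\tau_{k+1}-\tau_k$. Since $\alpha=\log\big(1+\tfrac{1}{K+1}\big)$, this is the one-step recursion $\|y_{\tau_{k+1}}\|\le e^{\alpha+2L\Delta_k}\|y_{\tau_k}\|+M_0\,e^{2L\Delta_k}$. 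Iterating over $k=0,\dots,N-1$ by a discrete Gronwall (telescoping) argument and using $\sum_k\Delta_k=b-a$ turns the geometric accumulation of the $e^\alpha$ factors into $e^{\alpha N}$, the drift factors into $e^{2L(b-a)}$, and the additive contributions into $M_0N$, which is \eqref{estx}. Then \eqref{estx2} follows from Lemma \ref{additive} applied to $y$, namely $\ltn y\rtn_{p{\rm-var},[a,b]}\le N^{(p-1)/p}\big(\sum_k\ltn y\rtn^p_{p{\rm-var},[\tau_k,\tau_{k+1}]}\big)^{1/p}$, bounding each greedy-interval $p$-variation by the sup estimate just obtained.

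The step I expect to be the main obstacle is the self-referential $p$-variation estimate: because the integrand $g(y)$ of the Young integral carries the unknown $\ltn y\rtn_{p{\rm-var}}$, the bound cannot be closed directly, and everything hinges on choosing $\gamma$ so that the Young--Lo\`eve feedback coefficient is strictly below one and can be absorbed. The other delicate point is keeping the constants clean through the combined continuous Gronwall (drift) and discrete Gronwall (greedy recursion) so that they collapse exactly into $L$, $\alpha$ and $M_0$.
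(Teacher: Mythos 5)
Your proposal is correct and follows essentially the same route as the paper: greedy times with $\gamma=\frac{1}{2(K+1)C_g}$, the Young--Lo\`eve estimate producing a self-referential $p$-variation bound whose feedback term is absorbed, a continuous Gronwall step on each greedy interval giving the one-step recursion with factors $e^{\alpha}$ and $e^{2L\Delta_k}$, iteration across the $N_{[a,b]}(x)$ intervals for \eqref{estx}, and Lemma \ref{additive} for \eqref{estx2}. The only slip is bookkeeping: the feedback coefficient is $(K+1)C_g\gamma=\tfrac12$ rather than $KC_g\gamma$, since bounding $\|g(y_s)\|$ at the left endpoints of partition subintervals contributes an extra $C_g\ltn y\rtn_{p{\rm-var}}$ term; this does not affect the argument because the choice of $\gamma$ is made exactly so that this larger coefficient equals $\tfrac12$.
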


\begin{proof}
There are similar versions of Theorem \ref{YDE} in \cite[Proposition 1]{lejay} for Young equations or in \cite[Lemma 4.5]{cassetal} and \cite[Theorem 3.1]{riedelScheutzow} for rough differential equations with bounded $g$, thus we will only sketch out the proof here for the benefit of the readers. To prove \eqref{estx}, we use the fact that $\ltn g(y)\rtn_{p{\rm-var},[s,t]}\leq C_g\ltn y\rtn_{p{\rm-var},[s,t]}$ and apply \eqref{YL0} with $K$ in \eqref{constK} to derive
\begin{eqnarray*}
\|y_t - y_s \| &\leq& \int_s^t (L \|y_u\| + \|f(0)\|)du + \ltn x \rtn_{p{\rm -var},[s,t]}\Big(\|g(y_s)\| + K C_g \ltn y \rtn_{p{\rm -var},[s,t]}\Big)\\ 
\text{so that}\quad \ltn y \rtn_{p{\rm -var},[s,t]} &\leq& \int_s^t L \ltn y\rtn_{p{\rm -var},[s,u]} du + (\|f(0)\|+ L \|y_s\|)(t-s) \\
&&+ \ltn x \rtn_{p{\rm -var},[s,t]}\Big(\|g(y_s)\| + (K+1) C_g \ltn y \rtn_{p{\rm -var},[s,t]}\Big).
\end{eqnarray*}
As a result, 
\begin{equation}\label{ypvar}
\ltn y \rtn_{p{\rm -var},[s,t]} \leq \int_s^t 2 L \ltn y\rtn_{p{\rm -var},[s,u]} du + 2(\|f(0)\|+ L \|y_s\|)(t-s) +2 \ltn x\rtn_{p{\rm -var},[s,t]} \|g(y_s)\|
\end{equation}
whenever $(K+1)C_g \ltn x\rtn_{p{\rm -var},[s,t]} \leq \frac{1}{2}$. Applying the continuous Gronwall Lemma \cite[Lemma 6.1, p\ 89]{amann} for $\ltn y \rtn_{p{\rm -var},[s,t]}$ yields
\allowdisplaybreaks
\begin{eqnarray}\label{ypvarest}
\ltn y \rtn_{p{\rm -var},[s,t]} &\leq&  2(\|f(0)\|+ L \|y_s\|)(t-s) + 2 \ltn x\rtn_{p{\rm-var},[s,t]}\|g(y_s)\| \notag\\
&&+ \int_s^t 2L e^{2L (t-u)} \Big[2(\|f(0)\|+ L \|y_s\|)(u-s) + 2 \ltn x\rtn_{p{\rm-var},[s,u]}\|g(y_s)\| \Big] du \notag\\
&\leq& \Big(M_0+ \frac{K+2}{K+1}\|y_s\|\Big)e^{2L(t-s)} - \|y_s\|
\end{eqnarray}
 whenever $\ltn x\rtn_{p{\rm -var},[s,t]} \leq \gamma= \frac{1}{2(K+1)C_g}$. Now construct the sequence of greedy times $\{\tau_k=\tau_k(\gamma)\}_{k \in \N}$ on interval $[a,b]$ as in \eqref{greedytime}, it follows from induction that
 \begin{eqnarray*}
 	&& \|y_{\tau_{k+1}} \| \leq \|y\|_{\infty,[\tau_k,\tau_{k+1}]}\leq \| y \|_{p{\rm -var},[\tau_k,\tau_{k+1}]}
	 \leq \Big(e^\alpha\|y_{\tau_k}\|+M_0 \Big) e^{2L(\tau_{k+1}-\tau_k)} \\
 	&&\leq \Big[ \|y_a\| +M_0(k+1)\Big] e^{\alpha(k+1)+2L (\tau_{k+1}-\tau_0)}, \quad \forall k = 0, \ldots, N_{[a,b]}(x)-1, 
 \end{eqnarray*}
 which proves \eqref{estx} since $\tau_{N_{[a,b]}(x)} = b$. On the other hand, it follows from inequality of $p$-variation seminorm in Lemma \ref{additive} and \eqref{ypvarest} that for all $k = 0, \ldots, N_{[a,b]}(x)-1$, 
  \begin{eqnarray*}
 	\ltn y \rtn_{p{\rm - var},[a,b]} 
  &\leq& N^{\frac{p-1}{p}}_{[a,b]}(x)\sum_{k = 0}^{N_{[a,b]}(x)-1} \Big\{ \|y_{\tau_k}\| \big(e^{\alpha+2L(\tau_{k+1}-\tau_k)} -1\big)+ M_0 e^{2L(\tau_{k+1}-\tau_k)} \Big\}\\
 	&\leq&  N^{\frac{p-1}{p}}_{[a,b]}(x) \Big[\|y_a\| + M_0N_{[a,b]}(x)\Big]e^{\alpha N_{[a,b]}(x)+2L(b-a)} - \|y_a\|
 \end{eqnarray*}
 which proves \eqref{estx2}. 
\end{proof}
By the same arguments, we can prove the following results.
\begin{corollary}\label{colnew}
	If in addition $g$ is bounded by $\|g\|_\infty < \infty$, then 
	\begin{equation}\label{esty2}
	\| y \|_{p{\rm -var},[a,b]} \leq \Big[\|y_a\| + \Big(\frac{\|f(0)\|}{L} \vee 2\|g\|_\infty\Big) (1+  \ltn x\rtn_{p{\rm-var},[a,b]})N_{[a,b]}(x)\Big] e^{2 L (b-a)}N^{\frac{p-1}{p}}_{[a,b]}(x),
	\end{equation}
in which $a\vee b := \max\{a,b\}$.
\end{corollary}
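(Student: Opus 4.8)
The plan is to rerun the proof of Theorem \ref{YDE} line by line, changing only one ingredient: wherever the estimate of the Young integral produces the term $\|g(y_s)\|$, I would replace it by the uniform bound $\|g\|_\infty$. Concretely, in deriving \eqref{ypvar} I keep $\|Ay_u+f(y_u)\|\leq L\|y_u\|+\|f(0)\|$ but bound the Young--Loeve increment of $\int g(y)\,dx$ by $\|g\|_\infty\ltn x\rtn_{p{\rm-var},[s,t]}+KC_g\ltn y\rtn_{p{\rm-var},[s,t]}\ltn x\rtn_{p{\rm-var},[s,t]}$. On intervals with $(K+1)C_g\ltn x\rtn_{p{\rm-var},[s,t]}\leq\tfrac12$ the quadratic term is again absorbed and the continuous Gronwall lemma applies. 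The essential gain over \eqref{ypvarest} is that $\|g\|_\infty$ carries no $\|y_s\|$-dependence, so the coefficient of $\|y_s\|$ in the Gronwall output is exactly $e^{2L(t-s)}-1$, with no extra $e^{\alpha}$ factor. Writing $M:=\frac{\|f(0)\|}{L}\vee 2\|g\|_\infty$ and using $\frac{\|f(0)\|}{L}\leq M$ together with $2\|g\|_\infty\ltn x\rtn_{p{\rm-var},[s,t]}\leq M\ltn x\rtn_{p{\rm-var},[s,t]}$, I expect to reach the clean local bound
\begin{equation*}
\ltn y\rtn_{p{\rm-var},[s,t]}\leq\Big[\|y_s\|+M\big(1+\ltn x\rtn_{p{\rm-var},[s,t]}\big)\Big]e^{2L(t-s)}-\|y_s\|,
\end{equation*}
valid whenever $\ltn x\rtn_{p{\rm-var},[s,t]}\leq\gamma$.

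Next I would insert this into the greedy-time scheme \eqref{greedytime}. Writing $b_k:=\ltn x\rtn_{p{\rm-var},[\tau_k,\tau_{k+1}]}$ and $\delta_k:=\tau_{k+1}-\tau_k$, combining the local bound with $\|y_{\tau_{k+1}}\|\leq\|y_{\tau_k}\|+\ltn y\rtn_{p{\rm-var},[\tau_k,\tau_{k+1}]}$ makes the $\|y_{\tau_k}\|$-terms cancel and yields the recursion $\|y_{\tau_{k+1}}\|\leq\big[\|y_{\tau_k}\|+M(1+b_k)\big]e^{2L\delta_k}$. Iterating and factoring out $e^{2L(\tau_m-a)}$ through $e^{2L(\tau_m-\tau_j)}\leq e^{2L(\tau_m-a)}$ gives the supremum control $\|y_{\tau_m}\|\leq\big[\|y_a\|+M\sum_{j<m}(1+b_j)\big]e^{2L(\tau_m-a)}=:R_m$, crucially \emph{without} the $e^{\alpha N}$ growth of \eqref{estx}; this is precisely where the boundedness of $g$ is used. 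For the $p$-variation I would prove by induction on $m$ the telescoped inequality $\sum_{k=0}^{m-1}\ltn y\rtn_{p{\rm-var},[\tau_k,\tau_{k+1}]}+\|y_a\|\leq R_m$: in the inductive step one enlarges $\|y_{\tau_m}\|$ to $R_m$ inside the local bound (licit since the coefficient $e^{2L\delta_m}-1$ is nonnegative), the added $-R_m$ then cancels, and only the check $e^{2L\delta_m}\leq e^{2L(\tau_{m+1}-a)}$ remains, which holds since $\tau_m\geq a$.

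Finally I would assemble the estimate. Taking $m=N:=N_{[a,b]}(x)$ and $\tau_N=b$, and bounding $\sum_{j=0}^{N-1}(1+b_j)\leq N\big(1+\ltn x\rtn_{p{\rm-var},[a,b]}\big)$ (each $b_j\leq\ltn x\rtn_{p{\rm-var},[a,b]}$ since $[\tau_j,\tau_{j+1}]\subset[a,b]$) yields
\begin{equation*}
\sum_{k=0}^{N-1}\ltn y\rtn_{p{\rm-var},[\tau_k,\tau_{k+1}]}\leq\Big[\|y_a\|+M\big(1+\ltn x\rtn_{p{\rm-var},[a,b]}\big)N\Big]e^{2L(b-a)}-\|y_a\|.
\end{equation*}
I would then invoke Lemma \ref{additive} in the form $\ltn y\rtn^p_{p{\rm-var},[a,b]}\leq N^{p-1}\sum_k\ltn y\rtn^p_{p{\rm-var},[\tau_k,\tau_{k+1}]}$ followed by the elementary $\ell^p\hookrightarrow\ell^1$ inequality $\big(\sum a_k^p\big)^{1/p}\leq\sum a_k$, so that $\ltn y\rtn_{p{\rm-var},[a,b]}\leq N^{(p-1)/p}\sum_k\ltn y\rtn_{p{\rm-var},[\tau_k,\tau_{k+1}]}$, and add back $\|y_a\|$ to recover $\|y\|_{p{\rm-var},[a,b]}$; discarding the negative $\|y_a\|$-contributions via $N^{(p-1)/p}\geq1$ produces exactly \eqref{esty2}.

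I expect the only delicate point to be the sharp exponential factor. Substituting the global supremum bound into each local $p$-variation estimate would give $e^{4L(b-a)}$, so obtaining the stated $e^{2L(b-a)}$ depends entirely on the telescoping induction that preserves the cancellation of the $\|y_{\tau_k}\|$-terms at every step instead of bounding them all simultaneously. The remaining work is the minor bookkeeping that $M=\frac{\|f(0)\|}{L}\vee 2\|g\|_\infty$ dominates both the affine drift offset and the driving term at once, which is what fixes the shape of the $\big(1+\ltn x\rtn_{p{\rm-var},[a,b]}\big)$ factor.
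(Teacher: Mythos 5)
Your proposal is correct and is essentially the argument the paper intends: the paper omits the proof, stating only that Corollary \ref{colnew} follows ``by the same arguments'' as Theorem \ref{YDE}, and your reconstruction is exactly that argument with the single correct modification of replacing $\|g(y_s)\|$ by $\|g\|_\infty$, which removes the $\|y_s\|$-dependent contribution of the driving term and hence the $e^{\alpha N}$ amplification. The telescoping you perform by induction is the same cancellation the paper exploits by direct summation in the last display of the proof of Theorem \ref{YDE} (there the substituted bounds $\|y_{\tau_k}\|\leq[\|y_a\|+M_0k]e^{\alpha k+2L(\tau_k-a)}$ already telescope term by term), so your worry about an $e^{4L(b-a)}$ loss does not actually arise in either formulation.
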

\begin{corollary}\label{coryest}
	The following estimate holds
	\begin{eqnarray}\label{yqvar}
	\| y \|_{p{\rm -var},[a,b]} &\leq& \Big[\|y_a\| +\left(\frac{\|f(0)\|}{L}\vee 2\|g(0)\|\right) (1+  \ltn x\rtn_{p{\rm-var},[a,b]})N_{[a,b]}(x)\Big]\times \notag\\
	&& \times e^{\alpha N_{[a,b]}(x)+2L(b-a) }N^{\frac{p-1}{p}}_{[a,b]}(x).
	\end{eqnarray}
\end{corollary}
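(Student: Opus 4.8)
The plan is to rerun the proof of Theorem \ref{YDE} essentially verbatim, changing only the treatment of the free term $\ltn x\rtn_{p{\rm-var},[s,t]}\,\|g(y_s)\|$ in \eqref{ypvar}. First I would split $\|g(y_s)\|\le \|g(0)\|+C_g\|y_s\|$, so that on a greedy interval $[s,t]=[\tau_k,\tau_{k+1}]$, where $\ltn x\rtn_{p{\rm-var},[s,t]}\le \gamma=\frac{1}{2(K+1)C_g}$, the term $2\ltn x\rtn_{p{\rm-var},[s,t]}\|g(y_s)\|$ is bounded by $2\|g(0)\|\,\ltn x\rtn_{p{\rm-var},[s,t]}+\frac{1}{K+1}\|y_s\|$. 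In contrast to Theorem \ref{YDE}, I keep the first summand proportional to $\ltn x\rtn_{p{\rm-var},[s,t]}$ rather than collapsing it into the constant $M_0$ through the bound $\ltn x\rtn_{p{\rm-var},[s,t]}\le\gamma$.

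Next I would feed this into the continuous Gronwall Lemma exactly as in the derivation of \eqref{ypvarest}. Performing the same integration and collecting the coefficient of $\|y_s\|$, the $\frac{1}{K+1}\|y_s\|$ contribution again combines with the drift term to produce the factor $e^\alpha=\frac{K+2}{K+1}$. Writing $c:=\frac{\|f(0)\|}{L}\vee 2\|g(0)\|$ and using $2\|g(0)\|\le c$ and $\frac{\|f(0)\|}{L}\le c$, this yields the per-interval estimate
\[
\ltn y\rtn_{p{\rm-var},[\tau_k,\tau_{k+1}]}\le c\big(1+\ltn x\rtn_{p{\rm-var},[a,b]}\big)e^{2L\Delta_k}+\|y_{\tau_k}\|\big(e^{\alpha+2L\Delta_k}-1\big),
\]
where $\Delta_k=\tau_{k+1}-\tau_k$ and I have enlarged $\ltn x\rtn_{p{\rm-var},[\tau_k,\tau_{k+1}]}$ to $\ltn x\rtn_{p{\rm-var},[a,b]}$.

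The crucial observation is that this is exactly inequality \eqref{ypvarest} with the constant $M_0$ replaced by $\hat{M}:=c\,(1+\ltn x\rtn_{p{\rm-var},[a,b]})$. Consequently the remainder of the proof of Theorem \ref{YDE} transfers word for word: from $\|y_{\tau_{k+1}}\|\le \|y_{\tau_k}\|+\ltn y\rtn_{p{\rm-var},[\tau_k,\tau_{k+1}]}$ an induction on $k$ gives $\|y_{\tau_k}\|\le[\|y_a\|+\hat{M} k]\,e^{\alpha k+2L(\tau_k-a)}$, and then Lemma \ref{additive} together with the summation over the $N_{[a,b]}(x)$ greedy intervals produces \eqref{yqvar} with $\hat{M}$ in place of $M_0$.

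I expect the only delicate point, and the place where this argument differs from both Theorem \ref{YDE} and Corollary \ref{colnew}, to be the bookkeeping of the inhomogeneous term. Here $g$ is unbounded, so $\|g(y_s)\|$ still contributes a genuine multiple of $\|y_s\|$; this is what keeps the $e^{\alpha N}$ factor alive, in contrast to Corollary \ref{colnew} where a bounded $g$ removes it. At the same time the $\|g(0)\|$ part must be retained as an explicit $\ltn x\rtn_{p{\rm-var},[a,b]}$-dependent term rather than absorbed into $M_0$ via $\gamma$. Once the per-interval bound is cast in the displayed form, no genuinely new computation remains.
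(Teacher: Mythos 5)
Your proposal is correct and follows exactly the route the paper intends: the paper gives no separate proof of Corollary \ref{coryest}, stating only that it follows ``by the same arguments'' as Theorem \ref{YDE}, and your modification --- splitting $\|g(y_s)\|\le\|g(0)\|+C_g\|y_s\|$ and keeping the $\|g(0)\|\,\ltn x\rtn_{p{\rm-var},[s,t]}$ contribution explicit instead of absorbing it into $M_0$ via the greedy-time bound $\ltn x\rtn_{p{\rm-var},[\tau_k,\tau_{k+1}]}\le\gamma$ --- is precisely the intended adjustment, after which the Gronwall step, the induction over greedy times, and the application of Lemma \ref{additive} carry over verbatim.
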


\medskip

The lemma below is useful in evaluating the difference of two solutions of equation \eqref{fSDE0}. 	The proof is similar to \cite[Lemma 3.1]{congduchong17} and will be omitted here.

\begin{lemma}\label{Q}
Let $y^1,y^2$ be two solution of \eqref{fSDE0}. Assign 
\[
Q(t)=Q(t,y^1,y^2) := g(y^1_t)-g(y^2_t), t\geq 0
\]
where $g$ satisfies (${\textbf H}_2$).

$(i)$ If in addition, $D_g$  is of Lipschitz continuity with Lipschitz constant $C'_g$, then
\begin{equation}\label{Q3}
	\ltn Q\rtn_{p{\rm-var},[u,v]}\leq C_g\ltn y^1-y^2\rtn_{p{\rm-var},[u,v]} + C'_g \|y^1-y^2\|_{\infty,[u,v]} \ltn y^1\rtn_{p{\rm-var},[u,v]}.
	\end{equation}	
	
$(ii)$ If g is a linear map, then $\ltn Q\rtn_{p{\rm-var},[u,v]}\leq C_g\ltn y^1-y^2\rtn_{p{\rm-var},[u,v]}$.
\end{lemma}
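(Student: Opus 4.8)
The statement to prove is Lemma~\ref{Q}, which estimates the $p$-variation seminorm of the difference $Q(t) = g(y^1_t) - g(y^2_t)$ between the $g$-images of two solutions. Let me sketch my approach.

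=== PROOF PROPOSAL ===

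\textbf{Overall strategy.} The plan is to control $\ltn Q\rtn_{p{\rm-var},[u,v]}$ by directly estimating the increments $Q(t)-Q(s) = [g(y^1_t)-g(y^1_s)] - [g(y^2_t)-g(y^2_s)]$ on a generic subinterval $[s,t]\subseteq[u,v]$, and then taking the supremum over partitions. For part $(ii)$, where $g$ is linear, this is immediate; for part $(i)$, the main work is a first-order Taylor expansion of $g$ with a Lipschitz control on the remainder coming from the assumed Lipschitz continuity of $D_g$.

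\textbf{Part (ii): the linear case.} Writing $g(y) = Cy + g(0)$, the increment telescopes as $Q_t - Q_s = C(y^1_t - y^2_t) - C(y^1_s - y^2_s) = C\big[(y^1-y^2)_t - (y^1-y^2)_s\big]$, so that $\|Q_t - Q_s\| \leq |C|\,\|(y^1-y^2)_t-(y^1-y^2)_s\|$ with $|C|\le C_g$. Summing the $p$-th powers over any partition and taking the supremum gives $\ltn Q\rtn_{p{\rm-var},[u,v]} \le C_g \ltn y^1 - y^2\rtn_{p{\rm-var},[u,v]}$ directly. No remainder term appears, which is why the bound has only one summand.

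\textbf{Part (i): the nonlinear case.} First I would fix $[s,t]\subseteq[u,v]$ and rewrite the increment by adding and subtracting a common anchor. A clean way is to integrate the derivative along the segments: for a $C^1$ map, $g(y^1_t)-g(y^2_t) = \int_0^1 D_g\big(y^2_t + \theta(y^1_t-y^2_t)\big)\,(y^1_t-y^2_t)\,d\theta$, and similarly at $s$. Subtracting and regrouping, the difference $Q_t-Q_s$ splits into (a) a term of the form $D_g(\cdot)\big[(y^1-y^2)_t-(y^1-y^2)_s\big]$, which is controlled by $\|D_g\|\le C_g$ times the increment of $y^1-y^2$, and (b) a term measuring the change in the derivative factor along $y^1$, which is controlled using the Lipschitz continuity of $D_g$ (constant $C'_g$), yielding a factor $\|y^1-y^2\|_{\infty,[s,t]}$ multiplying the increment of $y^1$. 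Collecting these and using the elementary inequality that for the first-type increments the supremum over partitions gives $C_g\ltn y^1-y^2\rtn_{p{\rm-var},[s,t]}$, while the second-type increments give $C'_g\|y^1-y^2\|_{\infty,[u,v]}\ltn y^1\rtn_{p{\rm-var},[u,v]}$, produces \eqref{Q3}.

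\textbf{Main obstacle.} The routine but delicate point is the regrouping in step (a)/(b): one must arrange the Taylor/mean-value decomposition so that each piece is genuinely a $p$-variation increment of a \emph{single} path (either $y^1-y^2$ or $y^1$) multiplied by a \emph{supremum} bound on the other factor, rather than a product of two increments (which would force a mixed-variation estimate via the Young--Loève inequality \eqref{YL0} and complicate the constant). Since the paper explicitly defers the full argument to \cite[Lemma 3.1]{congduchong17}, I expect the intended proof is exactly this bookkeeping, with the Lipschitz bound on $D_g$ doing the essential work in the remainder term. The bound is stated with the supremum norm of $y^1-y^2$ over $[u,v]$ (the full interval), so some care in matching subinterval and full-interval norms—harmless by monotonicity of these norms in the interval—is needed at the very end.
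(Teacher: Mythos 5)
The paper itself omits the proof, deferring to \cite[Lemma 3.1]{congduchong17}, so your Taylor-expansion-plus-Minkowski strategy is indeed the intended one, and part $(ii)$ is handled exactly as you say. However, the specific decomposition you wrote down for part $(i)$ does not deliver the right-hand side of \eqref{Q3}. You expand $g(y^1_t)-g(y^2_t)=\int_0^1 D_g\big(y^2_t+\theta z_t\big)z_t\,d\theta$ (with $z=y^1-y^2$), do the same at $s$, and subtract. In the resulting remainder term the two arguments of $D_g$ are $y^2_t+\theta z_t$ and $y^2_s+\theta z_s$, i.e.\ the \emph{time}-increment of the path $(1-\theta)y^2+\theta y^1$; the Lipschitz bound on $D_g$ therefore produces $C'_g\,\|z\|_{\infty}\big(\|y^2_t-y^2_s\|+\|z_t-z_s\|\big)$, which after summing over partitions gives $\ltn y^2\rtn_{p{\rm-var}}$ (or, after writing $y^2=y^1-z$, an extra unwanted term $C'_g\|z\|_\infty\ltn z\rtn_{p{\rm-var}}$), not the single term $C'_g\|z\|_\infty\ltn y^1\rtn_{p{\rm-var}}$ claimed. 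So the step you flag as the ``main obstacle'' is precisely where your chosen anchoring fails; it is not mere bookkeeping.

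The correct regrouping expands each solution along its own time-chord: with $A_\theta:=D_g\big(y^1_s+\theta(y^1_t-y^1_s)\big)$ and $B_\theta:=D_g\big(y^2_s+\theta(y^2_t-y^2_s)\big)$,
\begin{equation*}
Q_t-Q_s=\int_0^1 B_\theta\,(z_t-z_s)\,d\theta+\int_0^1 (A_\theta-B_\theta)\,(y^1_t-y^1_s)\,d\theta .
\end{equation*}
Here $\|B_\theta\|\leq C_g$, while the difference of the arguments of $D_g$ in $A_\theta-B_\theta$ is the \emph{convex combination} $(1-\theta)z_s+\theta z_t$, so $\|A_\theta-B_\theta\|\leq C'_g\|z\|_{\infty,[u,v]}$ with no increment of $y^2$ appearing; the factor multiplying it is the increment of $y^1$ alone. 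Minkowski's inequality over an arbitrary partition then yields \eqref{Q3} exactly. With this one correction your argument is complete and coincides with the proof in the cited reference.
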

Thanks to Lemma \ref{Q}, the difference of two solutions of \eqref{fSDE0} can be estimated in $p$-var norm as follows. 
\begin{corollary}\label{2sol1}
 Let $y^1,y^2$ be two solutions of \eqref{fSDE0} and assign $z_t=y^2_t-y^1_t$ for all $t\geq 0$.

$(i)$ If $D_g$  is of Lipschitz continuity with Lipschitz constant $C'_g$ then
	\begin{equation}\label{z.gen}
	\| z \|_{p{\rm -var},[a,b]} \leq \|z_a\|(N^\prime_{[a,b]}(x))^{\frac{p-1}{p}} 2^{N^\prime_{[a,b]}(x)}e^{2L (b-a)} , \ \forall a\leq b
	\end{equation}
in which
\begin{eqnarray}\label{Nprime}
	N^\prime_{[a,b]}(x) &\leq& 1+[2(K+1)(C_g\vee C'_g)]^p \ltn x \rtn_{p{\rm -var},[a,b]}^p(1+ \ltn y^1 \rtn_{p{\rm -var},[a,b]})^p.
	\end{eqnarray}
	
$(ii)$ If in addition g is a linear map then
\begin{eqnarray}\label{z.lin}
\| z \|_{p{\rm -var},[a,b]} &\leq&\|z_a\|e^{\alpha N_{[a,b]}(x) +2L (b-a)}. 
\end{eqnarray}

\end{corollary}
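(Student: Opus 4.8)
The plan is to mirror the proof of Theorem~\ref{YDE}, applied now to the difference $z=y^2-y^1$, exploiting that the resulting estimate is \emph{homogeneous} in $z$ (no additive $f(0),g(0)$ terms survive), which is precisely what forces the final bound to be proportional to $\|z_a\|$. First I would subtract the two integral versions \eqref{YDEintegral} of \eqref{fSDE0} to obtain, for $z_t=y^2_t-y^1_t$,
\[
z_t - z_s = \int_s^t \big[Az_u + \big(f(y^2_u)-f(y^1_u)\big)\big]\,du + \int_s^t Q(u)\,dx_u ,
\]
with $Q=g(y^2)-g(y^1)$ as in Lemma~\ref{Q}. Using $\|f(y^2_u)-f(y^1_u)\|\le C_f\|z_u\|$, $|A|+C_f=L$, $\|Q(s)\|\le C_g\|z_s\|$, and applying the Young--Loeve estimate \eqref{YL0} to $\int_s^t Q\,dx$ exactly as in the derivation of \eqref{ypvar}, I would pass to the $p$-variation seminorm inequality
\[
\ltn z\rtn_{p{\rm -var},[s,t]} \le \int_s^t L\ltn z\rtn_{p{\rm -var},[s,u]}\,du + L\|z_s\|(t-s) + C_g\|z_s\|\ltn x\rtn_{p{\rm -var},[s,t]} + (K+1)\ltn Q\rtn_{p{\rm -var},[s,t]}\ltn x\rtn_{p{\rm -var},[s,t]}.
\]
Every term is now proportional to $\|z_s\|$ or to $\ltn z\rtn$.

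\textbf{Case (ii), $g$ linear.} Here Lemma~\ref{Q}(ii) gives $\ltn Q\rtn_{p{\rm -var},[s,t]}\le C_g\ltn z\rtn_{p{\rm -var},[s,t]}$, so the last term is $(K+1)C_g\ltn x\rtn_{p{\rm -var},[s,t]}\ltn z\rtn_{p{\rm -var},[s,t]}$. On the original greedy intervals $[\tau_k,\tau_{k+1}]$ of \eqref{greedytime} with $\gamma=\frac{1}{2(K+1)C_g}$ one has $(K+1)C_g\ltn x\rtn_{p{\rm -var}}\le\frac12$, so this term is absorbed; applying the continuous Gronwall lemma exactly as for \eqref{ypvarest} (with $M_0$ replaced by $0$ and $\|g(y_s)\|$ by $C_g\|z_s\|$) gives the homogeneous recursion
\[
\ltn z\rtn_{p{\rm -var},[\tau_k,\tau_{k+1}]} \le \|z_{\tau_k}\|\big(e^{\alpha+2L(\tau_{k+1}-\tau_k)}-1\big), \qquad \|z_{\tau_{k+1}}\|\le\|z_{\tau_k}\|e^{\alpha+2L(\tau_{k+1}-\tau_k)}.
\]
Iterating yields $\|z_{\tau_k}\|\le\|z_a\|e^{\alpha k+2L(\tau_k-a)}$, and summing the seminorms over the $N_{[a,b]}(x)$ greedy intervals through Lemma~\ref{additive} (as in the passage to \eqref{estx2}) produces \eqref{z.lin}.

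\textbf{Case (i), the crux.} Now Lemma~\ref{Q}(i) contributes the extra term $C'_g\|z\|_{\infty,[s,t]}\ltn y^1\rtn_{p{\rm -var},[s,t]}$; bounding $\|z\|_{\infty,[s,t]}\le\|z_s\|+\ltn z\rtn_{p{\rm -var},[s,t]}$ turns the coefficient of $\ltn z\rtn_{p{\rm -var},[s,t]}$ into $(K+1)\big(C_g+C'_g\ltn y^1\rtn_{p{\rm -var},[s,t]}\big)\ltn x\rtn_{p{\rm -var},[s,t]}$, which the original greedy threshold no longer controls. The fix is to replace \eqref{greedytime} by greedy times for the control
\[
\tilde\omega(s,t) := 2(K+1)(C_g\vee C'_g)\,\ltn x\rtn_{p{\rm -var},[s,t]}\big(1+\ltn y^1\rtn_{p{\rm -var},[s,t]}\big),
\]
stopping each time $\tilde\omega=1$. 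Since $C_g+C'_g\ltn y^1\rtn\le(C_g\vee C'_g)(1+\ltn y^1\rtn)$, on each such interval $(K+1)(C_g+C'_g\ltn y^1\rtn)\ltn x\rtn\le\frac12$ and the implicit $\ltn z\rtn$ term is again absorbable. I would then check that $\tilde\omega^p$ is superadditive — using monotonicity of $\ltn y^1\rtn_{p{\rm -var},[\cdot,\cdot]}$ in the interval together with superadditivity of $\ltn x\rtn^p_{p{\rm -var}}$ — which gives $N'_{[a,b]}(x)-1\le\tilde\omega(a,b)^p$, i.e. \eqref{Nprime}. The remaining constant terms on each greedy interval are now of size $\|z_{\tau_k}\|$ rather than $\tfrac{1}{K+1}\|z_{\tau_k}\|$, so the per-step multiplier after Gronwall is bounded by $2$, giving $\|z_{\tau_{k+1}}\|\le 2\|z_{\tau_k}\|e^{2L(\tau_{k+1}-\tau_k)}$; iterating and summing over the $N'_{[a,b]}(x)$ intervals via Lemma~\ref{additive} produces the factor $(N'_{[a,b]}(x))^{\frac{p-1}{p}}2^{N'_{[a,b]}(x)}e^{2L(b-a)}$ of \eqref{z.gen}.

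I expect the genuine difficulty to lie entirely in case (i): calibrating the superadditive control $\tilde\omega$ so that its greedy count is \emph{exactly} \eqref{Nprime}, and verifying that the implicit $\ltn z\rtn$ term is uniformly absorbable across all greedy intervals. Once the partition is correctly chosen, the continuous-Gronwall step and the Lemma~\ref{additive} summation are routine repetitions of the corresponding steps in Theorem~\ref{YDE}, and the book-keeping that produces the multiplicative $2^{N'}$ factor and the polynomial prefactor $(N')^{(p-1)/p}$ is mechanical.
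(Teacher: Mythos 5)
Your proposal is correct and follows essentially the route the paper itself indicates (the paper omits the proof, deferring to the argument of Theorem \ref{YDE}): the homogeneous Gronwall recursion on greedy intervals, and in case $(i)$ the replacement of the greedy control by the superadditive $\ltn x\rtn^p_{p{\rm-var}}(1+\ltn y^1\rtn_{p{\rm-var}})^p$, which is exactly what forces the form of \eqref{Nprime}. One small caveat: in case $(ii)$ summing "through Lemma \ref{additive}" as in the passage to \eqref{estx2} would leave an extra factor $N_{[a,b]}(x)^{\frac{p-1}{p}}$ that is absent from \eqref{z.lin}; to recover \eqref{z.lin} exactly you should instead use the plain triangle (Minkowski) inequality $\ltn z\rtn_{p{\rm-var},[a,b]}\leq\sum_k\ltn z\rtn_{p{\rm-var},[\tau_k,\tau_{k+1}]}$, under which your telescoping sum closes to $\|z_a\|\big(e^{\alpha N_{[a,b]}(x)+2L(b-a)}-1\big)$ without the polynomial prefactor.
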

\begin{proof} 
The proof use similar arguments to the proof of Theorem \ref{YDE}, thus it will be omitted here. The the readers are referred to \cite[Proposition 1]{lejay}, \cite[Theorem 3.9]{congduchong17} for similar versions.	
\end{proof}

\begin{remark}\label{ysummary}
It follows from \eqref{Nest} that
	\begin{eqnarray*}
		N_{[a,b]}^\frac{p-1}{p}(x) &\leq& \Big(1 + [2(K+1)C_g]^p \ltn x \rtn^p_{p{\rm -var},[a,b]}\Big)^\frac{p-1}{p} \leq 1 +[2(K+1)C_g]^{p-1} \ltn x \rtn^{p-1}_{p{\rm -var},[a,b]},\\
		N_{[a,b]}^{\frac{2p-1}{p}}(x) &\leq& \Big(1 + [2(K+1)C_g]^p \ltn x \rtn^p_{p{\rm -var},[a,b]}\Big)^\frac{2p-1}{p} \leq 2^{\frac{p-1}{p}} \Big(1+[2(K+1)C_g]^{2p-1} \ltn x \rtn^{2p-1}_{p{\rm -var},[a,b]}\Big).
	\end{eqnarray*}
As a result, the norm estimates \eqref{estx2}, \eqref{esty2}, \eqref{yqvar}  have the same form
\begin{eqnarray}\label{genform}
	\| y \|_{p{\rm -var},[a,b]} &\leq& \|y_a\| \Lambda_1(x,[a,b])+ \Lambda_2(x,[a,b])
	\end{eqnarray}
in which  $\Lambda_i(x,[a,b])$ are functions of $\ltn x\rtn_{p{\rm-var},[a,b]}$. Similarly, \eqref{z.gen} (for a fixed solution $y^1$) and \eqref{z.lin} can also be rewritten in the form \eqref{genform} with $\Lambda_2 \equiv 0$.
\end{remark}

In the following, let $y$ be a solution of \eqref{fSDE0} on $[a,b]\subset \R^+$ and $\mu$ be the solution of the corresponding deterministic system, i.e
\begin{equation}\label{mu.equ}
\dot{\mu_t} = A\mu_t + f(\mu_t), t\in [a,b]
\end{equation}
with the same initial condition $\mu_a = y_a$. Assign $h_t := y_t - \mu_t$. The following result, which is used in studying singleton attractors in Theorem \ref{gbounded}, estimates the norms of $h$ with the initial condition $\|y_a\|$, up to a fractional order.
\begin{corollary}\label{h}
Assume that $g$ is bounded. Then for a fixed constant $\beta = \frac{1}{p} \in (\frac{1}{2},1)$, there exists for each interval $[a,b]$ a constant $D$ depending on $b-a$ such that
\begin{eqnarray}
\|h\|_{\infty,[a,b]} &\leq&  D\Big( \|y_a\|^\beta + 1 \Big)\ltn x\rtn_{p{\rm-var},[a,b]} N_{[a,b]}(x),\label{hinf} \\
\|h\|_{p{\rm-var},[a,b]} &\leq&  D\Big( \|y_a\|^\beta + 1 \Big)\ltn x\rtn_{p{\rm-var},[a,b]} N^{\frac{2p-1}{p}}_{[a,b]}(x).\label{hpvar}
\end{eqnarray}
\end{corollary}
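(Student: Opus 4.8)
The plan is to treat $h=y-\mu$ as the solution of its own Young equation with zero initial value and to run exactly the greedy-time machinery of the proof of Theorem~\ref{YDE}. Subtracting \eqref{mu.equ} from \eqref{YDEintegral} and using $\mu_a=y_a$ gives
\[
h_t=\int_a^t\big[Ah_s+f(y_s)-f(\mu_s)\big]ds+\int_a^t g(y_s)\,dx_s,\qquad h_a=0,
\]
so that $\|f(y_s)-f(\mu_s)\|\le C_f\|h_s\|$ and the drift is globally Lipschitz in $h$ with constant $L$ and \emph{no} constant term. I would fix the same $\gamma=\frac{1}{2(K+1)C_g}$ and the greedy times $\{\tau_k\}$ from \eqref{greedytime}, so that $(K+1)C_g\ltn x\rtn_{p{\rm-var},[\tau_k,\tau_{k+1}]}\le\frac12$ on every sub-interval.

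First I would derive the local estimate. On a greedy interval $[s,t]=[\tau_k,\tau_{k+1}]$ the Young--Loeve bound \eqref{YL0} applied to $\int g(y)\,dx$ produces a leading term $\|g(y_s)\|\,\ltn x\rtn_{p{\rm-var},[s,t]}\le\|g\|_\infty\ltn x\rtn_{p{\rm-var},[s,t]}$ --- here the boundedness of $g$ is essential, as it is precisely what keeps the initial datum out of this leading term --- plus a correction $KC_g\ltn y\rtn_{p{\rm-var},[s,t]}\ltn x\rtn_{p{\rm-var},[s,t]}$. Writing $\ltn y\rtn\le\ltn\mu\rtn+\ltn h\rtn$, absorbing the $\ltn h\rtn\,\ltn x\rtn$ contribution into the left-hand side, and applying the continuous Gronwall lemma exactly as in \eqref{ypvarest}, I expect a bound of the form
\[
\ltn h\rtn_{p{\rm-var},[s,t]}\le c\,\|h_s\|\,(t-s)+c\,\big(\|g\|_\infty+C_g\ltn\mu\rtn_{p{\rm-var},[s,t]}\big)\,\ltn x\rtn_{p{\rm-var},[s,t]},
\]
with $c=c(L,b-a)$. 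Crucially, since $g$ is bounded the coefficient of $\|h_s\|$ comes only from the drift, so the discrete recursion for $\|h_{\tau_k}\|$ closes with a Gronwall factor $e^{c(b-a)}$ rather than the $e^{\alpha N}$ of Theorem~\ref{YDE}; this is what keeps $D$ dependent on $b-a$ only.

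The fractional exponent is intended to enter through the deterministic factor $\ltn\mu\rtn_{p{\rm-var}}$. Since $\mu$ solves \eqref{mu.equ} it is $C^1$ with $\|\dot\mu\|_{\infty,[a,b]}\le L\|\mu\|_{\infty,[a,b]}+\|f(0)\|$, hence of finite $1$-variation, and I would interpolate $\ltn\mu\rtn_{p{\rm-var},[s,t]}\le\ltn\mu\rtn_{1{\rm-var},[s,t]}^{1/p}\big(2\|\mu\|_{\infty,[s,t]}\big)^{1-1/p}$. Summing the local forcing over the $N=N_{[a,b]}(x)$ greedy intervals, I would apply H\"older with exponents $(p,\tfrac{p}{p-1})$ together with the superadditivity $\sum_i\ltn x\rtn^p_{p{\rm-var},[\tau_i,\tau_{i+1}]}\le\ltn x\rtn^p_{p{\rm-var},[a,b]}$ from Lemma~\ref{additive}; the $\ltn x\rtn$-factors then collapse to a single $\ltn x\rtn_{p{\rm-var},[a,b]}$ and the $1$-variation factors to $\ltn\mu\rtn_{1{\rm-var},[a,b]}^{1/p}$, which is the term responsible for the order $\beta=1/p$ in the pre-factor $\|y_a\|^\beta+1$. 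This yields \eqref{hinf} after the crude bound $N^{(p-1)/p}\le N$; for \eqref{hpvar} I would, in addition, pass from the sub-interval seminorms to the global one via the upper inequality $\ltn h\rtn^p_{p{\rm-var},[a,b]}\le N^{p-1}\sum_i\ltn h\rtn^p_{p{\rm-var},[\tau_i,\tau_{i+1}]}$ of Lemma~\ref{additive}, which supplies the extra $N^{(p-1)/p}$ and hence the exponent $\tfrac{2p-1}{p}$.

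The main obstacle is the book-keeping around the interpolation. A naive estimate of the correction term $C_g\ltn y\rtn\ltn x\rtn$ only delivers a \emph{linear} dependence on $\|y_a\|$, because both factors $\ltn\mu\rtn_{1{\rm-var}}^{1/p}$ and $\|\mu\|_\infty^{1-1/p}$ scale like $\|y_a\|$, and their product has total order $1$. Thus the delicate point is to arrange the H\"older exponents and to exploit the boundedness of $g$ so that the initial-datum dependence is genuinely pushed down to the fractional order $\beta=1/p$, while simultaneously isolating the single clean factor $\ltn x\rtn_{p{\rm-var},[a,b]}$ and the stated powers $N$ and $N^{(2p-1)/p}$ of the greedy count. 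Getting the sup-norm factor in the interpolation to contribute at order strictly below $1$ in $\|y_a\|$ is where I expect the real work to lie.
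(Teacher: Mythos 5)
Your overall scaffolding (write $h=y-\mu$ as a Young equation with $h_a=0$, run the greedy times, Gronwall, then Lemma \ref{additive} for the extra $N^{(p-1)/p}$ in \eqref{hpvar}) matches the paper's, but the one step you flag as "where the real work lies" is precisely the step you have not supplied, and the route you propose for it cannot work. Interpolating $\ltn\mu\rtn_{p{\rm-var},[s,t]}\le\ltn\mu\rtn_{1{\rm-var},[s,t]}^{1/p}\bigl(2\|\mu\|_{\infty,[s,t]}\bigr)^{1-1/p}$ is homogeneous of degree one in $\mu$, so — as you yourself observe — it returns a linear dependence on $\|y_a\|$ no matter how you arrange the H\"older exponents afterwards; no amount of book-keeping over the greedy intervals will push that exponent below $1$. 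The missing idea is an interpolation at the level of $g$, not at the level of the $p$-variation norm of $\mu$: since $g$ is simultaneously bounded and Lipschitz,
\[
\|g(u)-g(v)\|\;\le\;\min\bigl(2\|g\|_\infty,\;C_g\|u-v\|\bigr)\;\le\;\bigl(2\|g\|_\infty\vee C_g\bigr)\,\|u-v\|^{\beta},\qquad \beta=\tfrac1p\in(\tfrac12,1).
\]
Applying this to the $\mu$-increment inside $\ltn g(h+\mu)\rtn_{p{\rm-var}}$ and using $\|\mu_t-\mu_s\|\le e^{Lr}(L\|\mu_a\|+\|f(0)\|)(t-s)$ gives
\[
\|g(h_t+\mu_t)-g(h_s+\mu_s)\|\;\le\;C_g\|h_t-h_s\|+D\bigl(1+\|\mu_a\|^{\beta}\bigr)(t-s)^{\beta},
\]
and the choice $\beta=1/p$ is exactly what makes $(t-s)^{\beta}$ a legitimate $p$-variation control (since $\beta p=1$). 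This is the sole source of the fractional power $\|y_a\|^{\beta}$ in \eqref{hinf}--\eqref{hpvar}; once it is in place, the rest of your argument (absorbing $C_g\ltn h\rtn\ltn x\rtn$ via the greedy choice $\gamma=\tfrac{1}{2(K+1)C_g}$, Gronwall, summation over the $N$ intervals) goes through as you describe. Without it the proof only yields a bound linear in $\|y_a\|$, which is strictly weaker than the statement and insufficient for its intended use in Step 1 of Theorem \ref{gbounded}, where the sub-linearity in $\|y_0\|$ is what produces the contraction $\eta<1$ in \eqref{ydissipative}.
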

\begin{proof}
	The proof follows similar steps to \cite[Proposition 4.6]{hairer07} with only small modifications in estimates for $p$ - variation norms and in usage of the continuous Gronwall lemma. To sketch out the proof, we first observe from \eqref{mu.equ} with $ r= b-a$ that
	\begin{equation} \label{muest1}
	\|\mu_t-\mu_s\| \leq \int_s^t (\|f(0)\| + L \|\mu_u\|)du \leq e^{Lr} (L\|\mu_a\|+ \|f(0)\| ) (t-s),\; a\leq s\leq t\leq b.
	\end{equation}
 Next, it follows  from ${\textbf H}_2$ and the boundedness of $g$ by $\|g\|_\infty$ that 
	\begin{equation}\label{hest1}
	\|h_{t}-h_s\| \leq \int_s^t L\|h_u\| du+  \|g\|_{\infty} \ltn x\rtn_{p{\rm-var},[s,t]} + K \ltn x \rtn_{p{\rm-var},[s,t]} \ltn g(h+\mu ) \rtn_{p{\rm-var},[s,t]}.
	\end{equation}
Observe that due to the boundedness of $g$,
	\begin{eqnarray*}\label{muest2}
	\|g(h_t+\mu_t) - g(h_s+\mu_s )\| 
	&\leq& C_g \|h_{t}-h_s\| + (2\|g\|_{\infty}\vee C_g) \|\mu_{t}-\mu_s\|^\beta \\\notag
	&\leq& C_g\|h_{t}-h_s\| +D(1+\|\mu_a\|^\beta) (t-s)^\beta,\qquad \forall a \leq s < t \leq b
	\end{eqnarray*}
	where the last estimate is due to \eqref{muest1} and $D$ is a generic constant depending on $b-a$. This leads to
	\begin{equation}\label{muest3}
	\ltn g(h+\mu)\rtn_{p{\rm-var},[s,t]} \leq C_g \ltn h \rtn_{p{\rm-var},[s,t]} +D(1+\|\mu_a\|^\beta)(t-s)^\beta. 
	\end{equation}
	Replacing \eqref{muest3} into \eqref{hest1} yields
	\begin{eqnarray*}
	\ltn h \rtn_{p{\rm-var},[s,t]} &\leq&  \int_s^t L	\ltn h \rtn_{p{\rm-var},[s,u]} du+  L\|h_s\|(t-s)+  \Big[\|g\|_{\infty} \vee D(1+\|\mu_a\|^\beta)  \Big]\ltn x\rtn_{p{\rm-var},[s,t]} +\\
	&& + KC_g \ltn x \rtn_{p{\rm-var},[s,t]} \ltn h \rtn_{p{\rm-var},[s,t]} 
	\end{eqnarray*}
	which is similar to \eqref{ypvar}. Using similar arguments to the proof of Theorem \ref{YDE} and taking into account \eqref{esty2}, we conclude that 
	\[
	\|h\|_{\infty,[a,b]} \leq e^{2Lr} \Big[ \|h_a\| + D(1+\|\mu_a\|^\beta)  \ltn x\rtn_{p{\rm-var},[a,b]}  N_{[a,b]}(x)\Big],
	\]
 for a generic constant $D$. Finally, \eqref{hinf} is derived since $h_a=0$. The estimate \eqref{hpvar} is obtained similarly.
\end{proof}


\section{Random attractors}

\subsection{Generation of random dynamical systems}

In this subsection we would like to present the generation of a random dynamical system from Young equation \eqref{stochYDE}. Let $(\Omega,\mathcal{F},\mP)$ be a probability space equipped with a so-called {\it metric dynamical system} $\theta$, which is a measurable mapping $\theta: \R \times \Omega \to \Omega$ such that $\theta_t:\Omega\to\Omega$ is $\mP-$ preserving, i.e $\mP(B) = \mP(\theta^{-1}_t(B))$ for all $B\in \mathcal{F}, t\in \R$, and $\theta_{t+s} = \theta_t \circ \theta_s$ for all $t,s \in \R$. A continuous {\it random dynamical system} $\varphi: \R \times \Omega \times \R^d \to \R^d$, $(t,\omega,y_0)\mapsto \varphi(t,\omega)y_0$ is then defined as a measurable mapping which is also continuous in $t$ and $y_0$ such that the cocycle property
\begin{equation}\label{cocycle}
\varphi(t+s,\omega)y_0 =\varphi(t,\theta_s \omega) \circ \varphi(s,\omega)y_0,\quad \forall t,s \in \R, \omega\in \Omega, y_0 \in \R^d
\end{equation}
is satisfied \cite{arnold}.\\ 
In our scenario, denote by $\cC^{0,p-\rm{var}}([a,b],\R^m)$ the closure of $\cC^{\infty}([a,b],\R^m)$ in $\cC^{p-\rm{var}}([a,b],\R^m)$, and by $\cC^{0,p-\rm{var}}(\R,\R^m)$ the space of all $x: \R\to \R^m$ such that $x|_I \in \cC^{0,p-\rm{var}}(I, \R^m)$ for each compact interval $I\subset\R$. Then equip $\cC^{0,p-\rm{var}}(\R,\R^m)$ with the compact open topology given by the $p-$variation norm, i.e  the topology generated by the metric:
\[
d_p(x_1,x_2): = \sum_{k\geq 1} \frac{1}{2^k} (\|x_1-x_2\|_{p{\rm-var},[-k,k]}\wedge 1).
\]
Assign
\[
\Omega:=\cC^{0,p-\rm{var}}_0(\R,\R^m):= \{x\in \cC^{0,p-\rm{var}}(\R,\R^m)|\; x_0=0\},
\]
and equip with the Borel $\sigma -$ algebra $\mathcal{F}$. Note that for $x\in \cC^{0,p-\rm{var}}_0(\R,\R^m)$, $\ltn x\rtn_{p{\rm-var},I} $ and $\|x\|_{p{\rm-var},I}$ are equivalent norms for every compact interval $I$ containing $0$. \\
To equip this measurable space $(\Omega,\cF)$ with a metric dynamical system, consider a stochastic process $\bar{Z}$ defined on a probability space $(\bar{\Omega},\bar{\mathcal{F}},\bar{\bP})$  with realizations in $(\cC^{0,p-\rm{var}}_0(\R,\R^m), \mathcal{F})$. Assume further that $\bar{Z}$ has stationary increments.	Denote by $\theta$ the {\it Wiener shift}
\[
(\theta_t x)_\cdot = x_{t+\cdot} - x_t,\forall t\in \R, x\in \cC^{0,p-\rm{var}}_0(\R,\R^m).
\]
It is easy to check that $\theta$ forms a continuous (and thus measurable) dynamical system $(\theta_t)_{t\in \R}$ on $(\cC^{0,p-\rm{var}}_0(\R,\R), \mathcal{F})$. Moreover, the Young integral satisfies the shift property with respect to $\theta$, i.e.
\begin{equation}\label{shift}
\int_a^b y_u dx_u = \int_{a-r}^{b-r} y_{r+u} d(\theta_r x)_u 
\end{equation}  
(see details in \cite[p.\ 1941]{congduchong17}). It follows from \cite[Theorem 5]{BRSch17} that, there exists a probability $\bP$ on $(\Omega, \mathcal{F}) = (\cC^{0,p-\rm{var}}_0(\R,\R^m), \mathcal{F})$ that is invariant under $\theta$, and the so-called {\it diagonal process}  $Z: \R \times \Omega \to \R^m, Z(t,x) = x_t$ for all $t\in \R, x \in \Omega$, such that $Z$ has the same law with $\bar{Z}$ and satisfies the {\it helix property}:
\[
Z_{t+s}(x) = Z_s(x) + Z_t(\theta_sx), \forall  x \in \Omega, t,s\in \R.
\]
Such stochastic process $Z$ has also stationary increments and almost all of its realizations belongs to $\cC^{0,p-\rm{var}}_0(\R,\R^m)$. It is important to note that the existence of $\bar{Z}$ is necessary to construct the diagonal process $Z$. We assume additionally that  $(\Omega, \mathcal{F}, \bP,\theta) $  is ergodic.\\
When dealing with fractional Brownian motion \cite{mandelbrot}, we can start with the space $\cC_0(\R,\R^m)$ of continuous functions on $\R$ vanishing at zero, with the Borel $\sigma-$algebra $\cF$, and the Wiener shift and the Wiener probability $\mP$, and then follow \cite[Theorem 1]{GASch} to construct an invariant probability measure $\mP^H = B^H \mP$ on the subspace $\cC^\nu$ such that $B^H \circ \theta = \theta \circ B^H$. It can be proved that $\theta$ is ergodic (see \cite{GASch}).\\
Under this circumstance, if we assume further that \eqref{Gamma} is satisfied,
then it follows from  Birkhorff ergodic theorem that
\begin{equation}\label{gamma}
\Gamma(x,p) := \limsup \limits_{n \to \infty} \Big(\frac{1}{n}\sum_{k=1}^{n}  \ltn \theta_{-k}x \rtn^p_{p{\rm -var},[-1,1]}\Big)^{\frac{1}{p}} = \Gamma(p)
\end{equation}	
for almost all realizations $x_t = Z_t(x)$ of $Z$. Particularly, in case $Z=B^H = (B_1^{H}, \dots, B_m^H)$ where $B_i^H$ are scalar fractional Brownian motions (not necessarily independent), we can apply Lemma 2.1 in \cite{congduchong17} with the estimate in \cite[Lemma 4.1 (iii), p.14]{duchongcong18} to obtain that $\Gamma(p) < \infty$.
\begin{proposition}
	The system \eqref{stochYDE}	generates a continuous random dynamical system.
\end{proposition}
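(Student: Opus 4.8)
The plan is to define the candidate cocycle through the solution map and then verify, in turn, the cocycle identity \eqref{cocycle}, the joint continuity in $(t,y_0)$, and the measurability. For $\omega = x \in \Omega$ and $y_0 \in \R^d$, set $\varphi(t,\omega)y_0 := y_t$, where $y$ is the unique solution of the integral equation \eqref{YDEintegral} with driving path $x$ and $y(0)=y_0$; since $x$ is defined on all of $\R$, the equation may be solved uniquely on any interval $[a,b]\subset\R$. Existence and uniqueness, together with the a priori bounds \eqref{estx}--\eqref{estx2}, are exactly the content of Theorem \ref{YDE}, so $\varphi$ is well defined and its orbits do not blow up on compact time intervals.

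The cocycle property is where the shift structure enters. Fixing $s$ and setting $\ti y_r := y_{s+r}$, the additivity of the Young integral together with the shift identity \eqref{shift} gives $\int_s^{s+t} g(y_r)\,dx_r = \int_0^t g(\ti y_r)\,d(\theta_s x)_r$, while the Lebesgue part transforms trivially. Hence $\ti y$ solves \eqref{YDEintegral} with driving path $\theta_s x$ and initial value $\ti y_0 = y_s = \varphi(s,\omega)y_0$. By the uniqueness in Theorem \ref{YDE}, $\ti y_t = \varphi(t,\theta_s\omega)\varphi(s,\omega)y_0$, whereas $\ti y_t = y_{s+t} = \varphi(s+t,\omega)y_0$; this is precisely \eqref{cocycle}, the extension to all $s,t\in\R$ following from the unique solvability on arbitrary intervals of $\R$ and the group property $\theta_{t+s}=\theta_t\circ\theta_s$.

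Continuity in $y_0$ is the easiest remaining ingredient: for two initial data $y^1_0,y^2_0$, Corollary \ref{2sol1} bounds the $p$-variation norm of the difference $z=y^2-y^1$ on $[0,t]$ by $\|z_0\|$ times a factor depending only on $\ltn x\rtn_{p{\rm-var},[0,t]}$ (see \eqref{z.gen}, and \eqref{z.lin} in the linear case). Since the $p$-variation norm dominates the supremum norm, this yields local Lipschitz dependence of $y_t$ on $y_0$, uniformly for $t$ in compact sets; combined with the continuity of each path $t\mapsto y_t$ (a consequence of $y\in\cC^{p{\rm-var}}$), we obtain joint continuity in $(t,y_0)$.

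The main obstacle is the joint measurability of $\varphi$, which I would derive from the continuous dependence of the Young solution on the driving path in the $p$-variation topology: if $x^n\to x$ in the metric $d_p$, then the corresponding solutions converge locally in $p$-variation, i.e.\ the solution map (the It\^o--Lyons map) is continuous for $1\leq p<2$. This step rests on the same greedy-time discretization and the Young--Loeve estimate \eqref{YL0} used in Theorem \ref{YDE}, now applied to the difference of two driving paths; the bounds \eqref{estx}--\eqref{estx2} and the control \eqref{Nest} on the number of greedy intervals keep the estimates uniform along the approximating sequence, while the extra regularity of $g$ in (${\textbf H}_2$) handles the nonlinearity. Continuity in $\omega$ together with the already-established continuity in $(t,y_0)$ gives joint continuity of $\varphi$, and since $\Omega=\cC^{0,p{\rm-var}}_0(\R,\R^m)$ is separable in the compact-open $p$-variation topology, joint continuity implies joint (Borel) measurability. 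Combining measurability, continuity and the cocycle identity shows that $\varphi$ is a continuous random dynamical system over $(\Omega,\cF,\bP,\theta)$.
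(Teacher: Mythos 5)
Your argument is correct and follows essentially the same route as the paper, whose proof simply delegates the details to \cite{BRSch17} and \cite[Section 4.2]{congduchong17}: there, as in your write-up, the cocycle is defined through the pathwise solution map and the identity \eqref{cocycle} is verified via the shift property \eqref{shift} of the Young integral together with uniqueness from Theorem \ref{YDE}. Your supplementary points --- continuity in $y_0$ from Corollary \ref{2sol1}, and measurability from continuity of the solution map in the driving path on the separable space $\cC^{0,p{\rm-var}}_0(\R,\R^m)$ --- are exactly the ingredients those references supply.
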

\begin{proof}
	The proof follows directly from \cite{BRSch17} and \cite[Section 4.2]{congduchong17}. The generated random dynamical system is defined by $\varphi(t,x)y_0 := y(t,x,y_0)$, for $t\geq 0, x\in \Omega$, which is the pathwise solution of \eqref{stochYDE} w.r.t. the starting point $y_0$ at time $0$. 
\end{proof}

\subsection{Existence of pullback attractors}
Given a random dynamical system $\varphi$ on $\R^d$, we follow \cite{crauelkloeden}, \cite[Chapter 9]{arnold} to present the notion of random pullback attractor. Recall that a set $\hat{M} :=
\{M(x)\}_{x \in \Omega}$ a {\it random set}, if $y
\mapsto d(y|M(x))$ is $\cF$-measurable for each $y \in \R^d$, where $d(E|F) = \sup\{\inf\{d(y, z)|z \in F\} | y \in E\}$  for $E,F$ are nonempty subset of $\R^d$ and $d(y|E) = d(\{y\}|E)$.  
An {\it universe} $\cD$ is a family of random sets
which is closed w.r.t. inclusions (i.e. if $\hat{D}_1 \in \cD$ and
$\hat{D}_2 \subset \hat{D}_1$ then $\hat{D}_2 \in \cD$). \\
In our setting, we define the universe $\cD$ to be a family of {\it tempered} random sets $D(x)$, which means the following: A random variable $\rho(x) >0$ is called {\it tempered} if it satisfies
\begin{equation}\label{tempered}
\lim \limits_{t \to \pm \infty} \frac{1}{t} \log^+ \rho(\theta_{t}x) =0,\quad \text{a.s.}
\end{equation}
(see e.g. \cite[pp. 164, 386]{arnold}) which, by \cite[p. 220]{ImkSchm01}), is equivalent to the sub-exponential growth
\[
\lim \limits_{t \to \pm \infty} e^{-c |t|} \rho(\theta_{t}x) =0\quad \text{a.s.}\quad \forall c >0.
\]
A random set $D(x)$ is called {\it tempered} if it is contained in a ball $B(0,\rho(x))$ a.s., where the radius $\rho(x)$ is a tempered random variable.\\
 A random subset $A$ is called invariant, if $\varphi(t,x)A(x) = A(\theta_t x)$ for all $t\in \R,\; x\in\Omega.$ An invariant random compact set $\mathcal{A}  \in \cD$ is called a {\it pullback random attractor} in $\cD$, if $\mathcal{A} $ attracts
any closed random set $\hat{D} \in \cD$ in the pullback sense,
i.e.
\begin{equation}\label{pullback}
\lim \limits_{t \to \infty} d(\varphi(t,\theta_{-t}x)
\hat{D}(\theta_{-t}x)| \mathcal{A} (x)) = 0.
\end{equation}
$\mathcal{A} $ is called a {\it forward random attractor} in $\cD$, if $\mathcal{A} $ is invariant and attracts
any closed random set $\hat{D} \in \cD$ in the forward sense,
i.e.
\begin{equation}\label{forward}
\lim \limits_{t \to \infty} d(\varphi(t,x)
\hat{D}(x)| \mathcal{A} (\theta_{t}x)) = 0.
\end{equation}
The existence of a random pullback attractor follows from the existence of a random pullback absorbing set (see \cite[Theorem 3]{crauelkloeden}). A random set $\mathcal{B}  \in \cD$ is called {\it pullback
	absorbing} in a universe $\cD$ if $\mathcal{B} $ absorbs all sets in
$\cD$, i.e. for any $\hat{D} \in \cD$, there exists a time $t_0 =
t_0(x,\hat{D})$ such that
\begin{equation}\label{absorb}
\varphi(t,\theta_{-t}x)\hat{D}(\theta_{-t}x) \subset
\mathcal{B} (x), \ \textup{for all}\  t\geq t_0.
\end{equation}
Given a universe $\cD$ and a random compact
pullback absorbing set $\mathcal{B} \in \cD$, there exists a unique random pullback attractor
 in $\cD$, given by
\begin{equation}\label{at}
\mathcal{A}(x) = \cap_{s \geq 0} \overline{\cup_{t\geq s} \varphi(t,\theta_{-t}x)\mathcal{B}(\theta_{-t}x)}. 
\end{equation}
Thanks to the rule of integration by parts for Young integral, the "variation of constants" formula for Young differential equations holds (see e.g. \cite{zahle} or \cite{ducGANSch18}), so that $y_t$ satisfies
\begin{equation}\label{variation}
y_t =\Phi (t-a)y_a + \int_a^t\Phi (t-s)f(y_s) ds + \int_a^t\Phi (t-s)g(y_s) d x_s,\quad \forall t\geq a.
\end{equation} 
We need the following auxiliary results. 
\begin{proposition}\label{YDEg}
	Given \eqref{estphi1} and \eqref{estphi2}, the following estimate holds: for any $0\leq a<b\leq c$
	\begin{eqnarray}\label{int2}
	\Big\|\int_a^{b}\Phi (c-s)g( y_s) d  x_s\Big\| &\leq& KC_A \Big[1+|A|(b-a)\Big]\ltn  x\rtn_{p{\rm -var},[a,b]}e^{-\lambda_A(c-b)} \Big[C_g \| y\|_{p{\rm -var},[a,b]}+\|g(0)\|\Big]. \notag\\ 
	\end{eqnarray}
\end{proposition}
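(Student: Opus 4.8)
The plan is to treat the matrix-valued integrand $w_s := \Phi(c-s)g(y_s)$ as a path of finite $p$-variation on $[a,b]$ and to apply the Young--Loeve estimate \eqref{YL0} with $q=p$, whose constant is precisely the $K$ of \eqref{constK}. This yields
\[
\Big\|\int_a^b w_s\,dx_s\Big\| \le \|w_a\|\,\|x_b-x_a\| + K\,\ltn w\rtn_{p{\rm-var},[a,b]}\,\ltn x\rtn_{p{\rm-var},[a,b]}.
\]
Since $\|x_b-x_a\|\le \ltn x\rtn_{p{\rm-var},[a,b]}$ and $K>1$ when $p\in(1,2)$, I would bound the right-hand side by $K\,\ltn x\rtn_{p{\rm-var},[a,b]}\,\|w\|_{p{\rm-var},[a,b]}$, where $\|w\|_{p{\rm-var},[a,b]}=\|w_a\|+\ltn w\rtn_{p{\rm-var},[a,b]}$. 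The whole problem then reduces to showing $\|w\|_{p{\rm-var},[a,b]}\le C_A[1+|A|(b-a)]e^{-\lambda_A(c-b)}\big(C_g\|y\|_{p{\rm-var},[a,b]}+\|g(0)\|\big)$.

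To estimate $\|w\|_{p{\rm-var},[a,b]}$ I would handle its two pieces separately. For the anchor value, \eqref{estphi1} together with $c-a\ge c-b$ gives $\|w_a\|\le \|\Phi(c-a)\|\,\|g(y_a)\|\le C_A e^{-\lambda_A(c-b)}\big(C_g\|y_a\|+\|g(0)\|\big)$. For the seminorm I would use the standard product inequality, obtained by writing each increment as $w_v-w_u=\Phi(c-v)[g(y_v)-g(y_u)]+[\Phi(c-v)-\Phi(c-u)]g(y_u)$ and applying submultiplicativity and the $\ell^p$ triangle inequality,
\[
\ltn w\rtn_{p{\rm-var},[a,b]} \le \|\Phi(c-\cdot)\|_{\infty,[a,b]}\,\ltn g(y)\rtn_{p{\rm-var},[a,b]} + \|g(y)\|_{\infty,[a,b]}\,\ltn \Phi(c-\cdot)\rtn_{p{\rm-var},[a,b]}.
\]
Here $s\mapsto c-s$ maps $[a,b]$ onto $[c-b,c-a]$ reversing orientation, which leaves the $p$-variation unchanged, so Proposition \ref{A} gives $\|\Phi(c-\cdot)\|_{\infty,[a,b]}\le C_A e^{-\lambda_A(c-b)}$ and $\ltn \Phi(c-\cdot)\rtn_{p{\rm-var},[a,b]}=\ltn\Phi\rtn_{p{\rm-var},[c-b,c-a]}\le |A|C_A e^{-\lambda_A(c-b)}(b-a)$; it is the left endpoint $c-b$ of the argument interval that produces the exponential factor. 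Combining with $\ltn g(y)\rtn_{p{\rm-var},[a,b]}\le C_g\ltn y\rtn_{p{\rm-var},[a,b]}$ and $\|g(y)\|_{\infty,[a,b]}\le C_g\|y\|_{\infty,[a,b]}+\|g(0)\|\le C_g\|y\|_{p{\rm-var},[a,b]}+\|g(0)\|$ controls the seminorm.

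Adding the anchor and seminorm bounds gives $\|w\|_{p{\rm-var},[a,b]}$ bounded by $C_A e^{-\lambda_A(c-b)}$ times $C_g\|y_a\|+C_g\ltn y\rtn_{p{\rm-var},[a,b]}+\|g(0)\|+|A|(b-a)\big(C_g\|y\|_{\infty,[a,b]}+\|g(0)\|\big)$. The decisive step is the exact identity $C_g\|y_a\|+C_g\ltn y\rtn_{p{\rm-var},[a,b]}=C_g\|y\|_{p{\rm-var},[a,b]}$, after which, using $C_g\|y\|_{\infty,[a,b]}+\|g(0)\|\le C_g\|y\|_{p{\rm-var},[a,b]}+\|g(0)\|$, the bracket collapses to $[1+|A|(b-a)]\big(C_g\|y\|_{p{\rm-var},[a,b]}+\|g(0)\|\big)$; multiplying by $K\ltn x\rtn_{p{\rm-var},[a,b]}$ reproduces \eqref{int2} exactly. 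I expect the only delicate points to be the orientation-and-endpoint bookkeeping that turns \eqref{estphi1}--\eqref{estphi2} into the factor $e^{-\lambda_A(c-b)}$, and the discipline of not prematurely replacing $\|y_a\|$ by $\|y\|_{p{\rm-var},[a,b]}$ in the anchor term — the exact cancellation above is what makes the constant match rather than merely being comparable.
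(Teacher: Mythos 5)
Your proposal is correct and follows essentially the same route as the paper: apply the Young--Loeve estimate \eqref{YL0}, absorb the anchor term using $\|x_b-x_a\|\le\ltn x\rtn_{p{\rm-var},[a,b]}$ and $K\ge 1$, then bound $\|\Phi(c-\cdot)g(y_\cdot)\|_{p{\rm-var},[a,b]}$ via the product rule for the $p$-variation seminorm together with Proposition \ref{A} and the Lipschitz bounds on $g$. The endpoint bookkeeping producing $e^{-\lambda_A(c-b)}$ and the recombination $C_g\|y_a\|+C_g\ltn y\rtn_{p{\rm-var},[a,b]}=C_g\|y\|_{p{\rm-var},[a,b]}$ match the paper's computation exactly.
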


\begin{proof}
	The proof follows directly from \eqref{estphi1} and \eqref{estphi2} as follows
	\begin{eqnarray*}
		&&\Big\|\int_a^b\Phi (c-s)g(y_s) d  x_s\Big\|\notag\\
		&\leq & \ltn  x\rtn_{p\rm{-var},[a,b]}\left( \|\Phi(c-a) g( y_a)\| + K\ltn \Phi(c-\cdot)g(y_\cdot)\rtn_{p{\rm-var},[a,b]}  \right)\notag\\
		&\leq&  \ltn  x\rtn_{p{\rm-var},[a,b]}\Big\{ \|\Phi(c-a)\| \| g( y_a)\| \Big.\notag\\
		&&+\Big.  K\left( \ltn \Phi(c-\cdot)\rtn_{p{\rm -var},[a,b] }\| g(y)\|_{\infty,[a,b]}  +   \| \Phi(c-\cdot)\|_{\infty,[a,b] }\ltn g(y)\rtn_{p{\rm -var},[a,b]}\right) \Big\}\notag\\
		&\leq&K C_A \ltn  x\rtn_{p{\rm -var},[a,b]} e^{-\lambda_A(c-b)}\times\notag\\
		&&\times\Big[C_g\| y_a\|+\|g(0)\| +|A|(b-a) \big(C_g\| y\|_{\infty,[a,b]} +\|g(0)\|\big)+C_g\ltn y\rtn_{p{\rm -var},[a,b]}\Big]\notag\\
		&\leq& KC_A\Big[1+|A|(b-a)\Big]\ltn  x\rtn_{p{\rm -var},[a,b]}e^{-\lambda_A(c-b)} \Big[C_g \| y\|_{p{\rm -var},[a,b]}+\|g(0)\|\Big] .
	\end{eqnarray*}
\end{proof}	
The following lemma is the crucial technique of this paper.
\begin{lemma}\label{ytest}
	Assume that $y_t$ satisfies \eqref{variation}. Then for any $n\geq 0$, 
	\begin{eqnarray}\label{yt}
	\|y_t\|e^{\lambda t}
	&\leq& C_A \|y_0\| + \frac{C_A}{\lambda_A - L_f} \|f(0)\| \Big(e^{\lambda t}-1\Big)\\
	&&+ \sum_{k=0}^{n} e^{\lambda_A}KC_A(1+|A|)\ltn  x\rtn_{p{\rm -var},\Delta_k}e^{\lambda k} \Big[C_g \| y\|_{p{\rm -var},\Delta_k}+\|g(0)\|\Big],  \forall t\in \Delta_n, \notag
	\end{eqnarray}
	where $\Delta_k:=[k,k+1]$, $L_f$ and $\lambda$ are defined in \eqref{L}.
\end{lemma}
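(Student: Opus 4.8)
The plan is to run the variation-of-constants representation \eqref{variation} with $a=0$ and convert it into a scalar Volterra inequality for $\|y_t\|$, isolating the Young-integral term as a source. First I would bound the linear and drift contributions: by Proposition \ref{A} one has $\|\Phi(t)y_0\|\le C_Ae^{-\lambda_A t}\|y_0\|$ and $\|\int_0^t\Phi(t-s)f(y_s)ds\|\le\int_0^t C_Ae^{-\lambda_A(t-s)}(\|f(0)\|+C_f\|y_s\|)ds$, using the global Lipschitz bound $\|f(y_s)\|\le\|f(0)\|+C_f\|y_s\|$. Writing $G(t):=\|\int_0^t\Phi(t-s)g(y_s)dx_s\|$ for the noise term and recalling $L_f=C_AC_f$, this yields
\[
\|y_t\|\le C_Ae^{-\lambda_A t}\|y_0\|+\frac{C_A\|f(0)\|}{\lambda_A}\big(1-e^{-\lambda_A t}\big)+L_f\int_0^t e^{-\lambda_A(t-s)}\|y_s\|\,ds+G(t).
\]
Multiplying by $e^{\lambda_A t}$ and setting $w(t):=\|y_t\|e^{\lambda_A t}$ removes the $t$-dependence from the kernel, giving $w(t)\le B(t)+L_f\int_0^t w(s)\,ds$ with $B(t)=B_1(t)+B_2(t)$, where $B_1(t)=C_A\|y_0\|+\frac{C_A\|f(0)\|}{\lambda_A}(e^{\lambda_A t}-1)$ carries the deterministic part and $B_2(t)=e^{\lambda_A t}G(t)$ the noise.

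Next I would apply the (generalized) continuous Gronwall lemma in the form $w(t)\le B(t)+L_f\int_0^t B(s)e^{L_f(t-s)}ds$ and divide by $e^{L_f t}$; since $\lambda=\lambda_A-L_f$, this gives exactly $\|y_t\|e^{\lambda t}=e^{-L_f t}w(t)\le e^{-L_f t}B(t)+L_f\int_0^t B(s)e^{-L_f s}ds$. Applied to $B_1$ alone, a direct evaluation of this expression (equivalently, solving the comparison ODE $W'=C_A\|f(0)\|e^{\lambda_A t}+L_fW$) collapses the $\lambda_A$-denominators into $\lambda$ and reproduces precisely the first two terms $C_A\|y_0\|+\frac{C_A}{\lambda_A-L_f}\|f(0)\|(e^{\lambda t}-1)$ of \eqref{yt}. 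The essential mechanism is that the Gronwall weight $e^{-L_f t}$ upgrades every occurrence of $e^{\lambda_A t}$ to $e^{\lambda t}$.

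It remains to control the noise contribution $P(t):=e^{-L_f t}B_2(t)+L_f\int_0^t B_2(s)e^{-L_f s}ds$ and show it is bounded by the sum in \eqref{yt}. Here I would use the additivity of the Young integral to split, for $s\in\Delta_j=[j,j+1]$ with $j\le n$, the integral $\int_0^s\Phi(s-r)g(y_r)dx_r$ over $\Delta_0,\dots,\Delta_{j-1}$ and the final piece $[j,s]$, and bound each by Proposition \ref{YDEg} with $c=s$ and $b-a\le 1$. Writing $c_k:=KC_A(1+|A|)\ltn x\rtn_{p{\rm -var},\Delta_k}[C_g\|y\|_{p{\rm -var},\Delta_k}+\|g(0)\|]$, the piece over $\Delta_k$ carries a factor $e^{-\lambda_A(s-(k+1))}$, so after multiplication by $e^{\lambda_A s}$ it contributes $c_k e^{\lambda_A}e^{\lambda_A k}$; since $s\le j+1$ the last piece is dominated by the full-$\Delta_j$ term, yielding the step-function bound $B_2(s)\le e^{\lambda_A}\sum_{k=0}^{j}c_ke^{\lambda_A k}=:S_j$ on $\Delta_j$. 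Substituting $S_j$ into $P(t)$ for $t\in\Delta_n$, integrating $e^{-L_f s}$ over each $\Delta_j$, interchanging the order of the two summations and evaluating the resulting geometric/telescoping sums is the technical heart of the argument: the boundary term $e^{-L_f t}B_2(t)$ cancels against the contribution of the final partial interval, and the Gronwall weight once more converts each $e^{\lambda_A k}$ into $e^{\lambda k}=e^{(\lambda_A-L_f)k}$, leaving exactly $P(t)\le\sum_{k=0}^{n}e^{\lambda_A}KC_A(1+|A|)\ltn x\rtn_{p{\rm -var},\Delta_k}e^{\lambda k}[C_g\|y\|_{p{\rm -var},\Delta_k}+\|g(0)\|]$. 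I expect this summation-by-parts bookkeeping — tracking the boundary and telescoping terms so that the $e^{-L_f k}$ factors land on the correct summands — to be the main obstacle; the individual estimates are routine given Propositions \ref{A} and \ref{YDEg}.
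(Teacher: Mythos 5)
Your proposal is correct and follows essentially the same route as the paper: the same variation-of-constants bound, the same two-stage Gronwall argument with the weight $e^{-L_f t}$ converting $\lambda_A$ into $\lambda=\lambda_A-L_f$, and the same decomposition of the Young-integral term over the unit intervals $\Delta_k$ via Proposition \ref{YDEg}. The ``bookkeeping'' you flag as the main obstacle is handled in the paper by the single identity $e^{-L_f(t-k)}+\int_k^t L_f e^{-L_f(s-k)}\,ds=1$, which collapses the boundary and integral contributions for each $k$ at once.
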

\begin{proof}
	First, for any $t\in [n,n+1)$, it follows from \eqref{estphi1} and the global Lipschitz continuity of $f$ that
	\begin{eqnarray*}
		\|y_t\| &\leq& \|\Phi (t)y_0\| + \int_0^t \|\Phi (t-s)f(y_s)\| ds + \Big\|\int_0^t\Phi (t-s)g(y_s) d x_s \Big\| \\
		&\leq& C_A e^{-\lambda_A t} \|y_0\| + \int_0^t C_A e^{-\lambda_A (t-s)} \Big(C_f \|y_s\| + \|f(0)\|\Big) ds + \Big\|\int_0^t\Phi (t-s)g(y_s) d x_s \Big\|\\
		&\leq& C_A e^{-\lambda_A t} \|y_0\| + \frac{C_A}{\lambda_A} \|f(0)\| (1- e^{-\lambda_At})  +C_AC_f \int_{0}^t e^{-\lambda_A(t-s)}  \|y_s\|ds + \beta_t,
	\end{eqnarray*}
	where $\beta_t := \Big\|\int_0^t\Phi (t-s)g(y_s) d x_s \Big\|$. Multiplying both sides of the above inequality with $e^{\lambda_A t}$ yields
	\[
	\|y_t\|e^{\lambda_A t} \leq C_A \|y_0\| + \frac{C_A}{\lambda_A} \|f(0)\| (e^{\lambda_At}-1) + \beta_t e^{\lambda_A t} +C_AC_f  \int_{0}^t  e^{\lambda_A s} \|y_s\|ds.
	\]
	By applying the continuous Gronwall Lemma \cite[Lemma 6.1, p\ 89]{amann}, we obtain
	\begin{eqnarray*}
		\|y_t\|e^{\lambda_A t} &\leq& C_A \|y_0\| + \frac{C_A}{\lambda_A} \|f(0)\| (e^{\lambda_At}-1) + \beta_t e^{\lambda_A t} \\
		&&+ \int_0^t L_f e^{L_f(t-s)} \Big[C_A \|y_0\| + \frac{C_A}{\lambda_A} \|f(0)\| (e^{\lambda_A s}-1) + \beta_s e^{\lambda_A s}\Big]ds.
	\end{eqnarray*}
	Once again, multiplying both sides of the above inequality with $e^{-L_f t}$ yields
	\begin{eqnarray}\label{yaux}
	\|y_t\|e^{\lambda t} &\leq& C_A \|y_0\| e^{-L_f t} + \frac{C_A}{\lambda_A} \|f(0)\| \Big(e^{\lambda t}-e^{-L_ft}\Big) + \beta_t e^{\lambda t} \notag\\
	&&+ \int_0^t L_f e^{-L_fs} \Big[C_A \|y_0\| + \frac{C_A}{\lambda_A} \|f(0)\| (e^{\lambda_A s}-1) + \beta_s e^{\lambda_A s}\Big]ds \notag\\
	&\leq& C_A \|y_0\| + \frac{C_A}{\lambda} \|f(0)\| \Big(e^{\lambda t}-1\Big)+ \beta_t e^{\lambda t} + \int_0^t L_f \beta_s e^{\lambda s}  ds.
	\end{eqnarray}
Next, assign $p([a,b]) := KC_A \Big[1+|A|(b-a)\Big]\ltn  x\rtn_{p{\rm -var},[a,b]}\Big[C_g \| y\|_{p{\rm -var},[a,b]}+\|g(0)\|\Big]$, and apply \eqref{int2} in Proposition \ref{YDEg}, it follows that for all $s\leq t$
	\allowdisplaybreaks
	\begin{eqnarray}\label{int210}
	\beta_s e^{\lambda s} 
	&=& e^{\lambda s} \Big\|\int_0^s\Phi (s-u)g(y_u) d x_u \Big\| \notag\\
	&\leq& e^{\lambda s} \sum_{k=0}^{\lfloor s \rfloor -1} \Big\|\int_{\Delta_k}\Phi (s-u)g(y_u) d x_u \Big\| +e^{\lambda s} \Big\|\int_{\lfloor s \rfloor}^{s}\Phi (s-u)g(y_u) d x_u \Big\| \notag\\
	&\leq& e^{\lambda s} \sum_{k=0}^{\lfloor  s \rfloor-1} e^{-\lambda_A(s-k-1)} p(\Delta_k) + e^{\lambda s} p ([\lfloor s\rfloor,s])\notag\\
	&\leq& \sum_{k=0}^{\lfloor s\rfloor} e^{\lambda s} e^{-\lambda_A(s-k-1)} p(\Delta_k) 
	\leq \sum_{k=0}^{\lfloor s\rfloor} e^{\lambda_A} e^{\lambda k} e^{-L_f(s-k)} p(\Delta_k). 
	\end{eqnarray}
	By replacing \eqref{int210} into \eqref{yaux} we obtain
	\allowdisplaybreaks
	\begin{eqnarray}\label{yn}
\|y_t\|e^{\lambda  t} 
	&\leq& C_A \|y_0\| + \frac{C_A}{\lambda} \|f(0)\| \Big(e^{\lambda t}-1\Big) \notag\\
	&&+  \sum_{k=0}^{n} e^{\lambda_A} e^{\lambda k} e^{-L_f(t-k)} p(\Delta_k) + L_f \int_0^t \sum_{k=0}^{\lfloor s \rfloor} e^{\lambda_A} e^{\lambda k} e^{-L_f(s-k)} p(\Delta_k) ds\notag\\
	&\leq& C_A \|y_0\| + \frac{C_A}{\lambda} \|f(0)\| \Big(e^{\lambda t}-1\Big)+ \sum_{k=0}^{n} e^{\lambda_A } e^{\lambda k} p(\Delta_k) \Big(e^{-L_f(t-k)}  + \int_{k}^t L_f e^{-L_f(s-k)}ds\Big) \notag\\
	&\leq& C_A \|y_0\| + \frac{C_A}{\lambda} \|f(0)\| \Big(e^{\lambda t}-1\Big) + \sum_{k=0}^{n} e^{\lambda_A } e^{\lambda k} p(\Delta_k),\quad \forall t \in [n, n+1)\notag
	\end{eqnarray}
	where we use the fact that 
	\begin{equation}\label{intequal}
	e^{-L_f(t-a)}  + \int_{a}^t L_f e^{-L_f(s-a)}ds = 1,\quad \forall t \geq a.
	\end{equation}
	The continuity of $y$ at $t= n+1$ then proves \eqref{yt}.\\
\end{proof}

We now formulate the first main result of the paper.

\begin{theorem}\label{attractor}
	Under the assumptions ${\textbf H}_1-{\textbf H}_3$, assume further that  $\lambda_A > C_fC_A$, where $\lambda_A$ and $C_A$ are given from \eqref{estphi1},\eqref{estphi2}. If the criterion \eqref{criterion}
\[
	\lambda_A - C_A C_f> C_A(1+|A|) e^{\lambda_A+2(|A|+C_f)} \Big\{\Big[2(K+1)C_g\Gamma(p)\Big]^p + \Big[2(K+1)C_g\Gamma(p)\Big]\Big\}
\]
	holds, where $\Gamma(p)$ is defined in \eqref{Gamma}, then the generated random dynamical system $\varphi$ of system \eqref{stochYDE} possesses a pullback attractor $\mathcal{A}$. 
\end{theorem}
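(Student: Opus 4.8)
The plan is to produce a random \emph{pullback absorbing set} that is tempered and compact, and then invoke the abstract criterion \cite[Theorem 3]{crauelkloeden}, which upgrades such a set into the unique pullback attractor given by the formula \eqref{at}. Since closed bounded balls in $\R^d$ are automatically compact, the whole problem reduces to bounding the solution norm at integer times and exhibiting a tempered radius $\rho(x)$ into which every orbit of a tempered set is eventually pulled back. The engine for this is the crucial estimate of Lemma \ref{ytest} combined with the solution bounds of Subsection 2.1.

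First I would feed the $p$-variation estimate \eqref{yqvar} of Corollary \ref{coryest} (equivalently the unified form \eqref{genform} of Remark \ref{ysummary}) into the right-hand side of \eqref{yt}. As each $\Delta_k=[k,k+1]$ is a unit interval, $\|y\|_{p{\rm -var},\Delta_k}\le\|y_k\|\Lambda_1(x,\Delta_k)+\Lambda_2(x,\Delta_k)$ with $\Lambda_1,\Lambda_2$ explicit functions of $\ltn x\rtn_{p{\rm -var},\Delta_k}$ alone. Writing $u_n:=\|y_n\|e^{\lambda n}$, inequality \eqref{yt} at $t=n+1$ becomes a discrete Gronwall relation
\[
u_{n+1}\le C_A\|y_0\|+\frac{C_A\|f(0)\|}{\lambda}\big(e^{\lambda(n+1)}-1\big)+\sum_{k=0}^{n}\big(a_k\,u_k+e^{\lambda k}b_k\big),
\]
where, by \eqref{Nest} and Remark \ref{ysummary}, the noise coefficient $a_k=a_k(\ltn x\rtn_{p{\rm -var},\Delta_k})$ is built from the linear term $[2(K+1)C_g\ltn x\rtn_{p{\rm -var},\Delta_k}]$ and the $p$-th power term $[2(K+1)C_g\ltn x\rtn_{p{\rm -var},\Delta_k}]^{p}$, and $b_k$ gathers the $f(0)$ and $g(0)$ contributions. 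Solving the recursion gives a bound
\[
\|y_n\|\le e^{-\lambda n}\Big(C_A\|y_0\|\textstyle\prod_{k=0}^{n-1}(1+a_k)+R_n\Big),\qquad R_n=\sum_{k=0}^{n-1}e^{\lambda k}b_k\textstyle\prod_{j=k+1}^{n-1}(1+a_j).
\]

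For the pullback argument I replace $x$ by $\theta_{-n}x$, so the coefficients become $a_k(\ltn\theta_{-n}x\rtn_{p{\rm -var},\Delta_k})$. By the helix/shift property of the noise, $\ltn\theta_{-n}x\rtn_{p{\rm -var},[k,k+1]}=\ltn\theta_{k-n}x\rtn_{p{\rm -var},[0,1]}\le\ltn\theta_{-(n-k)}x\rtn_{p{\rm -var},[-1,1]}$, which are exactly the quantities averaged in \eqref{gamma}. The Lyapunov exponent of the homogeneous factor is then
\[
\limsup_{n\to\infty}\frac1n\log\Big(e^{-\lambda n}\textstyle\prod_{k=0}^{n-1}(1+a_k)\Big)=-\lambda+\limsup_{n\to\infty}\frac1n\sum_{k=0}^{n-1}\log(1+a_k)\qquad\text{a.s.}
\]
Here $\lambda=\lambda_A-L_f$ from \eqref{L}. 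Using $\log(1+u)\le u$, the Birkhoff limit \eqref{gamma} for the $p$-th power contributions, and the power-mean inequality $\frac1n\sum_k\ltn\cdot\rtn\le(\frac1n\sum_k\ltn\cdot\rtn^{p})^{1/p}$ for the linear ones, I would show that the second term is dominated by $C_A(1+|A|)e^{\lambda_A+2L}\{[2(K+1)C_g\Gamma(p)]^{p}+[2(K+1)C_g\Gamma(p)]\}$. Criterion \eqref{criterion} is precisely the statement that this bound is strictly below $\lambda$, so the exponent is negative and $e^{-\lambda n}\prod_{k=0}^{n-1}(1+a_k)\to0$ a.s.; the same rate comparison forces $e^{-\lambda n}R_n$ to converge a.s. to a finite limit $\rho(x)$.

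It then follows that the radius of any $\hat D\in\cD$, being tempered, is annihilated by the decaying homogeneous factor, so each pullback orbit $\varphi(n,\theta_{-n}x)\hat D(\theta_{-n}x)$ eventually enters $\overline{B(0,\rho(x))}=:\mathcal{B}(x)$; sub-exponential growth of the noise increments under $\theta_t$ (the temperedness characterization \eqref{tempered}) shows $\rho$ is tempered, hence $\mathcal{B}\in\cD$, and \cite[Theorem 3]{crauelkloeden} delivers the attractor \eqref{at}. The main obstacle is the constant bookkeeping in the ergodic step: the per-interval bounds \eqref{yqvar} carry the exponential greedy-time factors $e^{\alpha N_{\Delta_k}(x)}$, so one must organize the competition between the decay $e^{-\lambda}$, the Young--Loève constant $K$ absorbed into $\gamma=\tfrac{1}{2(K+1)C_g}$, and the ergodic averages of $\ltn\theta_{-k}x\rtn_{p{\rm -var},[-1,1]}^{p}$ and $\ltn\theta_{-k}x\rtn_{p{\rm -var},[-1,1]}$ so that the Lyapunov rate is governed \emph{exactly} by the polynomial combination $[2(K+1)C_g\Gamma(p)]^{p}+[2(K+1)C_g\Gamma(p)]$ appearing in \eqref{criterion}; verifying the temperedness of $\rho$ and the $\cF$-measurability of $\mathcal{A}(x)$ are the remaining technical points.
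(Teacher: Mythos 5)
Your proposal follows essentially the same route as the paper: feed the unit-interval $p$-variation bounds of Remark \ref{ysummary} into Lemma \ref{ytest}, apply the discrete Gronwall lemma, shift by $\theta_{-n}$, control the Lyapunov exponent of the homogeneous product via the Birkhoff limit \eqref{gamma} so that criterion \eqref{criterion} makes it negative, and take the a.s.\ finite tempered limit of the inhomogeneous series as the absorbing radius. The only part you compress is the passage from integer times $n$ to arbitrary pullback times $t\in[n,n+1]$, which in the paper forces a supremum over shifts $\eps\in[0,1]$ in the definition of $b(x)$ and the accompanying measurability and temperedness verification (Proposition \ref{Glim}); you correctly flag this as the remaining technical work.
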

\begin{proof}		
{\bf Step 1.} To begin,  we rewrite the estimate \eqref{estx} in the short form, using \eqref{genform} in Remark \ref{ysummary}
\begin{equation}\label{Form}
\| y\|_{p{\rm -var},\Delta_k}\leq \|y_k\|\Lambda_1(x,\Delta_k) +M_0\Lambda_2(x,\Delta_k)
\end{equation}
where $\Delta_k=[k,k+1]$, $M_0$ is given by \eqref{M0} and 
\begin{eqnarray}
\Lambda_1(x,[a,b]) &:=& \left(1 +[2(K+1)C_g]^{p-1} \ltn x \rtn^{p-1}_{p{\rm -var},[a,b]} \right) F(x,[a,b]), \notag\\
\Lambda_2(x,[a,b]) &:=&   2^{\frac{p-1}{p}}\left(1 +[2(K+1)C_g]^{2p-1} \ltn x \rtn^{2p-1}_{p{\rm -var},[a,b]} \right) F(x,[a,b]), \label{Lambda}\\
 F(x,[a,b]) &:=& \exp\left\{\alpha  \left(1 +[2(K+1)C_g]^{p} \ltn x \rtn^{p}_{p{\rm -var},[a,b]} \right)+2L (b-a)\right\},\notag
 \end{eqnarray}
for $L, \alpha $  in \eqref{L} and \eqref{constK} respectively. Replacing \eqref{Form} into \eqref{yt} in Lemma \ref{ytest} and using $M_1,M_2$ in \eqref{M1} and \eqref{M2}, we obtain
\allowdisplaybreaks
\begin{eqnarray}\label{xn}
\| y_n\|e^{\lambda n}
&\leq &C_A \|y_0\| +( e^{\lambda  n}-1)\frac{C_A \|f(0)\|}{\lambda }\notag\\
&&+e^{\lambda_A}KC_A(1+|A|) \sum_{k=0}^{n-1} e^{\lambda  k} \ltn  x\rtn_{p{\rm -var},\Delta_k}\Big[C_g\Big(\|y_k\| \Lambda_1(x,\Delta_k) +M_0\Lambda_2(x,\Delta_k)\Big)+\|g(0)\|\Big]   \notag \\
&\leq &C_A \|y_0\| + M_1C_g\sum_{k=0}^{n-1} \ltn  x\rtn_{p{\rm -var},\Delta_k} \Lambda_1(x,\Delta_k) e^{\lambda  k} \|  y_k\| \notag\\
&&  +M_2\sum_{k=0}^{n-1} e^{\lambda k}\left[1+ \ltn  x\rtn_{p{\rm -var},\Delta_k} \Big( 1+\Lambda_2(x,\Delta_k) \Big)\right]
\end{eqnarray}
Assign $a:= C_A\|y_0\|$, $u_k:= e^{\lambda k} \| y_k\|$ and 
	\begin{eqnarray}
	G(x,[a,b]) &:=& \ltn  x\rtn_{p{\rm -var},[a,b]}  \Lambda_1(x,[a,b]),\label{G}\\
	H(x,[a,b]) &:=& 1+ \ltn  x\rtn_{p{\rm -var},[a,b]} \Big( 1+\Lambda_2(x,[a,b]) \Big) \label{H}
\end{eqnarray}
for all $k \geq 0$, where $\Lambda_1,\Lambda_2$ are given by \eqref{Lambda}. Observe that \eqref{xn} has the form
\begin{eqnarray*}
u_n
\leq a + M_1C_g\sum_{k=0}^{n-1} G(x,\Delta_k)  u_k + M_2\sum_{k=0}^{n-1} e^{\lambda k}H(x,\Delta_k).
\end{eqnarray*}
We are in the position to apply the discrete Gronwall lemma \ref{gronwall}, so that
\begin{eqnarray}\label{y}
	\|y_n(x,y_0)\|&\leq &C_A\|y_0\|e^{-\lambda n}  \prod_{k=0}^{n-1} \Big[1+M_1 C_g G(x,\Delta_k)\Big]\notag\\ &&+M_2 \sum_{k=0}^{n-1} e^{-\lambda (n-k)} H(x,\Delta_k) \prod_{j=k+1}^{n-1} \Big[1+M_1C_g G(x,\Delta_j)\Big],\quad \forall n\geq 1.
\end{eqnarray}

{\bf Step 2.} Next, for any $t\in [n,n+1]$, due to \eqref{estx} and \eqref{Nest}, we can write
\allowdisplaybreaks
\begin{eqnarray}
\|y_t(x,y_0)\|&\leq& \|y_n(x,y_0)\|F(x,\Delta_n) + M_0\Lambda_0(x,\Delta_n)  \label{y0}\\
\text{where}\  \Lambda_0(x,[a,b]) &:=& \left(1 +[2(K+1)C_g]^p \ltn x \rtn^{p}_{p{\rm -var},[a,b]} \right) F(x,[a,b]). \label{Lambda0}
\end{eqnarray}
Consequently, replacing $x$ with $\theta_{-t}x$ in \eqref{y0} and using \eqref{y} yields
 \begin{eqnarray}
&&	\|y_t( \theta_{-t}x,y_0(\theta_{-t}x)) \|\notag\\
&\leq &C_A\|y_0(\theta_{-t}x)\|e^{-\lambda n} F(\theta_{-t}x,\Delta_n) \prod_{k=0}^{n-1} \Big[1+M_1C_g G(\theta_{k-t} x,[0,1])\Big] + M_0\Lambda_0(\theta_{-t}x,\Delta_n)  \notag\\
	&&+M_2 \sum_{k=0}^{n-1} e^{-\lambda (n-k)}F(\theta_{-t}x,\Delta_n)H(\theta_{k-t} x, [0,1]) \prod_{j=k+1}^{n-1} \Big[1+M_1C_g G(\theta_{j-t}x,[0,1])\Big]\notag\\
	&\leq& C_A F(x,[-1,1])\|y_0(\theta_{-t}x)\|e^{-\lambda n} \sup_{\epsilon\in[0,1]} \prod_{k=1}^{n} \Big[1+M_1 C_g G(\theta_{-k} x,[-\epsilon,1-\epsilon])\Big]+M_0  \Lambda_0(x,[-1,1]) \notag\\
	&&+M_2F( x,[-1,1]) \sup_{\epsilon\in[0,1]}\sum_{k=1}^{n} e^{-\lambda k}  H(\theta_{-k} x, [-\epsilon,1-\epsilon]) \prod_{j=1}^{k-1} \Big[1+M_1C_g G(\theta_{-j}x,[-\epsilon,1-\epsilon])\Big]\notag\\
	&\leq& C_AF(x,[-1,1])\|y_0(\theta_{-t}x)\|e^{-\lambda n}  \sup_{\epsilon\in[0,1]}  \prod_{k=1}^{n} \Big[1+M_1 C_g G(\theta_{-k} x,[-\epsilon,1-\epsilon])\Big]+M_0  \Lambda_0(x,[-1,1]) \notag\\
	&&+M_2F( x,[-1,1])b(x),\label{yy0}\\
&\text{where}& b(x):= \sup_{\epsilon\in[0,1]} \sum_{k=1}^{\infty}e^{-\lambda k}  H(\theta_{-k}x,[-\epsilon,1-\epsilon])\prod_{j=1}^{k-1} \Big[1+M_1C_gG( \theta_{-j}x,[-\epsilon,1-\epsilon])\Big],	\label{bx}
\end{eqnarray}
($b(x)$ can take value $\infty$). Applying the inequality $\log(1+ae^b)\leq a+b$ for $a,b\geq 0$ and using \eqref{G}, \eqref{Form}, we obtain
	\begin{eqnarray*}
		&& \log \Big(1+ M_1C_g G(x,[-\epsilon,1-\epsilon])\Big) \\
		&\leq&   M_1C_ge^{\alpha+2L} \Big[ \ltn  x\rtn_{p{\rm -var},[-1,1]} + [2(K+1)C_g]^{p-1}\ltn  x\rtn_{p{\rm -var},[-1,1]}^p \Big] +\frac{[2(K+1)C_g]^p \ltn x \rtn^p_{p{\rm -var},[-1,1]}}{K+1} \\
		&\leq& \Big[M_1e^{\alpha+2L}+ 2\Big][2(K+1)]^{p-1}C_g^p  \ltn x \rtn^p_{p{\rm -var},[-1,1]} + M_1e^{\alpha+4L}C_g \ltn  x\rtn_{p{\rm -var},[-1,1]}\\
		&\leq&  C_A e^{\lambda_A+2L}(1+|A|) \Big\{\Big[2(K+1)C_g\ltn  x\rtn_{p{\rm -var},[-1,1]}\Big]^p + \Big[2(K+1)C_g\ltn  x\rtn_{p{\rm -var},[-1,1]}\Big]\Big\}.
	\end{eqnarray*}
	Together with \eqref{gamma} and \eqref{M1}, it follows that for a.s. all $x$,
	\allowdisplaybreaks
	\begin{eqnarray}\label{Gnew}
		&&\limsup \limits_{n \to \infty}\frac{1}{n}\log \left\{\sup_{\eps\in[0,1]}\prod_{k=1}^{n} \Big[1+M_1 C_gG(\theta_{-k}x,[-\epsilon,1-\epsilon])\Big]\right\}\notag\\
		&&\leq C_A e^{\lambda_A+2L}(1+|A|) \Big\{\Big[2(K+1)C_g\Gamma(p)\Big]^p + \Big[2(K+1)C_g\Gamma(p)\Big]\Big\} = \hat{G},
	\end{eqnarray}
	where $ \hat{G}$ is defined in \eqref{Gmu} and is also the right hand side of \eqref{criterion}, and $L$ is defined in \eqref{L}. 
Hence for $t\in \Delta_n$ with $0<\delta< \frac{1}{2}(\lambda-\hat{G})$ and $n \geq n_0$ large enough
	\[
	e^{(-\delta+\hat{G})n}\leq \sup_{\eps\in[0,1]} \prod_{k=0}^{n-1} \Big[1+M_1 C_gG( \theta_{-k}x,[-\eps,1-\eps])\Big]\leq e^{(\delta+\hat{G})n}.
	\]
Starting from any point $y_0(\theta_{-t}x) \in D(\theta_{-t}x)$ which is tempered, there exists, due to \eqref{tempered}, an $n_0$ independent of $y_0$ large enough such that for any $n \geq n_0$ and any $t\in[n,n+1]$ 	
\begin{eqnarray}
\|y_t(\theta_{-t}x,y_0(\theta_{-t}x))\|
&\leq&C_Ae^{\lambda_A}\|y_0(\theta_{-t}x)\|F(x,[-1,1])\exp\left\{-\left(\lambda-\hat{G} -\delta\right)n\right\}\label{yfinal}\\
&&+M_0\Lambda_0(x,[-1,1])+M_2F(x,[-1,1]) b(x) \notag\\
&\leq&1+M_2 F( x,[-1,1])b( x) + M_0\Lambda_0(x,[-1,1]) =: \hat{b}(x)\label{bhat}
\end{eqnarray}
where $F, \Lambda_0$ are given in \eqref{Lambda} and \eqref{Lambda0}. In addition, it follows from \eqref{Nest} and the inequality $\log (1+ ab) \leq \log(1+a) + \log b$ for all $a \geq 0, b\geq 1$, that
\begin{eqnarray}\label{bhat.temp.}
\log \hat{b}(x) &\leq& \log [1+ M_2 F( x,[-1,1])] + \log [1+b(x)] \notag\\
&& + \log \Big\{1+M_0\Big[1 + \Big(2(K+1)C_g\Big)^p \ltn x \rtn^p_{p{\rm -var},[-1,1]}\Big] F(x,[-1,1]) \Big\}\notag\\
&\leq&D+ \log [1+b(x)]+  [2(K+1)C_g]^p \ltn x \rtn^p_{p{\rm -var},[-1,1]} + 2\log F (x,[-1,1])\notag\\
&\leq&  D(1+ \ltn x \rtn^p_{p{\rm -var},[-1,1]})+ \log [1+b(x)]
\end{eqnarray}
where $D$ is a constant.

{\bf Step 3.} Notice that \eqref{gamma} implies $\lim\limits_{n\to\infty} \frac{\ltn \theta_{-n}x\rtn_{p{\rm-var},[-1,1]}}{n}=0$. The proof would be complete if one can prove Proposition \ref{Glim} below, that $b(x)$ is finite and tempered a.s. Indeed, assume that Proposition \ref{Glim} holds, then by applying \cite[Lemma \ 4.1.2]{arnold}, we obtain the temperedness of $\hat{b}(x)$ in the sense of \eqref{tempered}. We conclude that there
exists a compact absorbing set $\mathcal{B}(x) = \bar{B}(0,\hat{b}(x))$ and thus a pullback attractor $\mathcal{A}(x)$ for system \eqref{fSDE0} in the form of \eqref{at}. 
\end{proof}

To complete the proof of Theorem \ref{attractor}, we now formulate and prove Proposition \ref{Glim}.
\begin{proposition}\label{Glim}
	Assume that \eqref{criterion} holds. Then $b(x)$ defined in \eqref{bx} 
\[
	b(x):=\sup_{\epsilon\in[0,1]} \sum_{k=1}^{\infty}e^{-\lambda k}  H(\theta_{-k}x,[-\epsilon,1-\epsilon])\prod_{j=1}^{k-1} \Big[1+M_1C_gG( \theta_{-j}x,[-\epsilon,1-\epsilon])\Big],\quad \forall x\in \Omega
\]
is finite and tempered a.s. (in the sense of \eqref{tempered}). 
\end{proposition}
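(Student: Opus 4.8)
The plan is to replace $b$ by a monotone majorant on the fixed window $[-1,1]$, establish finiteness by a root test fed by the Birkhoff average already computed in \eqref{Gnew}, and then prove temperedness through a two-sided use of the ergodic theorem.

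\textbf{Reduction.} First I would observe that for every $\epsilon\in[0,1]$ one has $[-\epsilon,1-\epsilon]\subseteq[-1,1]$, and that $G(\cdot,[a,b])$ and $H(\cdot,[a,b])$ from \eqref{G}, \eqref{H} are nondecreasing in $\ltn x\rtn_{p{\rm-var},[a,b]}$. Hence the supremum over $\epsilon$ in \eqref{bx} is dominated by the largest window, giving
\[
b(x)\le\tilde b(x):=\sum_{k=1}^{\infty}e^{-\lambda k}\,H(\theta_{-k}x,[-1,1])\prod_{j=1}^{k-1}\big[1+M_1C_gG(\theta_{-j}x,[-1,1])\big],
\]
so it suffices to treat $\tilde b$. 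I would write $\phi(x)$ for the bound on $\log(1+M_1C_gG(x,[-1,1]))$ furnished by the display just before \eqref{Gnew}, and $\psi(x)$ for the corresponding polynomial bound on $\log H(x,[-1,1])$; both are nonnegative and, being controlled by $\ltn x\rtn_{p{\rm-var},[-1,1]}$ and $\ltn x\rtn^{p}_{p{\rm-var},[-1,1]}$, integrable because $\Gamma(p)<\infty$ (using Jensen for the first power), with $\mathbb{E}\phi\le\hat{G}$. A useful structural remark is the exact affine identity $\tilde b(\theta_1 x)=e^{-\lambda}H(x,[-1,1])+e^{-\lambda}[1+M_1C_gG(x,[-1,1])]\tilde b(x)$, which presents $\tilde b$ as the stationary solution of a random affine recursion whose multiplier has negative Lyapunov exponent $\mathbb{E}\log c=-\lambda+\mathbb{E}\log(1+M_1C_gG)\le\hat{G}-\lambda<0$ by \eqref{criterion}; this is the conceptual reason both conclusions must hold.

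\textbf{Finiteness.} Writing $T_k$ for the $k$-th summand of $\tilde b(x)$, its logarithm is at most $-\lambda k+\psi(\theta_{-k}x)+\sum_{j=1}^{k-1}\phi(\theta_{-j}x)$. I would apply Birkhoff's ergodic theorem to the stationary sequences $\phi(\theta_{-j}x)$ and $\psi(\theta_{-j}x)$, obtaining $\frac1k\sum_{j=1}^{k-1}\phi(\theta_{-j}x)\to\mathbb{E}\phi\le\hat{G}$ and, as a Cesàro consequence, $\frac1k\psi(\theta_{-k}x)\to0$ a.s. Thus $\limsup_k\frac1k\log T_k\le\hat{G}-\lambda<0$, and the root test gives $\tilde b(x)<\infty$, hence $b(x)<\infty$, a.s.

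\textbf{Temperedness (the main obstacle).} The goal is $\lim_{n\to\pm\infty}\frac1{|n|}\log^+\tilde b(\theta_n x)=0$. Using $\theta_{-j}\theta_n=\theta_{n-j}$, the $k$-th summand of $\tilde b(\theta_n x)$ has logarithm at most $-\lambda k+\psi(\theta_{n-k}x)+\sum_{i=n-k+1}^{n-1}\phi(\theta_i x)$. The hard part will be the uniform control of the shifted partial sums $\sum_{i=a}^{b}\phi(\theta_i x)$ over windows whose endpoints grow with $n$. I would extract from the two-sided ergodic theorem that, for a.e.\ $x$, $\sum_{i=a}^{b}\phi(\theta_i x)=\hat{G}(b-a+1)+o(|a|)+o(|b|)$, together with the sublinear bounds $\psi(\theta_l x)=o(|l|)$ and $\max_{|l|\le n}\psi(\theta_l x)=o(n)$, the latter being the temperedness of $\ltn\cdot\rtn_{p{\rm-var},[-1,1]}$ recorded after \eqref{bhat.temp.} via \eqref{gamma}. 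Substituting and splitting the series at $k=n$ (for $k>n$ the window $[n-k+1,n-1]$ crosses the origin, so both one-sided limits in \eqref{gamma} are needed), each summand's exponent is at most $-(\lambda-\hat{G})k+o(n)+o(k)$; since $\lambda>\hat{G}$, the geometric factor $e^{-(\lambda-\hat{G})k}$ sums to a finite number while all $o(\cdot)$ corrections come out as a common $e^{o(n)}$. This yields $\tilde b(\theta_n x)\le e^{o(n)}$ and hence the claim for $n\to+\infty$; the case $n\to-\infty$ is symmetric. Since $0\le b\le\tilde b$, $b$ is then finite and tempered in the sense of \eqref{tempered}, which is all that \eqref{bhat.temp.} and \cite[Lemma 4.1.2]{arnold} need to finish Theorem \ref{attractor}.
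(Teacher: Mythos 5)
Your argument is correct, and for the finiteness part it is essentially the paper's: you both dominate the supremum over $\epsilon$ by the window $[-1,1]$ (using monotonicity of $G,H$ in $\ltn x\rtn_{p{\rm-var},\cdot}$) and then feed the Birkhoff averages of $\log(1+M_1C_gG(\cdot,[-1,1]))$ and $\log H(\cdot,[-1,1])$, with $\E$ of the former bounded by $\hat G$, into a comparison with a geometric series whose ratio is controlled by $\lambda-\hat G>0$. Where you genuinely diverge is the temperedness step. The paper does not estimate $b(\theta_nx)$ sharply at all: it proves the crude inequality $b(\theta_{-n}x)\le e^{\lambda n}b(x)$ (each factor $1+M_1C_gG\ge1$, so the shifted tail is dominated by a tail of $b(x)$ after multiplying by $e^{-\lambda n}$), interpolates via $b(\theta_tx)\le\max\{b(\theta_nx),b(\theta_{n+1}x)\}$, and then invokes the zero--infinity dichotomy of \cite[Proposition 4.1.3(i)]{arnold}: for an ergodic metric dynamical system the quantity $\limsup_{t\to\pm\infty}\tfrac1{|t|}\log^+b(\theta_tx)$ is a.s.\ either $0$ or $+\infty$, so any finite bound forces it to be $0$. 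Your route replaces this soft dichotomy with a direct two-sided quantitative estimate $\tilde b(\theta_nx)\le e^{o(n)}$; it is more elementary in that it avoids the Arnold lemma, but it requires the uniformity you allude to and should state explicitly: for a.e.\ $x$ and every $\eps>0$ there is $C_\eps(x)$ with $\big|\sum_{i=1}^{m}\phi(\theta_{\pm i}x)-m\,\E\phi\big|\le\eps m+C_\eps$ and $\psi(\theta_lx)\le\eps|l|+C_\eps$ for all $m,l$, after which the $k$-th exponent is bounded by $-(\lambda-\hat G)k+3\eps(n+k)+3C_\eps$ and one sums geometrically once $3\eps<\tfrac12(\lambda-\hat G)$. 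One point you skip that the paper does not: the measurability of $b$ itself (the supremum over uncountably many $\epsilon$), which the paper settles by continuity of $\ltn x\rtn_{p{\rm-var},[s,t]}$ in $(s,t)$ plus uniform convergence of the series, reducing the supremum to rational $\epsilon$; your majorant $\tilde b$ sidesteps this for the pathwise bounds, but the measurability of $b$ (hence of $\hat b$) is still needed downstream to build the random absorbing set, so it should be recorded.
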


\begin{proof}
	From the definition of $H$ in \eqref{H}, by similar computations as in \eqref{bhat.temp.} using the integrability of $\ltn x\rtn_{p{\rm-var},[-1,1]}$  it is easy to prove that $\log H(x,[-1,1])$ is integrable, thus
	\begin{eqnarray*}
		 \limsup \limits_{n \to \infty}\frac{\log H(\theta_{-n}x,[-1,1])}{n} = 0\quad \text{a.s.}
	\end{eqnarray*}
	Hence, under condition \eqref{criterion}, there exists for each $0<2\delta < \lambda- \hat{G}$ an $n_0=n_0(\delta, x)$ such that for all $n\geq n_0$,
	\[
	e^{-\delta n}\leq  H(\theta_{-n}x,[-1,1])\leq e^{\delta n}.
	\]
	Consequently,
	\begin{eqnarray*}
		b( x)
		&\leq& \sum_{k=1}^{n_0-1}e^{-\lambda k} H(\theta_{-k}x,[-1,1])\sup_{\eps\in[0,1]} \prod_{j=1}^{k-1} \Big(1+M_1C_g G(\theta_{-j}x,[-\eps,1-\eps])\Big) + \sum_{k=n_0}^{\infty} e^{-(\lambda- 2\delta - \hat{G})k}  \notag\\
		&\leq&\sum_{k=1}^{n_0-1}e^{-\lambda k} H(\theta_{-k}x,[-1,1]) \prod_{j=1}^{k-1} \Big(1+M_1C_g G(\theta_{-j}x,[-1,1])\Big) +  \frac{e^{-(\lambda - 2\delta - \hat{G})n_0}}{1- e^{-(\lambda -2 \delta - \hat{G})}} 
	\end{eqnarray*}
	which is finite a.s. Moreover, for each fixed $x$, $\ltn x\rtn_{p{\rm-var},[s,t]}$ is continuous w.r.t $(s,t)$ on $\{(s,t) \in \R^2| s\leq t\}$ (see \cite[Proposition 5.8,\ p.\ 80]{friz}). Therefore, $G(x,[-\eps,1-\eps])$ and $H(x,[-\eps,1-\eps])$ are continuous functions of $\eps$. Since  $b(x)\leq b^*(x)$, the series 
	$$ \sum_{k=1}^{\infty}e^{-\lambda k}  H(\theta_{-k}x,[-\epsilon,1-\epsilon])\prod_{j=1}^{k-1} \Big[1+M_1C_gG( \theta_{-j}x,[-\epsilon,1-\epsilon])\Big]$$
	 converges uniformly w.r.t. $\epsilon \in [0,1]$, thus the series is also continuous w.r.t. $\eps \in [0,1]$. Hence the supremum in the definition of $b(x)$ in \eqref{bx} can be taken for rational $\epsilon$, which proves $b(x)$ to be a random variable on $\Omega$.
	 
	To prove the temperdness of $b$, observe that for each $t\in [n,n+1]$
	\begin{eqnarray*}
		b(\theta_tx)&=&b(\theta_n\theta_{t-n}x)\\
		&=&\sup_{\epsilon\in[0,1]} \sum_{k=1}^{\infty}e^{-\lambda k}  H(\theta_{-k+n}\theta_{-\epsilon+t-n}x,[0,1])\prod_{j=1}^{k-1} \Big[1+M_1C_gG( \theta_{-j+n}\theta_{-\epsilon+t-n}x,[0,1])\Big]\\
		&\leq &\sup_{\epsilon\in[-1,1]} \sum_{k=1}^{\infty}e^{-\lambda k}  H(\theta_{-k+n}x,[-\epsilon,1-\epsilon])\prod_{j=1}^{k-1} \Big[1+M_1C_gG( \theta_{-j+n}x,[-\epsilon,1-\epsilon])\Big]\\
		&\leq& \max\{b(\theta_nx),b(\theta_{n+1}x)\}.
	\end{eqnarray*}
	For $n> 0$,
	\allowdisplaybreaks
	\begin{eqnarray}\label{b1}
	b(\theta_{-n}x)&=& \sup_{\eps\in[0,1]} \sum_{k=1}^{\infty}e^{-\lambda  k}   H(\theta_{-(k+n)}x,[-\eps,1-\eps])\prod_{j=1}^{k-1} \Big[1+M_1C_gG( \theta_{-(j+n)}x,[-\eps,1-\eps])\Big]\notag \\
	&\leq& e^{\lambda  n} \sup_{\eps\in[0,1]} \sum_{k=1}^{\infty}e^{-\lambda  (n+k)}  H(\theta_{-(k+n)}x,[-\eps,1-\eps])\prod_{j=1}^{n+k-1} \Big[1+M_1C_gG( \theta_{-(j+n)}x,[-\eps,1-\eps])\Big]\notag \\
	&\leq&e^{\lambda  n}  b(x).\notag
	\end{eqnarray}
	Therefore 
	\[
	\displaystyle \limsup \limits_{n\to +\infty} \frac{\log b(\theta_{-n}x)}{n} \leq \lambda <\infty.
	\]
 Applying \cite[Proposition 4.1.3(i), p. 165]{arnold} we conclude that
 \[
	\displaystyle \limsup \limits_{t\to -\infty} \frac{\log b(\theta_tx)}{|t|}=\displaystyle \limsup \limits_{t\to +\infty} \frac{\log b(\theta_{t}x)}{t}=0,
	\]
i.e. $b$ is tempered.
\end{proof}	

\begin{corollary}\label{onepoint}
	Assume that $f(0) = g(0) = 0$ so that $y \equiv 0$ is a solution of \eqref{stochYDE}. Then under the assumptions of Theorem \ref{attractor} with condition \eqref{criterion}, the random attractor $\mathcal{A}(x)$ is the fixed point $ 0$.
\end{corollary}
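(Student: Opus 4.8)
The plan is to exploit the single structural simplification that the hypothesis $f(0)=g(0)=0$ produces: it kills the inhomogeneous constants in the master estimate \eqref{yfinal}, leaving a bound that is purely contractive in the pullback direction. First I would record the two elementary consequences of $f(0)=g(0)=0$. On the one hand, $y\equiv 0$ solves \eqref{stochYDE}, so $\varphi(t,x)0=0$ for every $t\ge 0$ and $x\in\Omega$, which makes $\{0\}$ an invariant random set. On the other hand, reading off the definitions \eqref{M0} and \eqref{M2} directly gives $M_0=0$ and $M_2=0$, since $M_2$ carries the factor $\max\{\|f(0)\|,\|g(0)\|\}$ and $M_0$ is built from $\|f(0)\|$ and $\|g(0)\|$.

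With $M_0=M_2=0$ the estimate \eqref{yfinal} obtained in the proof of Theorem \ref{attractor} collapses, for every $n\ge n_0(\delta,x)$ and every initial datum $y_0$, to
\[
\|y_n(\theta_{-n}x,y_0)\|\le C_A e^{\lambda_A}F(x,[-1,1])\,\|y_0\|\,e^{-(\lambda-\hat G-\delta)n}.
\]
The crucial point I would stress is that $n_0$ depends only on $x$ and $\delta$, not on $y_0$: it is extracted from the Birkhoff estimate \eqref{Gnew}, which is a statement about $x$ alone. Moreover $\lambda-\hat G-\delta>0$ by the criterion \eqref{criterion} together with the choice $0<\delta<\tfrac12(\lambda-\hat G)$, so the displayed bound is a genuine uniform contraction rate.

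To conclude $\mathcal{A}(x)=\{0\}$ I would argue through the invariance of the attractor. Since $\mathcal{A}\in\cD$, the attractor lies in a ball $B(0,\rho(x))$ with $\rho$ tempered, so that $\tfrac1n\log^+\rho(\theta_{-n}x)\to 0$. Fix $a\in\mathcal{A}(x)$; by the invariance relation $\varphi(n,\theta_{-n}x)\mathcal{A}(\theta_{-n}x)=\mathcal{A}(x)$ there is for each $n$ a point $a_n\in\mathcal{A}(\theta_{-n}x)$ with $\varphi(n,\theta_{-n}x)a_n=a$ and $\|a_n\|\le\rho(\theta_{-n}x)$. Applying the reduced bound with $y_0=a_n$ yields
\[
\|a\|\le C_A e^{\lambda_A}F(x,[-1,1])\,\rho(\theta_{-n}x)\,e^{-(\lambda-\hat G-\delta)n},\qquad n\ge n_0,
\]
whose right-hand side tends to $0$ while the left-hand side is independent of $n$; hence $\|a\|=0$ and $\mathcal{A}(x)\subseteq\{0\}$. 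For the reverse inclusion I would use that $\{0\}$ is invariant and tempered, so the attractor attracts it: since $\varphi(t,\theta_{-t}x)\{0\}=\{0\}$, the pullback attraction \eqref{pullback} gives $d(\{0\}\,|\,\mathcal{A}(x))=\lim_{t\to\infty}d(\varphi(t,\theta_{-t}x)\{0\}\,|\,\mathcal{A}(x))=0$, and compactness of $\mathcal{A}(x)$ then forces $0\in\mathcal{A}(x)$.

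The computation is essentially immediate once the constants are set to zero, so the only delicate point — and the step I would treat as the main obstacle — is the uniformity of the threshold $n_0$ in \eqref{yfinal} with respect to the initial condition. This uniformity is exactly what permits applying the single contraction rate $e^{-(\lambda-\hat G-\delta)n}$ to the pullback pre-images $a_n$, whose norms are controlled only through the tempered radius of the attractor; once it is in hand, the argument needs nothing beyond temperedness of $\mathcal{A}$ and the strict sign of $\lambda-\hat G$ guaranteed by \eqref{criterion}.
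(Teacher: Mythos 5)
Your proposal is correct and follows essentially the same route as the paper: both rest on observing that $f(0)=g(0)=0$ forces $M_0=M_2=0$, so the estimate \eqref{yfinal} collapses to a pure exponential pullback contraction toward $0$ (with $n_0$ depending only on $x$ and $\delta$ via \eqref{Gnew}). The paper concludes tersely that the trivial solution therefore attracts all tempered sets and hence is the attractor, while you reach the same conclusion by applying the contraction to pullback pre-images of points of $\mathcal{A}(x)$ and using its tempered radius; this is only a cosmetic difference in the wrap-up.
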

\begin{proof}
Using \eqref{yfinal} and the fact that $M_0=M_2 =0$ if $f(0) = g(0) =0$, we obtain
\begin{eqnarray}
\|y_t(\theta_{-t}x,y_0)\| \leq C_Ae^{\lambda_A}F(x,[-1,1])\|y_0(\theta_{-t}x)\|\exp\left\{-\left(\lambda-\hat{G} -\delta\right)n\right\}
\end{eqnarray}
for $t\in \Delta_n$.
It follows that all other solutions converge exponentially in the pullback sense to the trivial solution, which plays the role of the global pullback attractor. 
\end{proof}
\begin{remark}\label{comparison}
 In \cite{GAKLBSch2010} and \cite{ducGANSch18} the authors consider a Hilbert space $V$ together with a covariance operator $Q$ on $V$ such that $Q$ is of a trace-class, i.e. for a complete orthonormal basis $(e_i)_{i\in \N}$ of $V$, there exists a sequence of nonnegative numbers $(q_i)_{i \in \N}$  such that $\text{tr}(Q) :=\sum_{i=1}^\infty q_i < \infty$. A $V-$ valued fractional Brownian motion $B^H = \sum_{i=1}^\infty \sqrt{q_i} \beta_i^H e_i$ is then considered, where $(\beta^H_i)_{i \in \N}$ are stochastically independent scalar fractional Brownian motions of the same Hurst exponent $H$. The authors then develop the semigroup method to estimate the H\"older norm of $y$ on intervals $\tau_k, \tau_{k+1}$ where $\tau_k$ is a sequence of stopping times
\[
\tau_0 = 0, \tau_{k+1} - \tau_k + \ltn x \rtn_{\beta,[\tau_k,\tau_{k+1}]} = \mu
\]
for some $\mu \in (0,1)$ and $\beta > \frac{1}{p}$, which leads to the estimate of the exponent as
\begin{equation}
- \Big(\lambda_A - C(C_A,\mu) e^{\lambda_A \mu}\max\{C_f,C_g\}  \frac{n}{\tau_n}\Big)\tau_n,
\end{equation}
where $C(C_A,\mu)$ is a constant depending on $C_A, \mu$. It is then proved that there exists $\liminf \limits_{n \to \infty} \frac{\tau_n}{n} = \frac{1}{d}$, where $d = d(\mu, \text{tr}(Q))$ is a constant depending on the moment of the stochastic noise. As such the exponent is estimated as 
\begin{equation}\label{criterion2}
-\Big(\lambda_A - C(C_A,\mu) e^{\lambda_A \mu}\max\{C_f,C_g\} d(\mu,\text{tr}(Q))\Big). 
\end{equation}
However, it is technically required from the stopping time analysis (see \cite[Section 4]{ducGANSch18}) that the stochastic noise has to be small in the sense that the trace $\text{tr}(Q)=\sum_{i=1}^\infty q_i$ must be controlled as small as possible. In addition, in case the noise is diminished, i.e. $g \equiv 0$, \eqref{criterion2} reduces to a very rough criterion for exponential stability of the ordinary differential equation
\[
C_f \leq \frac{1}{C(C_A,\mu) d(\mu,\text{tr}(Q))} \lambda_A e^{-\lambda_A}.
\]
In contrast, our method uses the greedy time sequence in Theorem \ref{YDE}, so that later we can work with the simpler (regular) discretization scheme without constructing additional stopping time sequence. Also in Lemma \ref{ytest} we apply first the continuous Gronwall lemma in \eqref{yaux} in order to clear the role of the drift coefficient $f$. Then by using \eqref{estx} to give a direct estimate of $y_k$, we are able apply the discrete Gronwall Lemma directly and obtain a very explicit criterion. \\
The left and the right hand sides of criterion \eqref{criterion}
\[
\lambda_A - C_A C_f> C_A(1+|A|) e^{\lambda_A+2(|A|+C_f)} \Big\{\Big[2(K+1)C_g\Gamma(p)\Big]^p + \Big[2(K+1)C_g\Gamma(p)\Big]\Big\}
\]
can be interpreted as, respectively, the decay rate of the drift term and the intensity of the diffusion term, where the term $e^{\lambda_A+4(|A|+C_f)}$ is the unavoidable effect of the discretization scheme. Criterion \eqref{criterion} is therefore a better generalization of the classical criterion for stability of ordinary differential equations, and is satisfied if either $C_g$ or $\Gamma(p)$ is sufficiently small. In particular, when $g \equiv 0$, \eqref{criterion} reduces to $\lambda_A > C_A C_f$, which matches to the classical result.
\end{remark}

\subsection{Singleton attractors}\label{singletonsub}
In the rest of the paper, we would like to study sufficient conditions for the global attractor to consist of only one point, as seen, for instance, in Corrollary \ref{onepoint}. First, the answer is affirmative for $g$ of linear form, as proved in \cite{duchongcong18} for dissipative systems. Here we also present a similar version using the semigroup  method.

To begin, let $y^1,y^2$ be two solutions of \eqref{fSDE0} and assign $z_t=y^2_t-y^1_t$ for all $t\geq 0$. Similar to \eqref{variation}, $z$ satisfies
\begin{equation}\label{variationz}
z_t =\Phi (t-a)z_a + \int_a^t\Phi (t-s) \Big[f(z_s+y^1_s) - f(y^1_s)\Big]ds + \int_a^t\Phi (t-s) Q(s,z_s)d x_s,\quad \forall t\geq a,
\end{equation} 
where $Q(s,z_s) = g(z_s+y^1_s) - g(y^1_s)$. Observe that by similar computations to \eqref{int2}, it is easy to prove that
\begin{eqnarray}\label{estQ}
\Big\| \int_a^b \Phi(c-s)Q(s,z_s)dx_s \Big\| &\leq& K C_A (C_g \vee C_g^\prime) \Big[1+ |A|(b-a) \Big]e^{-\lambda_A (c-b)} \ltn x \rtn_{p{\rm - var},[a,b]} \times \notag\\
&& \times (1 + \ltn y^1 \rtn_{p{\rm - var},[a,b]}) \|z\|_{p{\rm - var},[a,b]}.
\end{eqnarray}
We need the following auxiliary result. 
\begin{lemma}\label{2sol}
	Assume that all the conditions in Theorem \ref{attractor} are satisfied. Let $y^1,y^2$ be two solutions of \eqref{fSDE0} and assign $z_t=y^2_t-y^1_t$ for all $t\geq 0$. \\
$(i)$ If $D_g$  is of Lipschitz continuity with Lipschitz constant $C'_g$, then
	\begin{equation}\label{zt2a}
	e^{\lambda n} \|z_{n}\|\leq C_A\|z_0\| + e^{\lambda_A }KC_A(1+|A|)(C_g\vee C'_g) \sum_{k=0}^{n-1} \ltn  x\rtn_{p{\rm -var},\Delta_k}e^{\lambda k}   \Big(1+\ltn y^1\rtn_{{p\rm-var},\Delta_k}\Big) \|  z\|_{p{\rm -var},\Delta_k}.
	\end{equation}
$(ii) $ If  $g(y) = C y + g(0)$ then	
	\begin{eqnarray}\label{zt2*}
	e^{\lambda n} \|z_{n}\|\leq C_A\|z_0\| + e^{\lambda_A }KC_A(1+|A|)C_g \sum_{k=0}^{n-1} \ltn  x\rtn_{p{\rm -var},\Delta_k}e^{\lambda k}    \|  z\|_{p{\rm -var},\Delta_k}.
	\end{eqnarray}
\end{lemma}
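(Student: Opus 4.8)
The plan is to mirror the proof of Lemma \ref{ytest} almost verbatim, the only structural simplification being that the difference equation \eqref{variationz} carries no constant (inhomogeneous) term. I would start from \eqref{variationz} with $a=0$, bound the drift difference by the global Lipschitz continuity of $f$, namely $\|f(z_s+y^1_s)-f(y^1_s)\|\leq C_f\|z_s\|$, and write $\gamma_t:=\Big\|\int_0^t\Phi(t-s)Q(s,z_s)dx_s\Big\|$ for the stochastic term. Using \eqref{estphi1}, this yields
\[
\|z_t\|\leq C_A e^{-\lambda_A t}\|z_0\|+C_AC_f\int_0^t e^{-\lambda_A(t-s)}\|z_s\|\,ds+\gamma_t.
\]

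Next I would multiply by $e^{\lambda_A t}$, apply the continuous Gronwall lemma, and then multiply by $e^{-L_f t}$ exactly as in the derivation of \eqref{yaux}. Because the $f(0)$ term is absent, the boundary contribution $C_A\|z_0\|e^{-L_f t}$ recombines with $\int_0^t L_f e^{-L_f s}C_A\|z_0\|\,ds$ into a single $C_A\|z_0\|$, and the resulting inequality is the clean
\[
\|z_t\|e^{\lambda t}\leq C_A\|z_0\|+\gamma_t e^{\lambda t}+\int_0^t L_f\gamma_s e^{\lambda s}\,ds,\qquad t\in[n,n+1).
\]
To control the right-hand side I would estimate $\gamma_s e^{\lambda s}$ by splitting the integral over the unit intervals $\Delta_k=[k,k+1]$ and applying \eqref{estQ} on each, in direct analogy with \eqref{int210}; this produces a bound of the form $\sum_{k=0}^{\lfloor s\rfloor}e^{\lambda_A}e^{\lambda k}e^{-L_f(s-k)}q(\Delta_k)$, where $q([a,b]):=KC_A(C_g\vee C'_g)[1+|A|(b-a)]\ltn x\rtn_{p{\rm-var},[a,b]}(1+\ltn y^1\rtn_{p{\rm-var},[a,b]})\|z\|_{p{\rm-var},[a,b]}$ plays the role of $p([a,b])$. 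Substituting this into the Gronwall output and collapsing the boundary term against the integral via the identity \eqref{intequal} then gives \eqref{zt2a}, after sending $t\to n+1$ by continuity of $z$.

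For part $(ii)$ the only change is in the estimate of the stochastic integral. When $g(y)=Cy+g(0)$ is affine one has $Q(s,z_s)=Cz_s$, so Lemma \ref{Q}$(ii)$ gives $\ltn Q\rtn_{p{\rm-var},[u,v]}\leq C_g\ltn z\rtn_{p{\rm-var},[u,v]}$ with no dependence on $y^1$ and no need for $C'_g$. Repeating the computation of Proposition \ref{YDEg} with $Q$ in place of $g(y)$ then removes both the factor $(1+\ltn y^1\rtn_{p{\rm-var}})$ and the constant $C'_g$, so that $q(\Delta_k)$ is replaced by $KC_AC_g[1+|A|(b-a)]\ltn x\rtn_{p{\rm-var},[a,b]}\|z\|_{p{\rm-var},[a,b]}$, and the same argument yields the sharper \eqref{zt2*}.

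I do not expect any genuine obstacle: every ingredient — the variation-of-constants form \eqref{variationz}, the per-interval integral bound \eqref{estQ}, the two Gronwall steps, and the telescoping identity \eqref{intequal} — is already in hand, and the homogeneous structure of \eqref{variationz} actually makes the bookkeeping lighter than in Lemma \ref{ytest}. The only point requiring care is the uniform handling of the two sized-delimiter estimates (the drift bound and the stochastic-integral bound), so that the factor $(1+\ltn y^1\rtn_{p{\rm-var},\Delta_k})$ appears in $(i)$ but is correctly suppressed in $(ii)$ through Lemma \ref{Q}$(ii)$.
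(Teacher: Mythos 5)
Your proposal is correct and follows essentially the same route as the paper, which itself only sketches this proof by instructing the reader to repeat the argument of Lemma \ref{ytest} with the stochastic term $\beta_t=\|\int_0^t\Phi(t-s)Q(s,z_s)dx_s\|$ bounded via \eqref{estQ} (i.e.\ Lemma \ref{Q} combined with Proposition \ref{YDEg}), followed by the two Gronwall steps and the identity \eqref{intequal}. Your handling of the homogeneous drift term and the distinction between cases $(i)$ and $(ii)$ via Lemma \ref{Q}$(ii)$ matches the intended argument.
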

\begin{proof} 
	The arguments follow the proof of Lemma \ref{ytest} step by step, and apply Lemma \ref{Q}, Proposition \ref{YDEg} to obtain the estimate
	\[
		e^{\lambda_A t} \|z_t\|\leq C_A\|z_0\| + e^{\lambda_A t}\beta_t + L_f \int_0^t \Big(\|z_s\|+e^{\lambda_A s}\beta_s\Big) e^{L_f(t-s)}ds,
	\]
where $\beta_t= \|\int_0^t \Phi(t-s)Q(s,z_s)dx_s\|$ is estimated using \eqref{estQ}. The rest will be omitted. 
\end{proof}

\begin{theorem}\label{linear}
	Assume that $g(y) = Cy+g(0)$ is a linear map so that $C_g=|C|$. Then under the condition \eqref{criterion}, 
\[
	\lambda_A - C_A C_f> C_A(1+|A|) e^{\lambda_A+2(|A|+C_f)} \Big\{\Big[2(K+1)C_g\Gamma(p)\Big]^p + \Big[2(K+1)C_g\Gamma(p)\Big]\Big\},
\]
	the pullback attractor is a singleton, i.e. $\mathcal{A}(x) = \{a(x)\}$ almost surely. Moreover, it is also a forward singleton attractor. 
\end{theorem}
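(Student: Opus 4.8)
The plan is to deduce both the singleton property of $\mathcal{A}(x)$ and the forward attraction from one homogeneous contraction estimate on the difference $z_t=y^2_t-y^1_t$ of two solutions. The crucial structural fact in the linear case is that the recursion in Lemma \ref{2sol}(ii) carries \emph{no} inhomogeneous term: unlike \eqref{xn}, the constants $\|f(0)\|,\|g(0)\|$ cancel in the difference, so the estimate is purely homogeneous in $\|z_0\|$. First I would feed the linear $p$-variation bound \eqref{z.lin} of Corollary \ref{2sol1}(ii), namely $\|z\|_{p{\rm-var},\Delta_k}\leq \|z_k\|e^{\alpha N_{\Delta_k}(x)+2L}$, into \eqref{zt2*}. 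Setting $u_k:=e^{\lambda k}\|z_k\|$ and noting that $\ltn x\rtn_{p{\rm-var},\Delta_k}e^{\alpha N_{\Delta_k}(x)+2L}\leq G(x,\Delta_k)$ by \eqref{Nest}, this yields
\begin{equation*}
u_n\leq C_A\|z_0\|+M_1C_g\sum_{k=0}^{n-1}G(x,\Delta_k)\,u_k,
\end{equation*}
to which the discrete Gronwall Lemma \ref{gronwall} applies with vanishing additive part, giving the pure product bound
\begin{equation*}
\|z_n\|\leq C_A\|z_0\|e^{-\lambda n}\prod_{k=0}^{n-1}\Big[1+M_1C_gG(x,\Delta_k)\Big].
\end{equation*}

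For the pullback singleton I would replace $x$ by $\theta_{-t}x$ exactly as in Step 2 of the proof of Theorem \ref{attractor}, turning the product into $\sup_{\eps\in[0,1]}\prod_{k=1}^n[1+M_1C_gG(\theta_{-k}x,[-\eps,1-\eps])]$ up to a boundary factor $F(x,[-1,1])$. The growth-rate estimate \eqref{Gnew} then bounds this product by $e^{(\hat{G}+\delta)n}$ for a.s. all $x$ and any small $\delta>0$, where $\hat{G}$ is the right-hand side of \eqref{criterion}. Taking $a_1(x),a_2(x)\in\mathcal{A}(x)$ and using invariance to write them as the time-$n$ images of $a_i(\theta_{-n}x)\in\mathcal{A}(\theta_{-n}x)$, the attractor being tempered makes $\|a_1(\theta_{-n}x)-a_2(\theta_{-n}x)\|$ subexponential; hence
\begin{equation*}
\|a_1(x)-a_2(x)\|\leq C_A\,\|z_0(\theta_{-n}x)\|\,F(x,[-1,1])\,e^{-(\lambda-\hat{G}-\delta)n}\longrightarrow 0
\end{equation*}
as $n\to\infty$, because \eqref{criterion} is precisely the inequality $\lambda>\hat{G}$. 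Thus $\mathcal{A}(x)$ collapses to a single point $a(x)$.

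For the forward statement I would use the homogeneous bound directly with $\varphi(t,x)$: by the helix property $G(x,\Delta_k)=G(\theta_kx,[0,1])$, so the product becomes $\prod_{k=0}^{n-1}[1+M_1C_gG(\theta_kx,[0,1])]$. Since $(\Omega,\mathcal{F},\bP,\theta)$ is ergodic, the forward Birkhoff average of $\ltn\theta_kx\rtn^p_{p{\rm-var},[0,1]}$ converges to the same mean $\Gamma(p)^p$ used in \eqref{Gnew}, so the identical computation bounds the forward growth rate of the product by $\hat{G}$. Over any bounded tempered initial set $\hat{D}(x)$ the norm $\|z_0\|$ is uniformly bounded for fixed $x$, whence $\|z_n(x)\|\leq C_A\|z_0\|e^{-(\lambda-\hat{G}-\delta)n}\to0$ uniformly; identifying $y^1=\varphi(\cdot,x)a(x)$ and using $a(\theta_tx)=\varphi(t,x)a(x)$ gives $d(\varphi(t,x)\hat{D}(x)\,|\,\mathcal{A}(\theta_tx))\to0$, which is the forward attraction \eqref{forward}.

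The main obstacle is the forward direction. The pullback argument reuses the machinery of Theorem \ref{attractor} almost verbatim, but the forward rate bound cannot lean on temperedness of the initial data and must instead invoke the forward ergodic theorem to match the rate $\hat{G}$ obtained from the backward average in \eqref{Gnew}; checking that the fractional-time boundary factor $F(x,[-1,1])$ and the $\sup_{\eps\in[0,1]}$ do not corrupt this exponential rate is the delicate point.
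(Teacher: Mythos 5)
Your proposal is correct and follows essentially the same route as the paper: the homogeneous difference estimate from Lemma \ref{2sol}(ii) combined with Corollary \ref{2sol1}(ii), the discrete Gronwall lemma, the rate bound \eqref{Gnew} together with invariance and temperedness of $\mathcal{A}$ for the pullback singleton, and the forward Birkhoff average for forward attraction. The paper compresses the forward direction into the remark that one repeats the argument with $x^*$ replaced by $x$; your elaboration of that step (forward ergodic averages giving the same rate $\hat{G}$) is exactly what is intended.
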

\begin{proof}
	The existence of the pullback attractor $\mathcal{A}$ is followed by Theorem \ref{attractor}. Take any two points $a_1,a_2\in \mathcal{A}(x) $. For a given $n \in \N$, assign $x^*:=\theta_{-n}x$  and consider the equation
	\begin{equation}\label{equ.star1}
	dy_t = [Ay_t + f(y_t)]dt + g(y_t)dx^*_t.
	\end{equation}
	Due to the invariance of $\mathcal{A}$ under the flow, there exist $b_1,b_2\in \mathcal{A}(x^*)$ such that $a_i=y_n(x^*,b_i)$.
	Put $z_t= z_t(x^*):= y_t(x^*,b_1)- y_t(x^*,b_2)$ then $z_n(x^*) =a_1- a_2$. By applying Lemma \ref{2sol} with $x$ replaced by $x^*$, and using Lemma \ref{Q} (ii), we can rewrite the estimates in \eqref{zt2*} as
	\begin{eqnarray}\label{zt2b1}
	e^{\lambda n} \|z_n\|&\leq &C_A\|z_0\| +M_1C_g \sum_{k=0}^{n-1} \ltn  x^*\rtn_{p{\rm -var},\Delta_k}e^{\lambda k}    \|  z\|_{p{\rm -var},\Delta_k}.
	\end{eqnarray}
	Meanwhile, using Corollary \ref{2sol1}$(ii)$ and Remark \ref{ysummary}, with $M_0$ in \eqref{M0} now equal to zero, we obtain
\begin{equation*}
\| z \|_{p{\rm -var},[a,b]} \leq \|z_a\|\Lambda_1(x^*,[a,b]), 
\end{equation*}
in which $\Lambda_1$ defined in \eqref{Lambda}.
As a result, \eqref{zt2b1} has the form
\begin{eqnarray*}
e^{\lambda n} \|z_n\|&\leq & 
C_A\|z_0\| +M_1C_g \sum_{k=0}^{n-1}  \ltn x^*\rtn_{p{\rm-var},[0,1]}\Lambda_1(x^*,[0,1]) e^{\lambda k}  \|z_k\|\\
&\leq&C_A\|z_0\| + M_1C_g\sum_{k=0}^{n-1}G(x^*,\Delta_k)e^{\lambda k} \|  z_k\|,
\end{eqnarray*}
where $G$ is defined in \eqref{G}.
Now applying the discrete Gronwall lemma \ref{gronwall}, we conclude that
\begin{eqnarray}\label{zn}
e^{\lambda n} \|z_n\|&\leq &C_A\|z_0\| \prod_{k=0}^{n-1}[1+M_1C_gG(x^*,\Delta_k)].
\end{eqnarray}
Similar to \eqref{Gnew}
\begin{eqnarray*}
&&\log (1+M_1C_gG(x^*,\Delta_k))\\
&\leq & C_Ae^{\lambda_A+2L}(1+|A|) \Big[\Big(2(K+1)C_g \ltn \theta_{k-n}x\rtn_{p{\rm-var},[0,1]} +\Big(2(K+1)C_g \ltn \theta_{k-n}x\rtn_{p{\rm-var},[0,1]}\Big)^p \Big].
\end{eqnarray*}
Therefore it follows from \eqref{zn} that
\allowdisplaybreaks
\begin{eqnarray*}\label{zlim}
\limsup \limits_{n \to \infty} \frac{1}{n}\log \|z_n\| &\leq &-\lambda + \limsup \limits_{n \to \infty} \frac{1}{n} \sum_{k=0}^{n-1} \log [1+M_1C_gG(x^*,\Delta_k)] \notag\\
&\leq & -\lambda +C_Ae^{\lambda_A+2L}(1+|A|)\Big[ 2(K+1)C_g\Gamma(p) + \Big(2(K+1)C_g\Big)^p\Gamma(p)^p\Big] \notag \\
&<&0 
\end{eqnarray*}
under the condition \eqref{criterion}. This follows that $\lim_{n\to\infty} \|a_1-a_2\|=0$ and $\mathcal{A}$ is a one point set almost surely. Similar arguments in the forward direction ($x^*$ is replaced by $x$) also prove that $\mathcal{A}$ is a forward singleton attractor almost surely.
\end{proof}

\begin{remark}\label{linearrem}
	As pointed out in the Introduction section, if we use the conjugacy transformation (developed in \cite{Sus78}, \cite{ImkSchm01}, \cite{KelSchm98}) of the form $y_t = e^{C \eta_t} z_t$, where the semigroup $e^{C t}$ is generated by the equation
	$\dot{u} = C u$ and $\eta$ is the unique stationary solution of the Langevin equation $d\eta = -\eta dt + dZ_t$ (with $Z$ is a scalar stochastic process), then the transformed system has the form
\[
	\dot{z}_t = e^{-C \eta_t} \Big[ A e^{C \eta_t} z_t + f(e^{C \eta_t} z_t) + \eta_t C e^{C \eta_t} z_t  \Big].
\]
	However, even in the simplest case $f \equiv 0$, there is no effective method to study the asymptotic stability of the non-autonomous linear stochastic system
	\begin{equation}\label{linconjugacy}
	\dot{z}_t = \Big( e^{-C \eta_t} A e^{C \eta_t} + \eta_t C \Big)z_t.
	\end{equation}
	An exception is when $A$ and $C$ are commute, since we could reduce system \eqref{linconjugacy} in the form
	\[
	\dot{z}_t = \Big(A+ \eta_t C \Big)z_t,
	\]
	thereby solve it explicitly as
	\[
	z_t = z_0 \exp \{A t + C \int_0^t \eta_s ds \} = z_0 \exp \{At - C(\eta_t - \eta_0 - Z_t + Z_0)  \}.
	\]
	In this case, the exponential stability is proved using the fact that $\exp \{- C(\eta_t - \eta_0 - Z_t + Z_0)  \}$ is tempered. However, since $A$ and $C$ are in general not commute, we can not apply the conjugacy transformation but should instead use our method described in Theorems \ref{attractor} and \ref{linear}.	
\end{remark}	

Next, motivated by \cite{hairer07}, we consider the case in which $g \in C^2_b$ and $C_g$ is also the Lipschitz constant of $Dg$. Notice that our conditions for $A$ and $f$ can be compared similarly to the dissipativity condition in \cite{hairer07}. However, unlike the probabilistic conclusion of existence and uniqueness of a stationary measure in \cite{hairer07}, we go a further step by proving that for $C_g$ small enough, the random attractor is indeed a singleton, thus the convergence to the attractor is in the pathwise sense and of exponential rate.

\begin{theorem}\label{gbounded}
	Assume that $g \in C^2_b$ with $\|g\|_\infty < \infty$, and denote by $C_g$ the Lipschitz constant of $g$ and $Dg$. Assume further that $\lambda_A>C_AC_f$ and
	\begin{equation}\label{newGamma}
	E \ltn Z \rtn^{\frac{4p(p+1)}{p-1}}_{p{\rm -var},[-r,r]}< \infty, \quad \forall r >0. 
	\end{equation}
	Then system \eqref{fSDE0} possesses a pullback attractor. Moreover, there exists a $\delta>0$ small enough such that for any $C_g\leq \delta$ the attractor is a singleton almost surely, thus the pathwise convergence is in both the pullback and forward directions.
\end{theorem}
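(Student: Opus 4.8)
The plan is to treat the two assertions separately, deriving existence from Theorem \ref{attractor} and the singleton property by an adaptation of the argument in Theorem \ref{linear}. For existence, note that the right--hand side $\hat G$ of \eqref{criterion} (see \eqref{Gmu}) is a continuous increasing function of $C_g$ that vanishes as $C_g\to 0$, while the left--hand side $\lambda_A-C_AC_f$ is a fixed positive constant by the standing hypothesis $\lambda_A>C_AC_f$. Since \eqref{newGamma} is strictly stronger than \eqref{Gamma}, we have $\Gamma(p)<\infty$; hence there is a threshold $\delta_0>0$ so that \eqref{criterion} holds for every $C_g\leq\delta_0$, and Theorem \ref{attractor} provides the pullback attractor $\mathcal{A}(x)$.

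For the singleton claim, fix $a_1,a_2\in\mathcal{A}(x)$, set $x^*:=\theta_{-n}x$, and use the invariance of $\mathcal{A}$ to write $a_i=y_n(x^*,b_i)$ with $b_i\in\mathcal{A}(x^*)$; put $z_t:=y_t(x^*,b_1)-y_t(x^*,b_2)$ so that $z_n=a_1-a_2$. Applying Lemma \ref{2sol}$(i)$ with $C'_g=C_g$ (as $C_g$ is also the Lipschitz constant of $Dg$) and then inserting the difference estimate $\|z\|_{p{\rm -var},\Delta_k}\leq\|z_k\|\,(N'_{\Delta_k})^{\frac{p-1}{p}}2^{N'_{\Delta_k}}e^{2L}$ from Corollary \ref{2sol1}$(i)$ turns the bound into a closed recursion
\[
e^{\lambda n}\|z_n\|\leq C_A\|z_0\|+\sum_{k=0}^{n-1}\tilde G(x^*,\Delta_k)\,e^{\lambda k}\|z_k\|,\qquad \tilde G(x^*,\Delta_k)=M_1C_g\,\ltn x^*\rtn_{p{\rm -var},\Delta_k}\big(1+\ltn y^1\rtn_{p{\rm -var},\Delta_k}\big)(N'_{\Delta_k})^{\frac{p-1}{p}}2^{N'_{\Delta_k}}e^{2L}.
\]
The discrete Gronwall Lemma \ref{gronwall} then yields $\|z_n\|\leq C_A\|z_0\|\,e^{-\lambda n}\prod_{k=0}^{n-1}\big[1+\tilde G(x^*,\Delta_k)\big]$. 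After bounding $\ltn y^1\rtn_{\Delta_k}$ from above by a function of the shifted noise $\theta_kx^*$ alone (see the next paragraph), the weights become a stationary sequence, and the Birkhoff ergodic theorem gives
\[
\limsup_{n\to\infty}\tfrac1n\log\|z_n\|\leq-\lambda+E\big[\log\big(1+\bar G(x)\big)\big],
\]
where $\bar G$ is the resulting noise-only upper bound for $\tilde G(x,[0,1])$; it then suffices to make this expectation finite and smaller than $\lambda$.

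The heart of the matter, and the step I expect to be hardest, is this integrability, where both the boundedness of $g$ and the high moment \eqref{newGamma} are used. The dangerous factor is $2^{N'_{\Delta_k}}$: since $\log(1+\tilde G)\gtrsim N'\log 2$ on $\{\tilde G\geq 1\}$ and, by \eqref{Nprime}, $N'_{\Delta_k}\leq 1+[2(K+1)C_g]^p\ltn x\rtn^p_{p{\rm -var},\Delta_k}\big(1+\ltn y^1\rtn_{p{\rm -var},\Delta_k}\big)^p$, the binding requirement is $E\big[\ltn x\rtn^p_{p{\rm -var},[0,1]}(1+\ltn y^1\rtn_{p{\rm -var},[0,1]})^p\big]<\infty$. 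Because $g$ is bounded, the solution estimates of Corollary \ref{colnew} and Corollary \ref{h} are only polynomial in $\ltn x\rtn$ — the exponential factor $e^{\alpha N}$ of the general bound \eqref{estx} is absent — and the fractional exponent $\beta=\tfrac1p$ of Corollary \ref{h} lets me control $\ltn y^1\rtn_{\Delta_k}$ by $\ltn x\rtn^{2p}_{\Delta_k}$ times only the fractional power $\|y^1_k\|^{1/p}$ of the solution norm. Replacing $\|y^1_k\|$ by the (now polynomial) absorbing radius of the bounded-$g$ system renders $N'_{\Delta_k}$ dominated by a fixed power of $\ltn x\rtn_{p{\rm -var},\Delta_k}$; a H\"older splitting of the ensuing product of $p$-variation norms against the solution moments shows that the exponent $\frac{4p(p+1)}{p-1}$ in \eqref{newGamma} is precisely what forces $E[\log(1+\bar G)]<\infty$, uniformly for $C_g$ near $0$.

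With this in hand, dominated convergence gives $E[\log(1+\bar G)]\to 0$ as $C_g\to 0$ (each factor tends to $0$ because $M_1C_g\to0$ and $N'_{\Delta_k}\to 1$), so there is $\delta\in(0,\delta_0]$ with $E[\log(1+\bar G)]<\lambda$ whenever $C_g\leq\delta$; then $\|z_n\|\to 0$ exponentially, forcing $a_1=a_2$ and hence $\mathcal{A}(x)=\{a(x)\}$ almost surely. Rerunning the identical estimate with $x^*$ replaced by $x$, exactly as in the forward part of Theorem \ref{linear}, yields pathwise exponential convergence in the forward direction as well. I expect the moment bookkeeping extracting the sharp exponent $\frac{4p(p+1)}{p-1}$ to be the only genuinely delicate point; the remainder is a routine adaptation of the linear case.
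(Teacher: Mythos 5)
Your singleton argument (Lemma \ref{2sol}$(i)$ plus Corollary \ref{2sol1}$(i)$, discrete Gronwall, Birkhoff, dominated convergence as $C_g\to 0$) is essentially the paper's Step 3, and you correctly identify the dangerous factor $2^{N'_{\Delta_k}}$ and the role of \eqref{newGamma}. But there is a genuine gap at the point you yourself call the heart of the matter: the ``(now polynomial) absorbing radius of the bounded-$g$ system'' that you substitute for $\|y^1_k\|$ is asserted, never constructed. It does not come from Theorem \ref{attractor}: the absorbing radius $\hat b(x)$ produced there (see \eqref{bhat}) contains $F(x,[-1,1])$ and $b(x)$, which are \emph{exponentials} of $\ltn x\rtn^p_{p{\rm -var},[-1,1]}$, and under the purely polynomial moment hypothesis \eqref{newGamma} nothing guarantees that $\E\,\hat b^{2p}<\infty$. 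Without an absorbing radius with finite $2p$-moment, the integrability of your $\log(1+\bar G)$ and the dominated-convergence step both collapse. The paper supplies exactly this missing ingredient in its Steps 1--2: using Corollary \ref{h} with the fractional exponent $\beta=1/p<1$ and a Young-inequality absorption of the resulting $\|y_0\|^{2p\beta}$ term, it proves the dissipativity inequality $\|y_r\|^{2p}\le\eta\|y_0\|^{2p}+\xi_r(x)$ with $\eta<1$ and $\xi_r$ integrable (this is where the exponent $\tfrac{4p(p+1)}{p-1}$ is actually consumed), then iterates to get an absorbing radius $R_r(x)^{1/(2p)}$ with $\E R_r<\infty$. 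You name the right tools but skip the construction, which is the only nontrivial new estimate in the proof.

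A second, related discrepancy: the theorem asserts existence of the pullback attractor under $\lambda_A>C_AC_f$, boundedness of $g$, and \eqref{newGamma} alone — with no smallness of $C_g$. Your route through criterion \eqref{criterion} and Theorem \ref{attractor} only yields existence for $C_g\le\delta_0$, i.e.\ a strictly weaker statement. The paper's dissipativity argument gives existence for every $C_g$ precisely because $g$ is bounded (the noise contribution to $\|y_r\|$ grows only like $\|y_0\|^{1/p}$, so it cannot defeat the linear contraction $C_Ae^{-\lambda r}\|y_0\|$ no matter how large $C_g$ is). So the dissipativity step is not optional bookkeeping: it is needed both for the existence claim as stated and for the integrable absorbing radius your singleton argument relies on.
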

\begin{proof}
	{\bf Step 1.} Similar to \cite[Proposition 4.6]{hairer07}, we prove that there exist a time $r>0$, a constant $\eta  \in (0,1)$, and an integrable random variable $\xi_r(x)$ such that
	\begin{equation}\label{ydissipative}
	\|y_r(x,y_0)\|^{p} \leq \eta \|y_0\|^{p} + \xi_r(x).
	\end{equation}
	First we fix $r>0$ and consider $\mu,h$ as defined in Lemma \ref{h} on $[0,r]$, i.e $\mu_t$ is the solution of the deterministic system $\dot{\mu} = A\mu + f(\mu)$ which starts at $\mu_0 = y_0$ and $h_t=y_t-\mu_t$.  Then using \eqref{yaux} 
\[
	\|\mu_r\| \leq C_A \|y_0\| e^{-\lambda r} + C_A \frac{\|f(0)\|}{\lambda}. 
\]
	On the other hand, due to \eqref{hinf} in Corollary \ref{h} and \eqref{Nest},
\[
\|h_r\|\leq\|h\|_{\infty,[0,r]} \leq \xi_0 (x) (1+  \|y_0\|^\beta),
\]
where  $\beta = \frac{1}{p}$ and $\xi_0$ is a polynomial of $\ltn x\rtn_{p{\rm-var},[0,r]}$ of the form
	\[
	\xi_0(x) = D \ltn x\rtn_{p{\rm-var},[0,r]}\Big(1+\ltn x\rtn^p_{p{\rm-var},[0,r]} \Big),
	\]
	where $D$ is a constant.

	 Now for $\epsilon>0$ small enough, we apply the convex inequality and Young inequality to conclude that 
	 \allowdisplaybreaks
	\begin{eqnarray}\label{hest3}
	\|y_r\|^{2p} \leq (\|h_r\| + \|\mu_r\|)^{2p} \leq (1+\epsilon)^{2(2p-1)} \Big[(C_Ae^{-\lambda r})^{2p}  + \epsilon \beta \Big] \|y_0\|^{2p} + \xi_r(x),
	\end{eqnarray}
	where 
	\[
	\xi_r(x) = \xi_r \Big(\frac{1}{\epsilon},x\Big) \leq  \frac{D}{\epsilon^{2p-1+\frac{2p}{p-1}}}\Big( 1 +  \ltn x\rtn_{p{\rm-var},[0,r]}^{\frac{2p^2(p+1)}{p-1}} \Big) 
	\]
	for some generic constant $D$ (depends on $r$). Thus $\xi_r$ is integrable due to $\frac{2p^2(p+1)}{p-1} \leq \frac{4p(p+1)}{p-1}$ and \eqref{newGamma}. By choosing $r >0$ large enough and $\epsilon \in (0,1)$ small enough such that  
	\[
	C_Ae^{-\lambda r} < 1 \quad \text{and} \quad \eta : = (1+\epsilon)^{2(2p-1)} \Big[(C_Ae^{-\lambda r})^{2p}  + \epsilon \beta \Big]<1
	\]
	we obtain \eqref{ydissipative}.
	
	{\bf Step 2.} Next, for simplicity we only estimate $y$ at the discrete times $nr$ for $n \in \N$, the estimate for $t \in [nr,(n+1)r]$ is similar to \eqref{y0}. From \eqref{ydissipative}, it is easy to prove by induction that
\[
\|y_{nr}(x,y_0)\|^{2p} \leq \eta^n \|y_0\|^{2p} + \sum_{i = 0}^{n-1} \eta^i \xi_r(\theta_{(n-i)r}x), \quad \forall n \geq 1;
\]
thus for $n$ large enough 
\[
\|y_{nr}(\theta_{-nr}x,y_0)\|^{2p} \leq \eta^n \|y_0\|^{2p} + \sum_{i = 0}^n \eta^i \xi_r(\theta_{-ir}x) \leq 1 + \sum_{i =0}^\infty \eta^i \xi_r(\theta_{-ir}x) =: R_r(x).
\]
In this case we could choose $\hat{b}(x)$ in \eqref{bhat} to be $\hat{b}(x)=R_r(x)^{\frac{1}{2p}}$ so that there exists a pullback absorbing set $\cB(x) = B(0,\hat{b}(x))$ containing our random attractor $\mathcal{A}(x)$. Moreover, due to the integrability of $\xi_r(x)$, $R_r(x)$ is also integrable with $\E R_r = 1 + \frac{\E \xi_r}{1-\eta}$.

	{\bf Step 3.} Now back to the arguments in the proof of Theorem \ref{linear} and note that $Dg$ is also globally Lipschitz with the same constant $C_g$. Using Lemma \ref{Q} $(i)$ and rewriting \eqref{zt2a} in Lemma \ref{2sol} for $x^*$ yields
	\begin{equation}\label{zt2b}
	e^{\lambda n} \|z_{n}\|\leq C_A\|z_0\| + M_1C_g \sum_{k=0}^{n-1} \ltn  x^*\rtn_{p{\rm -var},\Delta_k}e^{\lambda k}   \Big(1+\ltn y^1\rtn_{{p\rm-var},\Delta_k}\Big) \|  z\|_{p{\rm -var},\Delta_k}
	\end{equation}
where the $p$-variation norm of $z$ can be estimated, due to Corollary \ref{2sol1}$(i)$, as 
	\begin{eqnarray}
	\| z \|_{p{\rm -var},\Delta_k} &\leq& \Big(N^\prime_{\Delta_k}(x^*)\Big)^{\frac{p-1}{p}} 2^{N^\prime_{\Delta_k}(x^*)}e^{2L }\|z_a\|,\\
\text{with}\qquad \qquad N^\prime_{\Delta_k}(x^*) &\leq& 1+[2(K+1)C_g]^p \ltn x^* \rtn_{p{\rm -var},\Delta_k}^p(1+ \ltn y^1 \rtn_{p{\rm -var},\Delta_k})^p.\label{Nprime2}
	\end{eqnarray}
	This together with \eqref{zt2b} derives 
\begin{eqnarray}
e^{\lambda n} \|z_{n}\|&\leq& C_A \|z_0\| +  \sum_{k=0}^{n-1} I_k e^{\lambda k} \|z_{k}\| \label{zest2} \\
\text{where}\quad I_k &=& M_1C_g \ltn  x^*\rtn_{p{\rm -var},\Delta_k} \Big(1+\ltn y^1\rtn_{{p\rm-var},\Delta_k}\Big)\Big(N^\prime_{\Delta_k}(x^*)\Big)^{\frac{p-1}{p}} 2^{N^\prime_{\Delta_k}(x^*)}e^{2L }.\notag
\end{eqnarray}
By applying the discrete Gronwall lemma \ref{gronwall}, we obtain
\begin{eqnarray}\label{logz}
\|z_{n}\|&\leq &C_A\|z_0\|  e^{-\lambda n}\prod_{k=0}^{n-1} (1+I_k) \notag\\
\text{which yields}\qquad \frac{1}{n}\log \|z_{n}\|&\leq & \frac{1}{n}\log \Big(2C_A \hat{b}(x^*)\Big)  -\lambda  + \frac{1}{n}\sum_{k=0}^{n-1} \log (1+I_k).
\end{eqnarray}
On the other hand, due to \eqref{Nest}, \eqref{esty2} and \eqref{bhat}, it is easy to prove with a generic constant $D$ that
\[
		\ltn y^1 \rtn_{p{\rm -var},\Delta_k} \leq D\Big(1+\ltn \theta_{(k-n)}x \rtn^{2p-1}_{p{\rm -var},[0,1]}\Big)  \Big( 1+ \hat{b}(\theta_{(k-n)}x) + \ltn  \theta_{(k-n)}x \rtn_{p{\rm -var},[0,1]} \Big),
\]
thus	
\begin{eqnarray*}
&&	[2(K+1)C_g] \ltn x^* \rtn_{p{\rm -var},\Delta_k}(1+ \ltn y^1 \rtn_{p{\rm -var},\Delta_k}) \\
&\leq&  D \Big\{\ltn \theta_{(k-n)}x \rtn_{p{\rm -var},[0,1]}+ \ltn \theta_{(k-n)}x \rtn^2_{p{\rm -var},[0,1]} +\ltn \theta_{(k-n)}x \rtn^{2p}_{p{\rm -var},[0,]}+ \ltn \theta_{(k-n)}x \rtn^{2p+1}_{p{\rm -var},[0,]}\Big \} [1+ \hat{b}(\theta_{(k-n)}x) ] \\
	&=:& \hat{F} (\theta_{(k-n)}x).
	\end{eqnarray*}
All together, $I_k$ is bounded from above by
\begin{eqnarray}\label{Ik}
	I_k \leq D C_g\hat{F}(\theta_{(k-n)}x)\left[1+ \hat{F}^{p-1} (\theta_{(k-n)}x)\right] \exp\left\{\log 2\left[1+ \hat{F}^{p} (\theta_{(k-n)}x)\right]\right\}
	\end{eqnarray}
where the right hand side of \eqref{Ik} is a function of $\theta_{(k-n)}x$.
The ergodic Birkhorff theorem is then applied for \eqref{logz}, so that
	\begin{equation}\label{exponent1}
	\limsup \limits_{n \to \infty} \frac{1}{n}\log \|z_{n}\| 
	\leq  -\lambda + E \log \left\{1+ D C_g \Big[\hat{F} (x) + \hat{F}^p (x)\Big]e^{D\hat{F}^p (x) \log 2} \right\}\quad \text{a.s.} 
	\end{equation}
Apply the inequalities
	\begin{eqnarray*}
		\log(1 + x + y) &\leq& \log (1+x) + \log (1+y),\\
		\log(1+xy) &\leq& \log (1+x) + \log (1+y),\\
		\log (1+ x e^y) &\leq& x + y,\qquad x,y\geq 0,
	\end{eqnarray*}
it follows that
	\begin{eqnarray}\label{exponent2}
\log \Big\{1+ D C_g \Big[\hat{F} (x) + \hat{F}^p (x)\Big]e^{D\hat{F}^p (x) \log 2} \Big\}\leq D \Big(1+ \hat{F}(x)+\hat{F}^p(x) \Big).
	\end{eqnarray}
To estimate $\hat{F}^p(x)$, we apply Cauchy and Young inequalities to obtain, up to a generic constant $D>0$,
	\begin{eqnarray*}
		\hat{F}^p(x)
		&\leq&D\Big[1+ \ltn x \rtn^{2}_{p{\rm -var},[0,1]} +\ltn x \rtn^{4}_{p{\rm -var},[0,1]}+\ltn x \rtn^{4p}_{p{\rm -var},[0,1]}+ \ltn x \rtn^{4p+2}_{p{\rm -var},[0,1]} +\hat{b}^{2}(x)\Big]^p\notag\\
		&\leq&D\Big[1+ \ltn x \rtn^{2p}_{p{\rm -var},[0,1]} +\ltn x \rtn^{4p}_{p{\rm -var},[0,1]}+\ltn x \rtn^{4p^2}_{p{\rm -var},[0,1]}+ \ltn x \rtn^{4p^2+2p}_{p{\rm -var},[0,1]} +\hat{b}^{2p}(x)\Big].
	\end{eqnarray*}
	Hence the right hand side in the last line of \eqref{exponent2} is integrable due to \eqref{bhat} and the integrability of $\ltn x \rtn^{\frac{4p(p+1)}{p-1}}_{p{\rm -var},[-1,1]}$ in \eqref{newGamma} and of $\hat{b}(x)^{2p}$ in Step 2. On the other hand, the expression under the expectation of \eqref{exponent1} tends to zero a.s. as $C_g$ tends to zero. Due to the Lebesgue's dominated convergence theorem, the expectation converges to zero as $C_g$ tends to zero. As a result, there exists $\delta$ small enough such that for $C_g < \delta$ we have $\|z_{n}\| = \|a_1 - a_2\| \to 0$ as $n$ tends to infinity exponentially with the uniform convergence rate in \eqref{exponent1}. This proves $a_1 \equiv a_2$ a.s. and $\mathcal{A}$ is a singleton. 
	
{\bf Step 4.} Let $y^1_t= y(t,x,a(x))$, $y^2_t=y(t,x,y_0(x))$ be the solutions starting from $a(x)$, $y_0(x)$ respectively at $t=0$.  Since $\mathcal{A}$ is invariant, $y^1_t= a(\theta_tx)$. By repeating the arguments in Step 3 ($x^*$ is replaced by $x$), we conclude that $ \mathcal{A} (x)=\{a(x)\}$ is also a forward attractor. 
\end{proof}
\begin{corollary}\label{continuityattractor}
Denote by $\mu^*$ the unique equilibrium of the deterministic system \eqref{mu.equ}. Assume that $\|g\|_\infty \leq C_g $ and the assumptions of Theorem \ref{gbounded} hold so that there exists a singleton attractor $\mathcal{A}(x) = \{a(x)\}$ for $C_g < \delta$ small enough. Then 
\begin{equation}\label{attractorcont}
\lim \limits_{C_g \to 0} \|a(x) - \mu^*\| =0 \quad \text{a.s., and}\quad \lim \limits_{C_g \to 0} \E \|a(x) - \mu^*\|^{2p} =0.   
\end{equation}
\end{corollary}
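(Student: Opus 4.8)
The plan is to control the distance $\rho(x) := \|a(x) - \mu^*\|$ by a single random quantity carrying an explicit factor $C_g$, so that both limits in \eqref{attractorcont} follow at once by letting $C_g \to 0$. The starting point is a one--step comparison between the stochastic flow and the deterministic flow \eqref{mu.equ} over the horizon $r>0$ fixed in Step 1 of the proof of Theorem \ref{gbounded}. Splitting $y_r - \mu^* = (\mu_r - \mu^*) + h_r$ with $h = y - \mu$ as in Corollary \ref{h}, I would first record the deterministic stability estimate: since $\lambda_A > C_AC_f$, applying the continuous Gronwall lemma to $v_t := \mu_t - \mu^*$ (which solves $\dot v_t = Av_t + [f(\mu_t)-f(\mu^*)]$) gives $\|\mu_r - \mu^*\| \leq C_A e^{-\lambda r}\|y_0 - \mu^*\|$ with $\lambda = \lambda_A - L_f$. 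For the fluctuation $h_r$ I would invoke \eqref{hinf}, and crucially use the hypothesis $\|g\|_\infty \leq C_g$ together with the fact that $C_g$ is the Lipschitz constant of both $g$ and $Dg$: all coefficients entering the bound of Corollary \ref{h} are then $O(C_g)$, so $\|h_r\| \leq C_g\,\Xi_r(x)\big(1 + \|y_0\|^\beta\big)$ with $\beta = 1/p$ and $\Xi_r(x)$ a polynomial in $\ltn x\rtn_{p{\rm -var},[0,r]}$ whose degree and coefficients stay bounded uniformly for $C_g\in(0,\delta]$.

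Combining the two bounds, using $\|y_0\|^\beta \leq \rho(x)^\beta + \|\mu^*\|^\beta$ (valid since $\beta \in (0,1)$), and the invariance identity $a(\theta_r x) = y(r,x,a(x))$ for the singleton attractor, I obtain the self-referential recursion
\[
\rho(\theta_r x) \leq q\,\rho(x) + C_g\,\tilde\Xi_r(x)\big(1 + \rho(x)^\beta\big), \qquad q := C_A e^{-\lambda r} < 1,
\]
where $\tilde\Xi_r$ absorbs the constant $1 + \|\mu^*\|^\beta$. The apparent circularity through $\rho^\beta$ is broken by the a priori bound $\rho(x) \leq \hat b(x) + \|\mu^*\|$ coming from the absorbing set \eqref{bhat}; since the dissipativity defect $\xi_r$ constructed in Steps 1--2 of Theorem \ref{gbounded} scales with powers of $C_g$, this bound is uniform over $C_g \in (0,\delta]$. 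Iterating the recursion in the pullback direction ($x \mapsto \theta_{-r}x, \theta_{-2r}x, \dots$) and letting the number of steps tend to infinity, the homogeneous term $q^n\rho(\theta_{-nr}x)$ vanishes a.s. because $\rho(\theta_{-nr}x)\le \hat b(\theta_{-nr}x)+\|\mu^*\|$ is tempered while $q^n$ decays geometrically, leaving
\[
\rho(x) \leq C_g\, S(x), \qquad S(x) := \sum_{i=1}^\infty q^{i-1}\,\tilde\Xi_r(\theta_{-ir}x)\big(1 + \rho(\theta_{-ir}x)^\beta\big).
\]

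It then remains to show that $S(x)$ is finite a.s. and lies in $L^{2p}$, uniformly in $C_g \in (0,\delta]$. Almost sure finiteness follows from the geometric weight $q^{i-1}$ together with the sub-exponential growth of $\tilde\Xi_r(\theta_{-ir}x)$ and of $\rho(\theta_{-ir}x)$ (Birkhoff theorem and temperedness of $\hat b$). For the $L^{2p}$ bound I would apply Minkowski's inequality to the series, use the $\theta$--invariance of $\bP$ to make every summand equidistributed, and estimate a single summand by H\"older's inequality: $\tilde\Xi_r$ sits in a high $L^q$ space thanks to the moment hypothesis \eqref{newGamma}, while $\rho^\beta \le (\hat b + \|\mu^*\|)^\beta$ is controlled by the $L^{2p}$ bound on $\hat b$ established in Step 2 of Theorem \ref{gbounded}. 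This yields $\|S\|_{L^{2p}} \le C/(1-q) < \infty$ with $C$ independent of $C_g$, whence $\rho(x) \le C_g S(x) \to 0$ a.s. and $\E\rho(x)^{2p} \le C_g^{2p}\,\E S(x)^{2p} \to 0$ as $C_g \to 0$, which is exactly \eqref{attractorcont}. The main obstacle I anticipate is precisely the bookkeeping in this last step: one must check that the polynomial degree of $\tilde\Xi_r$ in $\ltn x\rtn_{p{\rm -var},[0,r]}$, combined with the degree of $\hat b$, stays inside the integrability window granted by \eqref{newGamma} --- which is exactly why that high-moment assumption is imposed.
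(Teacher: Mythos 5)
Your argument is correct, but it is not the route the paper takes. The paper's proof picks the particular solution started at the deterministic equilibrium, $h_t := y_t(x,\mu^*)-\mu^*$, observes that $h$ solves a Young equation whose drift $\bar f(h)=f(h+\mu^*)-f(\mu^*)$ vanishes at the origin and whose diffusion coefficient $\bar g(h)=g(h+\mu^*)$ has sup-norm and Lipschitz constant bounded by $C_g$, and then reruns the Theorem \ref{attractor} machinery (variation of constants on unit intervals, Proposition \ref{YDEg}, discrete Gronwall) for this shifted equation; since $h_0=0$ the homogeneous term drops out and one gets directly $\|y_n(\theta_{-n}x,\mu^*)-\mu^*\|\le D C_g b(x)$ with $b$ finite a.s., whence $\|a(x)-\mu^*\|\le DC_g b(x)$ by pullback convergence to the singleton, and the moment statement follows by dominated convergence from $\E\|a(\cdot)\|^{2p}<\infty$. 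You instead exploit the invariance $a(\theta_r x)=y(r,x,a(x))$ to set up a fixed-horizon contraction recursion $\rho(\theta_r x)\le q\,\rho(x)+C_g\tilde\Xi_r(x)(1+\rho(x)^\beta)$ built from Corollary \ref{h}, and iterate it in the pullback direction. The price is exactly the bookkeeping you flag: the sublinear term $\rho^\beta$ must be closed with the a priori absorbing-set bound (uniform in $C_g\in(0,\delta]$, which does hold since all inhomogeneities in Steps 1--2 of Theorem \ref{gbounded} carry factors of $\|g\|_\infty\vee C_g$), and the H\"older exponents must be checked against \eqref{newGamma} --- they do work out, since $\tilde\Xi_r$ has degree $p+1$ in $\ltn x\rtn_{p{\rm -var},[0,r]}$ and the required moment $2p^2(p+1)/(p-1)$ is below $4p(p+1)/(p-1)$ because $p<2$. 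The payoff is a genuinely quantitative conclusion, $\E\|a(x)-\mu^*\|^{2p}\le C_g^{2p}\,\E S^{2p}$ with $\E S^{2p}$ bounded uniformly in $C_g$, i.e.\ an explicit $O(C_g)$ rate in $L^{2p}$, which the paper's dominated-convergence step does not deliver; the paper's choice of initial condition $\mu^*$ is the simpler device, making the recursion linear and letting the estimate fall out of computations already performed for Theorem \ref{attractor}.
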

\begin{proof}
Take a solution $y_t(x,\mu^*)$ and consider the difference $h_t=h_t(x,\mu^*) := y_t(x,\mu^*) - \mu^*$ for $t\geq 0$, then $h_0 =0$ and $h$ satisfies the equation
\[
dh_t = [f(h_s+\mu^*)-f(\mu^*)] dt + g(h_t+\mu^*) d x_t = \bar{f}(h_t) dt + \bar{g}(h_t) d x_t.
\] 
Similar to \eqref{esty2}, it is easy to prove that
\begin{equation}\label{esth}
	\| h \|_{p{\rm -var},[a,b]} \leq \Big[\|h_a\| + 2 C_g (1+  \ltn x\rtn_{p{\rm-var},[a,b]})N_{[a,b]}(x)\Big] e^{2 L (b-a)}N^{\frac{p-1}{p}}_{[a,b]}(x).
	\end{equation}
On the other hand, $h$ also satisfies a similar equation to \eqref{variation}, which is  
\[
h_t = \int_0^t\Phi (t-s)\bar{f}(h_s) ds + \int_0^t\Phi (t-s)\bar{g}(h_s) d x_s,\quad \forall t\geq 0.
\] 
where a similar computation to \eqref{int2} shows that
\[
		\Big\|\int_a^{b}\Phi (c-s)\bar{g}(h_s) d  x_s\Big\| \leq KC_A \Big[1+|A|(b-a)\Big]\ltn  x\rtn_{p{\rm -var},[a,b]}e^{-\lambda_A(c-b)} \Big[C_g \| h\|_{p{\rm -var},[a,b]}+C_g\Big].
\]
As a result, one can prove a similar estimate to \eqref{yt} that
\begin{eqnarray}\label{ht}
	\|h_n\|e^{\lambda n}
	\leq C_A \|h_0\| + \sum_{k=0}^{n-1} e^{\lambda_A}KC_A(1+|A|)\ltn  x\rtn_{p{\rm -var},\Delta_k}e^{\lambda k} \Big[C_g \| h\|_{p{\rm -var},\Delta_k}+C_g\Big].
	\end{eqnarray}
Using \eqref{ht}, \eqref{esth} and following the proof of Theorem \ref{attractor} step by step, we obtain	
\begin{eqnarray}\label{h}
	\|h_n(x,y_0)\|&\leq &C_A\|h_0\|e^{-\lambda n}  \prod_{k=0}^{n-1} \Big[1+M_1 C_g G(x,\Delta_k)\Big]\notag\\ &&+DC_g  \sum_{k=0}^{n-1} e^{-\lambda (n-k)} H(x,\Delta_k) \prod_{j=k+1}^{n-1} \Big[1+M_1C_g G(x,\Delta_j)\Big].
\end{eqnarray}	
for some generic constant $D$. Replacing $x$ by $\theta_{-n}x$ and letting $n$ tend to infinity, we follow from \eqref{h} that 
\[
\lim \limits_{n \to \infty} \|h_n(\theta_{-n}x,\mu^*)\| = \lim \limits_{n \to \infty} \|y_n(\theta_{-n}x,\mu^*) - \mu^*\| \leq D C_g b(x)
\]
for some random variable $b(\cdot)$ that is finite almost sure. On the other hand, Theorem \ref{gbounded} shows that $y_n(\theta_{-n}x,\mu^*) \to a(x)$ a.s. as $n \to \infty$. Hence $\|a(x) - \mu^*\|  \leq DC_g b(x)$ a.s., which shows that $a(x)$ converges to $\mu^*$ a.s. as $C_g \to 0$. 

 Next, as shown in Step 1 of the proof of Theorem \ref{gbounded}, $\E \|a(\cdot)\|^{2p} < \infty$, thus $\E \|a(\cdot) - \mu^*\|^{2p} < 2^{2p-1} \Big(\E \|a(\cdot)\|^{2p} + (\mu^*)^{2p} \Big) <\infty$.   Due to the Lebesgue's dominated convergence theorem, $a(x)$ also converges to $\mu^*$ in $2p$ - moment as $C_g$ tends to zero, which proves \eqref{attractorcont}.
\end{proof}

\begin{example}	[Inverted pendulum with stochastic excitation]\label{ex1}
	Following \cite{shaikhet}, \cite{floris} and the references therein, we study the dynamics of an inverted pendulum with a point mass at the top and a rotational spring at the base. Assume that the bar is massless and the base is subjected to a vertical acceleration $A_t$, which is supposed to be a stochastic process $\sigma_1 \dot{\xi}^1_t$ for a parameter $\sigma_1 \geq 0$. Then the dynamics of the angle of rotation $\vartheta$ follows a stochastic differential equations 
	\begin{equation}\label{pendulum1}
	m\ddot{\vartheta} + 2b \dot{\vartheta}  - \frac{m}{L}[g + \sigma_1 \dot{\xi}^1_t]\sin \vartheta + k \vartheta = 0,
	\end{equation}
where $m$ is the tip mass, $b$ is the viscous damping coefficient, $L$ is the bar's length, $g$ is the gravitational constant and $k$ is the excitation intensity. In practice, the coefficients $b,k$ can also be of stochastic form $b + \sigma_2 \dot{\xi}^2_t$, $k + \sigma_3 \dot{\xi}^3_t$, and there might be another small external stochastic excitation $\sigma_4 \dot{\xi}^4_t$ in the right hand side of equation \eqref{pendulum1}, for $\sigma_2, \sigma_3, \sigma_4 \geq 0$. Hence the generalized form of \eqref{pendulum1} is
	\begin{equation}\label{pendulum2}
	\ddot{\vartheta} + \frac{2(b+\sigma_2 \dot{\xi}^2_t)}{m} \dot{\vartheta}  - \frac{1}{L}[g + \sigma_1 \dot{\xi}^1_t]\sin \vartheta + \frac{k+ \sigma_3 \dot{\xi}^3_t}{m} \vartheta = \frac{\sigma_4}{m} \dot{\xi}^4_t.
	\end{equation}
 To interpret equations \eqref{pendulum1} and \eqref{pendulum2}, we assume further that $\dot{\xi}^i_t = \frac{d z^i_t}{dt}, i = 1,\ldots,4$, where $Z=(z^1,z^2,z^3,z^4) \in \R^4$ is a stationary stochastic process statisfying  (${\textbf H}_3$) and condition \eqref{newGamma}. For instance, we can assume that $z^i$ are mutually independent fractional Brownian motions $B^{H_i}$ where $H_i > \frac{1}{2}$ for $i = 1\ldots 4$. Then equation \eqref{pendulum2} is understood as a two-dimensional controlled differential equation
 \begin{eqnarray}\label{pendulum3}
   d \vartheta_t &=& \bar{\vartheta}_t dt \notag \\ 
   d \bar{\vartheta}_t &=&\Big[ - \frac{k}{m} \vartheta_t -  \frac{2b}{m} \bar{\vartheta}_t + \frac{g}{L} \sin \vartheta_t \Big]dt + \frac{\sigma_1 }{L} \sin \vartheta_t dz^1_t - \frac{2\sigma_2}{m}\bar{\vartheta}_t dz^2_t - \frac{\sigma_3}{m} \vartheta dz^3_t+  \frac{\sigma_4}{m} dz^4_t,
 \end{eqnarray}
 which has the form \eqref{stochYDE} with $y = (\vartheta, \bar{\vartheta})^{\rm T}$ and
\begin{equation}\label{pendulum4}     	
A = \left(\begin{matrix}
	0 & 1\\ - \frac{k}{m}& -\frac{2b}{m} \end{matrix} \right),\quad  f(y) = \left(\begin{matrix}
	0\\   \frac{g}{L} \sin \vartheta_t
	\end{matrix} \right), \quad g(y) =  \left(\begin{matrix}
	 0 & 0 & 0&0\\ \frac{\sigma_1 }{L} \sin \vartheta_t & -\frac{2\sigma_2}{m}\bar{\vartheta}  & - \frac{\sigma_3}{m} \vartheta & \frac{\sigma_4}{m} \end{matrix} \right).
	\end{equation}
It is easy to check that matrix $A$ has two eigenvalues of negative real parts,  $f$ is globally Lipschitz continuous with $C_f = \frac{g}{L}$, and $g \in C^2 (\R^2,\R^{2\times 4})$ with $C_g = \frac{\sigma_1}{L} \vee \frac{2\sigma_2}{m} \vee \frac{\sigma_3}{m}$. Hence, provided that $\lambda = \lambda_A - C_A C_f >0$, we consider three cases.
\begin{itemize}
	\item If $\sigma_4 = 0$, then due to Corollary \ref{onepoint} the singleton attractor is the trivial solution if we choose $\sigma_1, \sigma_2, \sigma_3$ small enough such that condition \eqref{criterion} is satisfied. 
	\item If $\sigma_2 = \sigma_3 = 0$, then only two noises $z^1,z^4$ are in effect and $g$ has a reduced form which is in $C^2_b(\R^2,\R^{2\times 2})$. We then apply Theorem \ref{gbounded} to conclude that, by choosing $\sigma_1$ small enough, all solutions of \eqref{pendulum3} converge a.s. in the pullback sense to the singleton attractor of the random dynamical system generated by \eqref{pendulum3}. Using Corollary \ref{continuityattractor}, we can choose $\sigma_1$ and $\sigma_4$ small enough so that this singleton attractor is very close to the trivial solution of the unperturbed system. 
	\item If $\sigma_1 = 0$, then the three noises $\sigma_2, \sigma_3, \sigma_4$ are in effect and $g$ has a reduced form of linear type $C y + g(0)$ where $C \in \R^{2 \times 2 \times 3}$. We then apply Theorem \ref{linear} to conclude that, by choosing $\sigma_2, \sigma_3$ small enough such that condition \eqref{criterion} is satisfied, all solutions of \eqref{pendulum3} converge a.s. in the pullback sense to the singleton attractor of the random dynamical system generated by \eqref{pendulum3}. 	
\end{itemize}
\end{example}
\section*{Appendix}
\begin{lemma}[Discrete Gronwall Lemma]\label{gronwall}
	Let $a$ be a non negative constant and $u_n, \alpha_n,\beta_n$ be  nonnegative sequences satisfying 
	\[
	u_n\leq a + \sum_{k=0}^{n-1} \alpha_ku_k + \sum_{k=0}^{n-1} \beta_k,\;\; \forall n \geq 1
	\]
	\begin{equation}\label{estu}
\text{then}\qquad	u_n\leq \max\{a,u_0\}\prod_{k=0}^{n-1} (1+\alpha_k) + \sum_{k=0}^{n-1}\beta_k\prod_{j=k+1}^{n-1}(1+\alpha_j), \quad \forall n \geq 1.
	\end{equation}
\end{lemma}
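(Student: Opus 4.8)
The plan is to introduce the partial sums appearing on the right-hand side of the hypothesis, turn the given inequality into a one-step affine recursion, and then unwind that recursion by induction. Concretely, I would set $S_n := a + \sum_{k=0}^{n-1}\alpha_k u_k + \sum_{k=0}^{n-1}\beta_k$ for $n \geq 1$, so that the assumption reads simply $u_n \leq S_n$. The first observation is that $S_{n+1} - S_n = \alpha_n u_n + \beta_n$, and since $u_n \leq S_n$ with all coefficients non-negative, this gives the scalar recursion $S_{n+1} \leq (1+\alpha_n)S_n + \beta_n$ for every $n \geq 1$. Solving such an affine recursion with non-negative data is entirely standard.

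Next I would dispose of the initial step, which is the only place where the maximum $\max\{a,u_0\}$ is forced to appear. Since $S_1 = a + \alpha_0 u_0 + \beta_0$ and both $a \leq \max\{a,u_0\}$ and $u_0 \leq \max\{a,u_0\}$, one obtains $S_1 \leq (1+\alpha_0)\max\{a,u_0\} + \beta_0$. Reading the claimed bound \eqref{estu} at $n=1$, where the product $\prod_{j=1}^{0}(1+\alpha_j)$ is empty and hence equal to $1$, shows that $u_1 \leq S_1$ already matches it.

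Then I would propagate the estimate by induction on $n$. Assuming $S_n \leq \max\{a,u_0\}\prod_{k=0}^{n-1}(1+\alpha_k) + \sum_{k=0}^{n-1}\beta_k\prod_{j=k+1}^{n-1}(1+\alpha_j)$, I multiply through by $(1+\alpha_n)$ and add $\beta_n$, using the recursion $S_{n+1}\leq (1+\alpha_n)S_n + \beta_n$. The factor $(1+\alpha_n)$ extends each product by one factor, turning $\prod_{k=0}^{n-1}$ into $\prod_{k=0}^{n}$ in the leading term and $\prod_{j=k+1}^{n-1}$ into $\prod_{j=k+1}^{n}$ in the sum; the freshly added $\beta_n$, written as $\beta_n\prod_{j=n+1}^{n}(1+\alpha_j)$ with an empty product, becomes precisely the $k=n$ summand. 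This reproduces the claimed bound at level $n+1$, and since $u_{n+1}\leq S_{n+1}$ the assertion \eqref{estu} follows for all $n\geq 1$.

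I do not expect any analytic difficulty; the entire argument is elementary manipulation of finite sums and products of non-negative quantities. The only real care is bookkeeping of indices: one must check that the $k=0$ term $\beta_0\prod_{j=1}^{n-1}(1+\alpha_j)$ generated at the base step fits as the first summand of $\sum_{k=0}^{n-1}$, and that the maximum carries the full product $\prod_{k=0}^{n-1}(1+\alpha_k)$ rather than $\prod_{k=1}^{n-1}(1+\alpha_k)$. Getting the empty-product conventions right at both ends is therefore the main, and rather mild, obstacle.
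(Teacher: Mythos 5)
Your proposal is correct and follows essentially the same route as the paper: define the partial sum $S_n$, observe $S_{n+1}\leq(1+\alpha_n)S_n+\beta_n$ using $u_n\leq S_n$, handle $n=1$ with $\max\{a,u_0\}$, and induct. The index bookkeeping and empty-product conventions you flag are handled exactly as in the paper's argument.
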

\begin{proof}
	Put 
	\begin{eqnarray*}
		S_n:= a + \displaystyle\sum_{k=0}^{n-1} \alpha_ku_k + \sum_{k=0}^{n-1} \beta_k, \quad T_n: = \max\{a,u_0\} \displaystyle\prod_{k=0}^{n-1} (1+\alpha_k) + \sum_{k=0}^{n-1}\beta_k\prod_{j=k+1}^{n-1}(1+\alpha_j).
	\end{eqnarray*}
	We will prove by induction that $S_n\leq T_n$ for all $n\geq 1$. Namely, the statement holds for $n=1$ since $S_1= a+\alpha_0 u_0+\beta_0 \leq \max\{a,u_0\}(1+\alpha_0)  + \beta_0 =T_1$.
	
	We assume that $S_n\leq T_n$ for $n\geq 1$, then due to the fact that $u_n \leq S_n$ we obtain
	\allowdisplaybreaks
	\begin{eqnarray*}
		S_{n+1}	&=&  a + \displaystyle\sum_{k=0}^{n-1} \alpha_ku_k + \sum_{k=0}^{n-1} \beta_k + \alpha_n u_n + \beta_n	= S_n + \alpha_n u_n + \beta_n\notag\\
		&\leq & S_n + \alpha_n S_n + \beta_n \leq  T_n (1+ \alpha_n ) + \beta_n\notag\\
		&\leq &  \left[\max\{a,u_0\} \displaystyle\prod_{k=0}^{n-1} (1+\alpha_k) + \sum_{k=0}^{n-1}\beta_k\prod_{j=k+1}^{n-1}(1+\alpha_j) \right] (1+ \alpha_n ) +\beta_n\\
		&\leq &  \max\{a,u_0\} \displaystyle\prod_{k=0}^{n} (1+\alpha_k) + \sum_{k=0}^{n}\beta_k\prod_{j=k+1}^{n-1}(1+\alpha_j) = T_{n+1}.
	\end{eqnarray*}
	Since $u_n\leq S_n$, \eqref{estu} holds.
\end{proof}

\section*{Acknowledgments}
This work was supported by the Max Planck Institute for Mathematics in the Science (MIS-Leipzig). P.T. Hong would like to thank the IMU Breakout Graduate Fellowship Program for the financial support.

\end{document}